\newtheorem{thm}{Theorem}[section]
\newtheorem{Lem}{Lemma}[section]
\newtheorem{defn}{Definition}[section]
\newtheorem{exam}{Example}[section]
\newtheorem{rem}{Remark}[section]
\begin{document}
\begin{center}
\LARGE {\bf {\boldmath Ry\={u}\={o} } Nim: A Variant of the classical game of Wythoff Nim}
\end{center}

\begin{center}
\large Ryohei Miyadera \footnote{Kwansei Gakuin High School}, Yuki Tokuni \footnote{Kwansei Gakuin University},Yushi Nakaya \footnote{Tohoku University}, Masanori Fukui \footnote{Hyogo University of Teacher Education}, Tomoaki Abuku \footnote{University of Tsukuba}, Koki Suetsugu \footnote{Kyoto University}
\end{center}

\begin{abstract}

The authors introduce the impartial game of the generalized Ry\={u}\={o} Nim, a variant of the classical game of Wythoff Nim. In the latter game, two players take turns in moving a single queen on a large chessboard, attempting to be the first to put her in the upper left corner, position $(0,0)$. 
Instead of the queen used in Wythoff Nim, we use the generalized Ry\={u}\={o} for a given natural number $p$. The generalized Ry\={u}\={o} for $p$ can be moved 
horizontally and vertically, as far as one wants. It also can be moved diagonally from $(x,y)$ to $(x-s,y-t)$, where $s,t$ are non-negative integers such that $1 \leq s \leq x, 1 \leq t \leq y \textit{ and } s+t \leq p-1$. When $p$ is $3$, the generalized Ry\={u}\={o} for $p$ is a Ry\={u}\={o} , i.e., a promoted hisha piece of Japanese chess. A Ry\={u}\={o} combines the power of the rook and the king in Western chess. 
The generalized Ry\={u}\={o} Nim for $p$ is mathematically the same as the Nim with two piles of counters in which a player may take any number from either heap, and a player may also simultaneously remove $s$ counters from either of the piles and $t$ counters from the other, where $s+t \leq p-1$ and $p$ is a given natural number.
The Grundy number of the generalized Ry\={u}\={o} Nim for $p$ is given by 
$\bmod(x+y,p) + p(\lfloor \frac{x}{p} \rfloor	\oplus \lfloor \frac{y}{p}\rfloor)$.
The authors also study the generalized Ry\={u}\={o} Nim for $p$ with a pass move. The generalized Ry\={u}\={o} Nim for $p$ without a pass move has simple
formulas for Grundy numbers. This is not the case after the introduction of a pass move, but it still has simple formulas for the previous player's positions. We also study the Ry\={u}\={o} Nim that restricted the diagonal and side movement. Moreover, we extended the Ry\={u}\={o} Nim dimension to the $n$-dimension.
\end{abstract}

\noindent
\textbf{Keyword}: Nim, Wythoff Nim, Grundy Number, a Pass Move, n-Dimention

\section{Generalized {\boldmath Ry\={u}\={o} } (dragon king) Nim}\label{generalnim}
Let $\mathbb{Z}_{\geq 0}$ be the set of non-negative integers and $N$ be the set of natural numbers. Let $p$ be a fixed natural number. For any $x \in \mathbb{Z}_{\geq 0}$, let $\bmod(x,p)$ denote the remainder obtained when $x $ is divided by $p$. 
Here, we introduce the impartial game of the generalized Ry\={u}\={o} Nim for $p$, a variant of the classical game of Wythoff Nim that was studied in \cite{wythoffpaper}. 	
Instead of the queen used in Wythoff Nim, we use the generalized Ry\={u}\={o} for $p$. When $p$ is $3$, the generalized Ry\={u}\={o} for $p$ is a Ry\={u}\={o} 	(dragon king ) of Japanese chess. A Ry\={u}\={o} combines the power of the rook and the king in Western chess.
Let us break with chess traditions here and denote fields on the chessboard by pairs of numbers. The field in the upper left corner will be denoted by $(0,0)$, and the other ones will be denoted according to a Cartesian scheme: field $(x, y)$ denotes $x$ fields to the right followed by $y$ fields down (see Figure \ref{narihishacoordinate}).

\begin{defn}\label{geryuohnim}	
We define the generalized Ry\={u}\={o} Nim for $p$.
The generalized Ry\={u}\={o} for $p$ is placed on a chessboard of unbounded size, and two players move it in turns.
The generalized Ry\={u}\={o} for $p$ is to be moved to the left or upwards, vertically, as far as one wants. It can also be moved diagonally from $(x,y)$ to
$(x-s,y-t)$, where $s,t$ are non-negative integers such that $1 \leq s \leq x, 1 \leq t \leq y \textit{ and } s+t \leq p-1$. The generalized Ry\={u}\={o} for $p$ has to be moved by at least one field in each move.
The object of the game is to move the generalized Ry\={u}\={o} for $p$ to the "winning field" $(0,0)$; whoever moves the generalized Ry\={u}\={o} for $p$ to this field wins the game.
\end{defn}

\begin{defn}\label{gryuohnim2}	
Suppose that there are two piles of counters. Players take turns removing counters from one or both piles. A player may take any number from either heap, and a player may also simultaneously remove $n$ counters from either of the piles and $m$ from the other, where $n+m \leq p-1$ and $p$ is a given natural number.
\end{defn}

\begin{rem}
Moving to the left or upwards, or to the upper left on the chessboard, is mathematically the same as 
taking counters from either heap or some counters from both, and hence, 
the game defined in Definition \ref{geryuohnim}	 is mathematically the same as that defined in Definition \ref{gryuohnim2}.
\end{rem}

\begin{figure}[!htb]
\begin{center}
\includegraphics[height=0.3\columnwidth,bb=0 0 273 265]{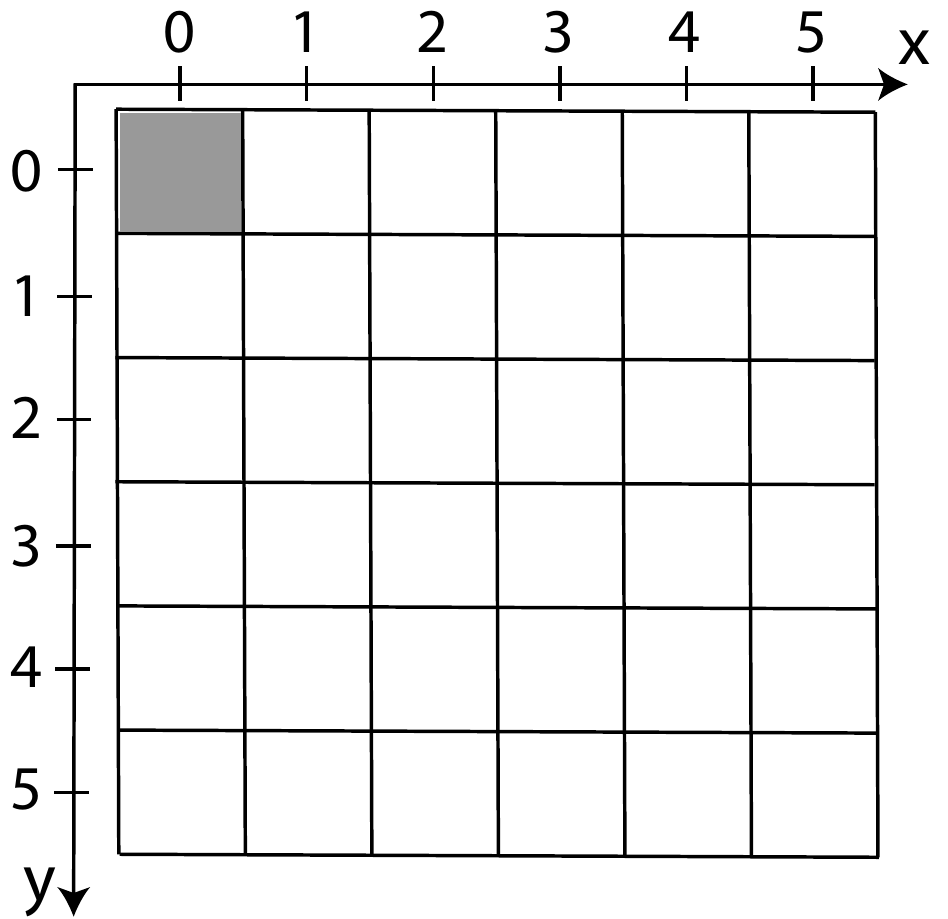}
\label{narihishacoordinate}
\caption{Definition of coordinates}
\end{center}
\end{figure}

In this article, we only treat impartial games (see \cite{lesson} or \cite{combysiegel} for a background on impartial games).
For an impartial games without draws, there will only be two outcome classes.

\begin{defn}\label{NPpositions}
$(a)$ $\mathcal{N}$-positions, from which the next player can force a win, as long as he plays correctly at every stage.\\
$(b)$ $\mathcal{P}$-positions, from which the previous player $($the player who will play after the next player$)$ can force a win, as long as he plays correctly at every stage.
\end{defn}

We use the theory of Grundy numbers to study impartial games without draws. To define the Grundy numbers, we need some definitions.

\begin{defn}\label{moveceil}
For any position $\mathbf{p}$ of a game $G$, there is a set of positions that can be reached by making precisely one move in $G$, which we will denote by {\rm move}$(\mathbf{p})$. 
\end{defn}

The ${\rm move}$ of the generalized Ry\={u}\={o} for $p$ is defined in Definition \ref{defofmoveforgeneralg}.	

\begin{defn}\label{defofmoveforgeneralg} 
We define ${\rm move}((x,y))$ of the generalized Ry\={u}\={o} for $p$.
For $x,y \in \mathbb{Z}_{\geq 0}$ such that $x+y \geq 1$, let
\begin{align}
M_{g1}= \{(u,y):u<x\} \label{geryuuoumoveex1g} \\
M_{g2}= \{(x,v):v<y\}, \label{geryuuoumoveex2g}
\end{align}
where $u,v \in \mathbb{Z}_{\geq 0}$.
For $x,y \in \mathbb{Z}_{\geq 0}$ such that $1 \leq x,y$, let			
\begin{align}
M_{g3}= \{(x-s,y-t):1 \leq s \leq x, 1 \leq t \leq y \ and \ s+t \leq p-1\},\label{geryuuoumoveex3g}
\end{align}
where $s,t \in \mathbb{Z}_{\geq 0}$.\\
We define ${\rm move}((x,y))=M_{g1} \cup M_{g2} \cup M_{g3}.$
\end{defn}
\begin{rem}
The sets (\ref{geryuuoumoveex1g}), (\ref{geryuuoumoveex2g}), and (\ref{geryuuoumoveex3g}) denote horizontal, vertical, and 
upper left moves.
If $x=0$ or $y=0$, then $M_{g1}$ or $M_{g2}$ is empty, respectively. When $x=0$ or $y=0$, then $M_{g3}$ is empty.\\
$M_{g1}, M_{g2}$, and $M_{g3}$ depend on $(x,y)$; hence, mathematically, it is more precise to write $M_{g1}(x,y), M_{g2}(x,y)$, and $M_{g3}(x,y)$.
If we write $M_{g1}(x,y)$ instead of $M_{g1}$, the relations and equations appear more complicated. Therefore,
we omit $(x,y)$ for convenience. 
\end{rem}

\begin{exam}
Figure \ref{geryuuoumovep6}, Figure \ref{geryuuoumovep4} and Figure \ref{geryuuoumovep8} show the moves of the generalized Ry\={u}\={o} for 
$p = 3$, $p = 4$ and $p=8$, respectively. Figure \ref{geryuuoumovepg} presents the moves of the generalized Ry\={u}\={o} for a natural number 
$p$.
In these figures, the horizontal move (the set in (\ref{geryuuoumoveex1g}) ) and the vertical move (the set in (\ref{geryuuoumoveex2g}))
are denoted by dotted lines, and the upper left move (the set in (\ref{geryuuoumoveex3g})) is denoted by a set of small circles.
\begin{figure}[!htb]
\begin{minipage}[!htb]{0.45\columnwidth}
\begin{center}
\includegraphics[height=0.75\columnwidth,bb=0 0 525 525]{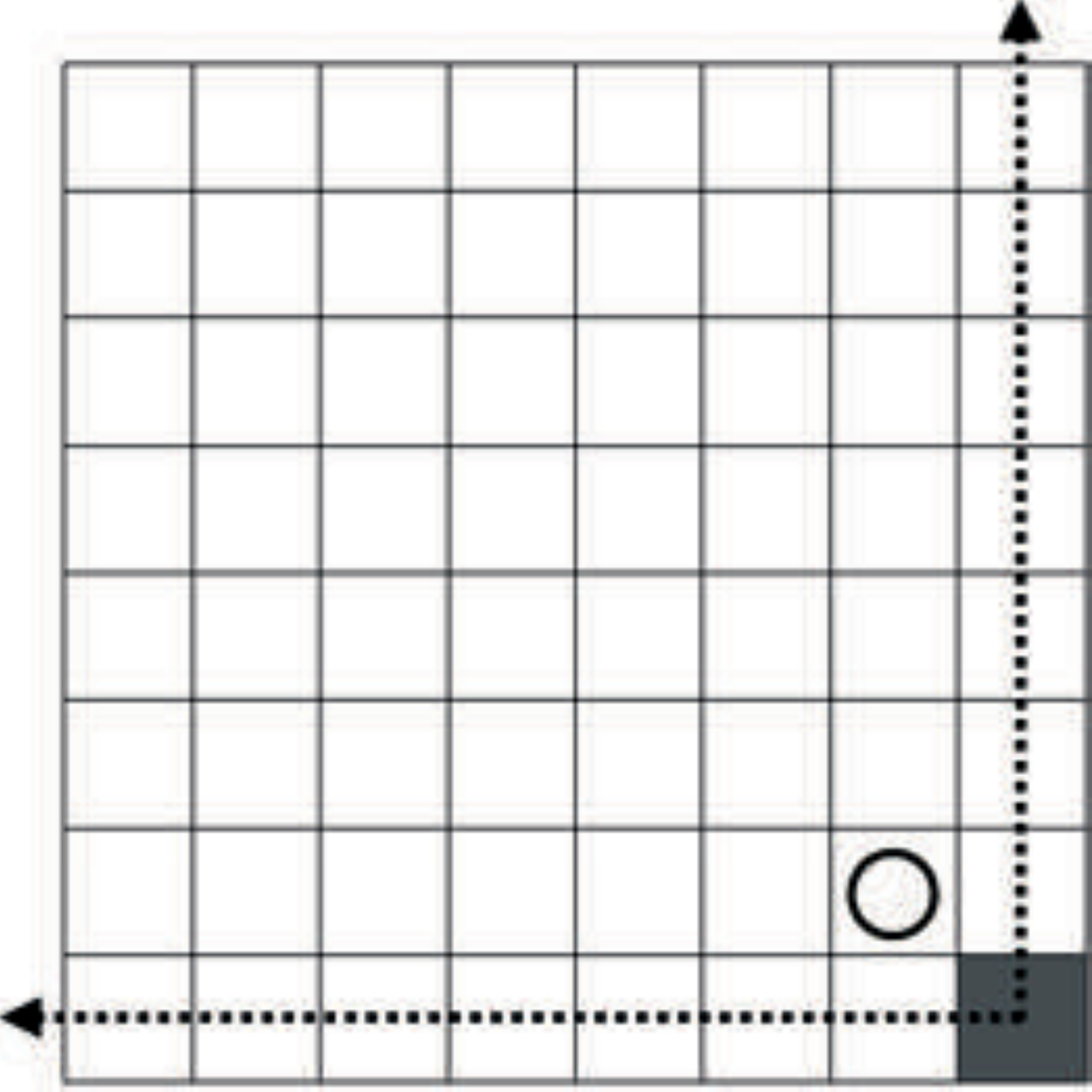}
\caption{Moves of the Ry\={u}\={o} }
\label{geryuuoumovep6}
\end{center}
\end{minipage}
\begin{minipage}[!htb]{0.45\columnwidth}
\begin{center}
\includegraphics[height=0.75\columnwidth, bb=0 0 525 525]{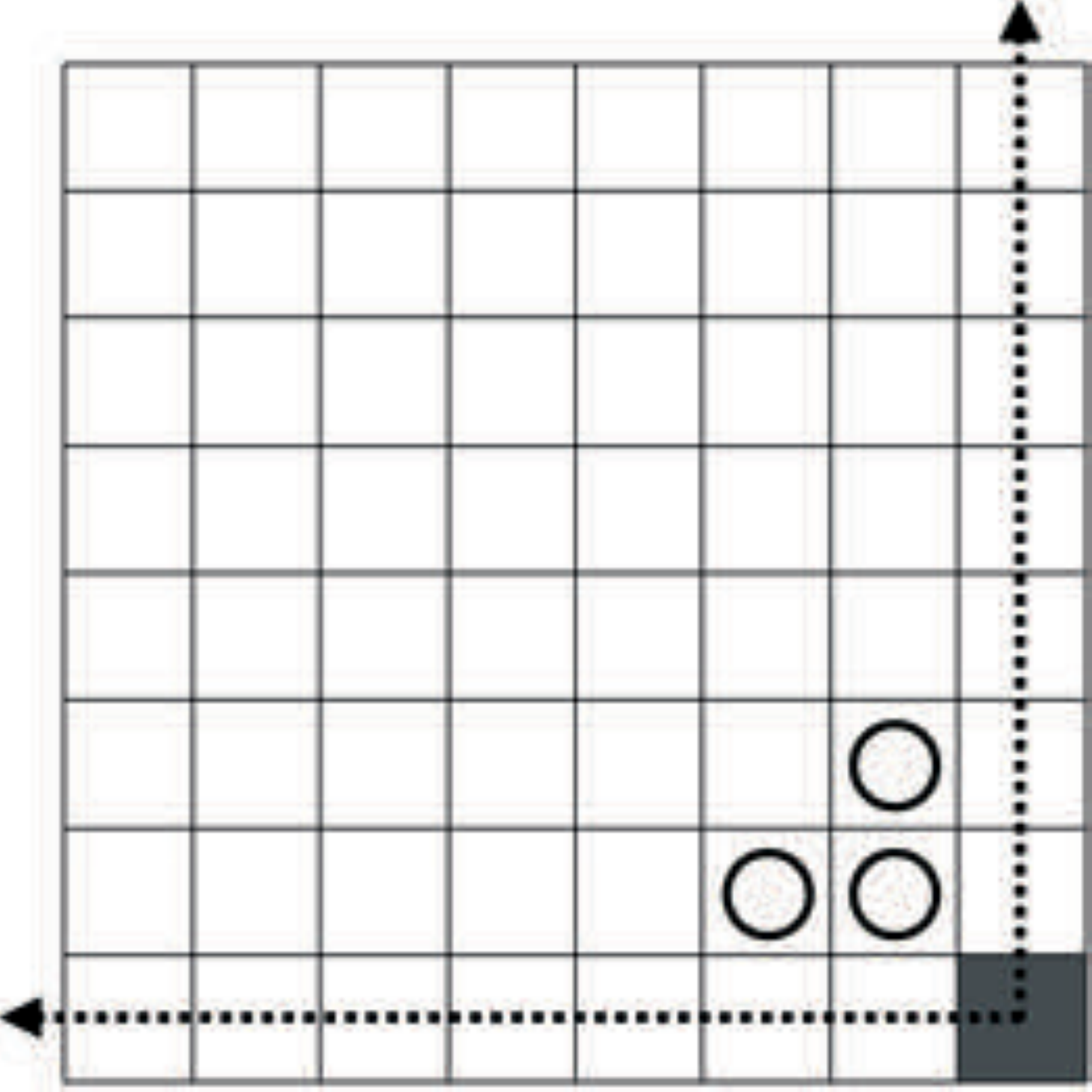}
\caption{Moves of a general Ry\={u}\={o} for $p=4$} 
\label{geryuuoumovep4}
\end{center}
\end{minipage}
\end{figure}
\begin{figure}[!htb]
\begin{minipage}[!htb]{0.45\columnwidth}
\begin{center}
\includegraphics[height=0.75\columnwidth, bb=0 0 525 525]{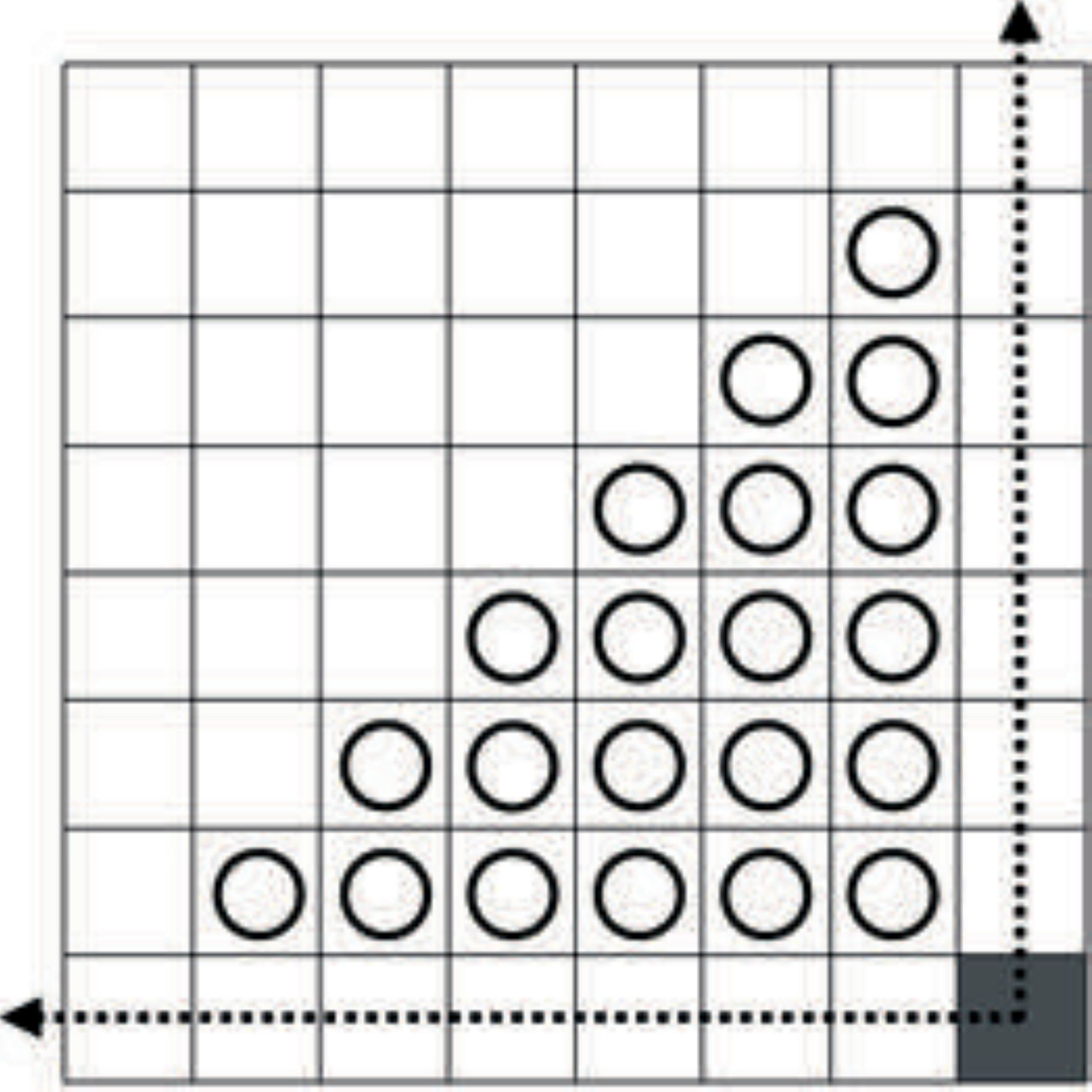}
\caption{Moves of a general Ry\={u}\={o} for $p=8$}
\label{geryuuoumovep8}
\end{center}
\end{minipage}
\begin{minipage}[!htb]{0.45\columnwidth}
\begin{center}
\includegraphics[height=0.7\columnwidth,bb=0 0 275 242]{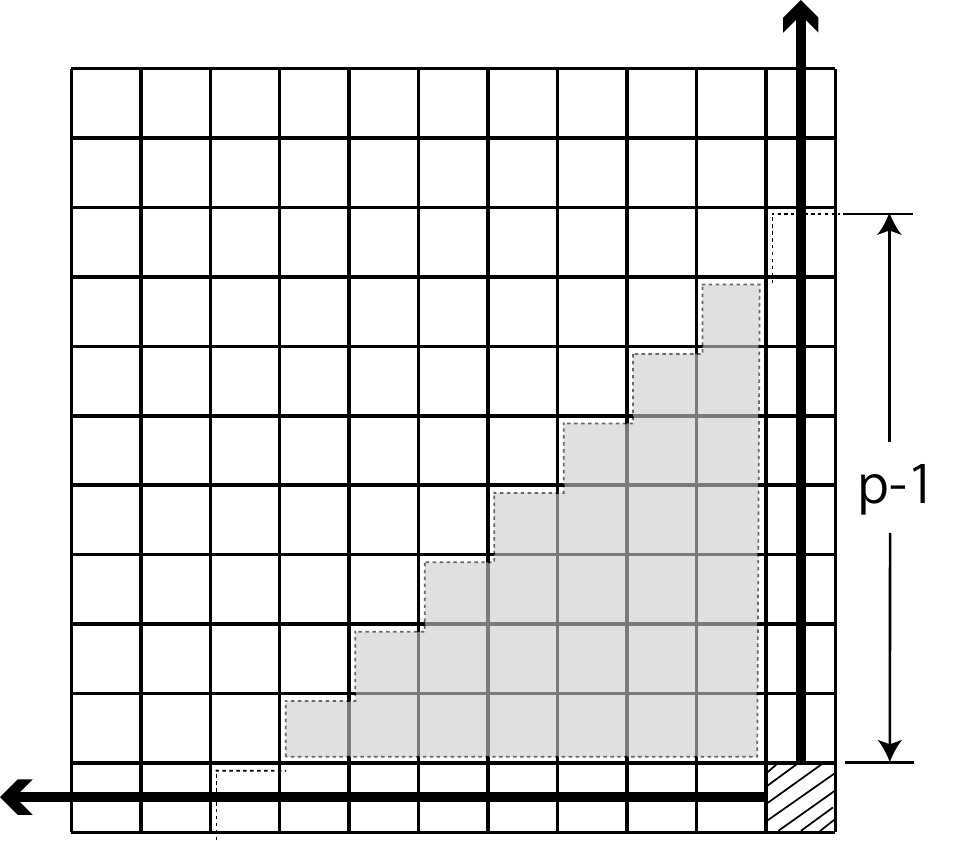}
\caption{Moves of a general Ry\={u}\={o} for $p$}
\label{geryuuoumovepg}
\end{center}
\end{minipage}
\end{figure}
\end{exam}

\begin{defn}\label{defofmexgrundy}
$(i)$ The \textit{minimum excluded value} $({\rm mex})$ of a set, $S$, of non-negative integers is the least non-negative integer that is not in S. \\
$(ii)$ Each position $\mathbf{p}$ of an impartial game G has an associated Grundy number, and we denote it by $\mathcal{G}(\mathbf{p})$.\\
The Grundy number is calculated recursively: 
$\mathcal{G}(\mathbf{p}) = {\rm mex}\{\mathcal{G}(\mathbf{h}): \mathbf{h} \in move(\mathbf{p})\}.$
\end{defn}

\begin{exam}
Examples of calculation of {\rm mex}.
\begin{align*}
&{\rm mex} \{0,1,2,3\}=4, \ {\rm mex} \{1,1,2,3\}=0, \\
&{\rm mex} \{0,2,3,5\}=1, \ and \ {\rm mex} \{0,0,0,1\}=2. 
\end{align*}
\end{exam}

\begin{thm}\label{theoremofsumg}
Let $\mathcal{G}$ be the $Grundy$ number. Then, 
$\mathbf{h}$ is a $\mathcal{P}$-$position$ if and only if $\mathcal{G}(\mathbf{h})=0$. 
\end{thm}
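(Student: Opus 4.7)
The plan is to prove the equivalence by well-founded induction on the position $\mathbf{h}$, using the partial order induced by the move relation. The game is well-founded because every move from $(x,y)$ strictly decreases at least one coordinate (horizontal and vertical moves decrease exactly one, diagonal moves decrease both), and the coordinates are non-negative integers; hence one can induct, for instance, on $x+y$, or equivalently on the game-tree depth from $\mathbf{h}$.

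First I would dispose of the base case. The unique terminal position is $(0,0)$, where $\mathrm{move}((0,0)) = \emptyset$, so $\mathcal{G}((0,0)) = \mathrm{mex}(\emptyset) = 0$. Meanwhile $(0,0)$ is a $\mathcal{P}$-position by Definition \ref{NPpositions}, since the player to move from $(0,0)$ has already lost (the opponent moved onto the winning field). So the equivalence holds at the base.

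For the inductive step, assume the statement for every $\mathbf{h}' \in \mathrm{move}(\mathbf{h})$. I would then argue two directions. \textbf{If} $\mathcal{G}(\mathbf{h}) = 0$: by definition of mex, no $\mathbf{h}' \in \mathrm{move}(\mathbf{h})$ satisfies $\mathcal{G}(\mathbf{h}') = 0$, hence by the induction hypothesis every $\mathbf{h}'$ is an $\mathcal{N}$-position. Any move from $\mathbf{h}$ therefore hands the opponent a winning position, so $\mathbf{h}$ is a $\mathcal{P}$-position. \textbf{Conversely, if} $\mathcal{G}(\mathbf{h}) \neq 0$: then $0$ must belong to $\{\mathcal{G}(\mathbf{h}') : \mathbf{h}' \in \mathrm{move}(\mathbf{h})\}$ (otherwise the mex would be $0$), so there exists $\mathbf{h}' \in \mathrm{move}(\mathbf{h})$ with $\mathcal{G}(\mathbf{h}') = 0$; by the induction hypothesis this $\mathbf{h}'$ is a $\mathcal{P}$-position, and moving there witnesses that $\mathbf{h}$ is an $\mathcal{N}$-position.

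There is essentially no obstacle here; the only point that deserves care is ensuring the induction is well-founded, which follows from the fact that both coordinates are bounded below by $0$ and every legal move strictly decreases $x+y$. This establishes both directions of the equivalence simultaneously and completes the proof.
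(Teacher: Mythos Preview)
Your argument is correct: the standard well-founded induction on game positions, using the definition of $\mathrm{mex}$ to handle both directions, is exactly how this classical result is established. The paper itself does not give a proof at all; it simply cites a textbook (\cite{lesson}) and moves on. So your proposal is strictly more informative than what the paper provides.

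One small remark: the theorem as stated in the paper is for an arbitrary impartial game, whereas you phrase the base case and the well-foundedness argument in terms of the specific Ry\={u}\={o} coordinates $(x,y)$ and the terminal position $(0,0)$. Nothing is wrong---your argument transfers verbatim to any short impartial game by replacing ``$x+y$ decreases'' with ``the game tree is well-founded'' and ``$(0,0)$'' with ``any terminal position''---but if you want to match the generality of the statement you might phrase the induction abstractly rather than tying it to the two-pile setup.
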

For the proof of this theorem, see $\cite{lesson}$.

We define nim-sum that is important for combinatorial game theory. 
\begin{defn}\label{definitionfonimsum11}
Let $x,y$ be non-negative integers, and write them in base $2$, so $x = \sum\limits_{i = 0}^n {{x_i}} {2^i}$ and $y = \sum\limits_{i = 0}^n {{y_i}} {2^i}$ with ${x_i},{y_i} \in \{ 0,1\}$.\\
We define the nim-sum $x \oplus y$ by
\begin{align}
x \oplus y = \sum\limits_{i = 0}^n {{w_i}} {2^i},
\end{align}
where $w_{i}=x_{i}+y_{i} \ (\bmod\ 2)$.
\end{defn}
	
In this section we prove that the Grundy number of the generalized Ry\={u}\={o} Nim for $p$ 
is $\mathcal{G}((x,y)) = \bmod(x+y,p) + p(\lfloor \frac{x}{p} \rfloor \oplus \lfloor \frac{y}{p}\rfloor)$ in Theorem \ref{theofsufficientcondition}. However, we need some lemmas to prove Theorem \ref{theofsufficientcondition}.	
The calculations used in these lemmas may not be easy to understand for some people. Hence, we study Example \ref{exampleforproof}.
You can skip Example \ref{exampleforproof}, and move to Lemma \ref{lemmafornimsum}.
	
\begin{exam}\label{exampleforproof}
Let $p=3$. We prove (\ref{grundyformula1}) for $(x,y)=(17,19)$ using mathematical induction.
\begin{align}\label{grundyformula1}
 \mathcal{G}((x,y)) = \bmod(x+y,p) + p(\lfloor \frac{x}{p} \rfloor \oplus \lfloor \frac{y}{p}\rfloor).
\end{align}
We assume that (\ref{grundyformula1}) is valid for $(x,y)$, such that $x \leq 17$, $y \leq 19$ and, $x+y < 36$ and we prove 
\begin{align}\label{grundyformula1719}
\mathcal{G}((x,y)) = \bmod(36,3) + 3(5 \oplus6) .
\end{align}
By the definition of Grundy number
\begin{align}\label{g1719}
\mathcal{G}((17,19)) = {\rm mex} \{\mathcal{G}((u,v)):(u,v) \in {\rm move}((17,19))\}.
\end{align}
By Definition \ref{defofmoveforgeneralg}, $move ((17,19))$ is the union the sets $M_{g1}$, $M_{g2}$ and $M_{g3}$.
	
\begin{align}
	M_{g1}&= \{(16,19), (15,19), (14,19), \cdots ,(0,19) \}. \label{move1g} \\
	M_{g2}&= \{(17,18), (17,17),(17,16),\cdots ,(17,0) \}. \label{move2g} \\
	M_{g3}&= \{(16,18)\}. \label{move3g}
\end{align}
By the hypothesis of mathematical induction, the set $\{\mathcal{G}((u,v)) \in M_{g1} \}$ is the union of the sets in (\ref{mg11}) and (\ref{mg12}).
\begin{align}
 & \{ \bmod(35,3) + 3(\lfloor \frac{16}{3} \rfloor \oplus \lfloor \frac{19}{3}\rfloor), \bmod(34,3)+3(\lfloor \frac{15}{3} \rfloor \oplus \lfloor \frac{19}{3}\rfloor) \} \nonumber \\
&= \{2+ 3(5 \oplus 6), 1 + 3(5 \oplus 6)\}. \label{mg11}\\
& \{ \bmod(33,3) +3(\lfloor \frac{14}{3} \rfloor \oplus \lfloor \frac{19}{3}\rfloor),
	\bmod(32,3) + 3(\lfloor \frac{13}{3} \rfloor \oplus \lfloor \frac{19}{3}\rfloor), \nonumber \\
 & \cdots ,\bmod(19,3)+3(\lfloor \frac{0}{3} \rfloor \oplus \lfloor \frac{19}{3}\rfloor) \} \nonumber \\
 &=\{3(4 \oplus 6), 2 + 3(4 \oplus 6),1 + 3(4 \oplus 6), 3(3 \oplus 6), 2 + 3(3 \oplus 6),1 + 3(3 \oplus 6), \nonumber \\
&\cdots ,3 (0 \oplus 6), 2 + 3(0 \oplus 6), 1+ 3(0 \oplus 6) \}. \label{mg12}
	\end{align}
	
By the hypothesis of mathematical induction, the set $\{\mathcal{G}((u,v)) \in M_{g2} \}$ is the union of the sets in (\ref{mg21}) and (\ref{mg22}).
\begin{align}
& \{ \bmod(35,3) + 3(\lfloor \frac{17}{3} \rfloor \oplus \lfloor \frac{18}{3}\rfloor)\}= \{ 2 + 3(5 \oplus 6) \}. \label{mg21} \\
& \{ \bmod(34,3) + 3(\lfloor \frac{17}{3} \rfloor \oplus \lfloor \frac{17}{3}\rfloor), \cdots ,\bmod(17,3) + 3(\lfloor \frac{17}{3} \rfloor \oplus \lfloor \frac{0}{3}\rfloor) \} \nonumber \\
& =\{1 + 3(5 \oplus 5), 3(5 \oplus 5),2 + 3(5 \oplus 5), 1 + 3(5 \oplus 4), 3(5 \oplus 4),\nonumber \\
& 2 + 3(5 \oplus 4), \cdots , 1 + 3(5 \oplus 0), 3(5 \oplus 0), 2 + 3(5 \oplus 0)\}. \label{mg22}
\end{align}
	
By the hypothesis of mathematical induction, the set $\{\mathcal{G}((u,v)) \in M_{g3} \}$ is the set in (\ref{mg31}).
\begin{align}
\{ \bmod(34,3) + 3(\lfloor \frac{16}{3} \rfloor \oplus \lfloor \frac{18}{3}\rfloor)\} 
= \{1 + 3(5 \oplus 6) \}. \label{mg31}
\end{align}
	
We denote the union of the sets in (\ref{mg12}) and (\ref{mg22}) by $A_{3,6}$. 
Note that this union of sets is $A_{k,h}$ for $k=5$ and $h=6$ used in Lemma \ref{lemmafornimsum}.\\
Then,
\begin{align}	
&A_{3,6}= \bigcup_{u=0}^{2}\{3((5-t)\oplus 6)+u:t=1,2,\cdots ,5 \} \cup \bigcup_{u=0}^{2}\{3(5 \oplus (6-t))+u\nonumber\\
&:t=1,2,\cdots ,6 \}.\label{mg313}
\end{align}
	
We denote by $C_{5,6,2,1}$ the union of the sets in (\ref{mg11}), (\ref{mg21}) \ and\ (\ref{mg31}).
Note that this union of sets is $C_{k,h,v,w}$ for $k=5,h=6,v=2,w=1$ used in Lemma \ref{lemmaforgrundyof}.
Then
\begin{align}
C_{5,6,2,1} = \{ 3(5 \oplus 6)+w: w = 1,2 \}.\label{mg313b}
\end{align}
	
We need to prove (\ref{grundyformula1719}).
\begin{align}
& \bmod(36,3) + 3(5 \oplus 6) \nonumber \\
&= {\rm mex}(\bigcup_{u=0}^{2}\{3((5-t)\oplus 6)+u:t=1,2,\cdots ,5 \} \nonumber \\
& \cup \bigcup_{u=0}^{2}\{3(5 \oplus (6-t))+u:t=1,2,\cdots ,6 \} \nonumber \\
& \cup \{ 3(5 \oplus 6)+w: w = 1,2 \}).\label{mg313c}
\end{align}
We need the following lemmas to prove (\ref{mg313c}).
\end{exam}

\begin{Lem}\label{lemmafornimsum}
Let $k,h \in \mathbb{Z}_{\geq0}$. Then,
\begin{align}
k \oplus h = {\rm mex}(\{(k-t)\oplus h:t=1,2,\cdots ,k \} \cup \{k \oplus (h-t):t=1,2,\cdots ,h \}).
\end{align}
\end{Lem}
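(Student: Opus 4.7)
The statement is the classical Sprague--Grundy computation for two--pile Nim; to prove the mex equation I would verify the two defining properties of the mex separately: (i) $k\oplus h$ is \emph{not} attained by any element of the right--hand set, and (ii) every non--negative integer $m<k\oplus h$ \emph{is} attained.

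Step (i) is immediate from the cancellation property of $\oplus$. If $(k-t)\oplus h=k\oplus h$ for some $1\le t\le k$, then XOR--adding $h$ to both sides gives $k-t=k$, contradicting $t\ge 1$. The symmetric argument rules out the second subset. Hence $k\oplus h$ is missing from the set, so the mex is at most $k\oplus h$.

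Step (ii) is the heart of the argument. Fix $m$ with $0\le m<k\oplus h$ and set $d=m\oplus(k\oplus h)$, so $d>0$. Let $i$ be the position of the highest $1$--bit of $d$. Since $m<k\oplus h$ and they agree in all positions strictly above~$i$, the bit of $k\oplus h$ at position~$i$ must be~$1$, and therefore exactly one of $k,h$ has a $1$ at position~$i$. Suppose without loss of generality it is $k$ (the other case is symmetric). Define $k':=k\oplus d$. Then $k'$ agrees with $k$ in all positions above~$i$ and has a $0$ at position~$i$, so $k'<k$, i.e.\ we may write $k'=k-t$ with $1\le t\le k$. A direct computation gives
\begin{equation*}
(k-t)\oplus h \;=\; k'\oplus h \;=\; k\oplus d\oplus h \;=\; (k\oplus h)\oplus m\oplus (k\oplus h) \;=\; m,
\end{equation*}
so $m$ lies in the first subset on the right--hand side. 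In the symmetric case $m$ lies in the second subset. Combining (i) and (ii) yields $\mathrm{mex}(\cdots)=k\oplus h$.

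The only mildly delicate point is the justification that at the top bit~$i$ of $d$ the value $k\oplus h$ carries a~$1$ (and hence that at least one of $k,h$ can be decreased to hit $m$); this is the standard ``highest differing bit'' argument for base--$2$ comparison and is the single place where the hypothesis $m<k\oplus h$ is used. Everything else is bookkeeping with $\oplus$.
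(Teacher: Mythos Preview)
Your proof is correct; this is exactly the standard Sprague--Grundy argument for two--pile Nim, with step~(i) using cancellation of $\oplus$ and step~(ii) using the highest--differing--bit trick to produce a predecessor hitting any target value $m<k\oplus h$. The paper itself does not prove this lemma at all: it simply states ``We omit the proof, because this is a well-known fact about Nim sum $\oplus$'' and cites Proposition~1.4 of Siegel's \emph{Combinatorial Game Theory}. Your argument is essentially the proof one would find in such a reference, so there is nothing to compare beyond noting that you have supplied what the paper deliberately left out.
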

\begin{proof}
We omit the proof, because this is a well-known fact about Nim sum $\oplus$ (see Proposition 1.4. (p.181) of \cite{combysiegel}).
\end{proof}
\begin{Lem}\label{nimby3}
Let $A_{k,h} = \bigcup_{u=0}^{p-1}\{p((k-t)\oplus h)+u:t=1,2,\cdots ,k \}
\cup \bigcup_{u=0}^{p-1}\{p(k \oplus (h-t))+u:t=1,2,\cdots ,h \}$ for $k,h \in \mathbb{Z}_{\geq0}$. Then, the following statements are true.\\
$(a)$ For any $v=1,\cdots ,p-1$,
\begin{align}\label{pkhv}
p(k \oplus h)+v= {\rm mex}(A_{k,h} \cup \{ p(k \oplus h)+w:w=0,\cdots ,v-1\} ).
\end{align}
$(b)$ The argument in $(a)$ is valid for $v=0$, and we get $p(k \oplus h)= {\rm mex}(A_{k,h}).$
\end{Lem}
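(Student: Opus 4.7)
The plan is to reduce Lemma \ref{nimby3} to Lemma \ref{lemmafornimsum} by examining which non-negative integers appear in $A_{k,h}$ when classified according to their quotient upon division by $p$.

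First, let me set $S = \{(k-t)\oplus h:t=1,\ldots,k\}\cup\{k\oplus(h-t):t=1,\ldots,h\}$. By Lemma \ref{lemmafornimsum}, $\mathrm{mex}(S) = k\oplus h$, which means $S$ contains every integer in $\{0,1,\ldots,k\oplus h-1\}$ and does not contain $k\oplus h$ (though $S$ may well contain integers larger than $k\oplus h$). Next, observe that by its definition
\[
A_{k,h} = \bigcup_{m\in S}\{pm,\,pm+1,\,\ldots,\,pm+p-1\} = \{n\in\mathbb{Z}_{\geq 0} : \lfloor n/p\rfloor \in S\}.
\]
In other words, membership of $n$ in $A_{k,h}$ depends only on $\lfloor n/p\rfloor$, and such an $n$ lies in $A_{k,h}$ precisely when its quotient by $p$ belongs to $S$.

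Combining these two observations, $A_{k,h}$ contains every $n$ with $\lfloor n/p\rfloor \leq k\oplus h -1$, i.e.\ every $n\in\{0,1,\ldots,p(k\oplus h)-1\}$, while $A_{k,h}$ misses every $n$ with $\lfloor n/p\rfloor = k\oplus h$, i.e.\ every $n\in\{p(k\oplus h),\,p(k\oplus h)+1,\,\ldots,\,p(k\oplus h)+p-1\}$. Part (b) then follows immediately: the smallest non-negative integer not in $A_{k,h}$ is $p(k\oplus h)$. For part (a), adjoining the extra set $\{p(k\oplus h)+w : w=0,\ldots,v-1\}$ with $1\leq v\leq p-1$ fills in the integers $p(k\oplus h),\ldots,p(k\oplus h)+v-1$, so the union contains all of $\{0,1,\ldots,p(k\oplus h)+v-1\}$; since $p(k\oplus h)+v$ still has quotient $k\oplus h$ (because $v<p$) it is absent, giving $\mathrm{mex} = p(k\oplus h)+v$.

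There is not really a hard step here; the only thing to be careful about is the remark that $S$ may contain integers larger than $k\oplus h$, so one should not claim $A_{k,h}\subseteq\{0,\ldots,p(k\oplus h)-1\}$. What is needed for the mex computation is only the one-sided containment $\{0,\ldots,p(k\oplus h)-1\}\subseteq A_{k,h}$ together with the fact that the specific block $\{p(k\oplus h),\ldots,p(k\oplus h)+p-1\}$ is disjoint from $A_{k,h}$, both of which follow directly from $\mathrm{mex}(S)=k\oplus h$ and the quotient characterization above.
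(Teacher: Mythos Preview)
Your proof is correct and follows essentially the same approach as the paper: both reduce to Lemma~\ref{lemmafornimsum} by analyzing which non-negative integers lie in $A_{k,h}$ via their quotient mod~$p$. Your quotient characterization $A_{k,h}=\{n:\lfloor n/p\rfloor\in S\}$ packages cleanly what the paper establishes through a more explicit case split (first showing $p(k\oplus h)+v\notin A_{k,h}$ by treating the remainder cases $u=v$ and $u\neq v$ separately, then writing an arbitrary $s<p(k\oplus h)+v$ as $pt+u$ and invoking Lemma~\ref{lemmafornimsum}), but the logical content is the same.
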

\begin{proof}
$(a)$ By Lemma \ref{lemmafornimsum} and Definition \ref{defofmexgrundy} (the definition of ${\rm mex}$)
\begin{align}\label{nimnotequal}
k \oplus h \notin \{(k-t)\oplus h:t=1,2,\cdots ,k \} \cup \{k \oplus (h-t):t=1,2,\cdots ,h \},
\end{align}
and hence, for any $v = 0,1,\cdots ,p-1$,
\begin{align}\label{nimnotequa2}
p(k \oplus h)+v \notin \{p((k-t)\oplus h)+v:t=1,2,\cdots ,k \}\nonumber \\
\cup \{p(k \oplus (h-t))+v:t=1,2,\cdots ,h \}.
\end{align}

Since $p(k \oplus h)+v \neq p((k-t)\oplus h)+u, p(k \oplus (h-t))+u$ for $u \neq v$, 
\begin{align}
\label{notineq}
p(k \oplus h) + v \notin A_{k,h}. 
\end{align}

Therefore, $p(k \oplus h)+v \notin A_{k,h} \cup \{ p(k \oplus h)+w:w=0,\cdots ,v-1\} .$
Let $s$ be an arbitrary non-negative integer such that
\begin{align}
\label{onesmallerthan}
p(k \oplus h)+v > s \geq 0.
\end{align}

We prove that $s \in A_{k,h} \cup \{ p(k \oplus h)+w:w=0,\cdots ,v-1\}$.\\
If $s \in \{ p(k \oplus h)+w:w=0,\cdots ,v-1\}$, then 
\begin{align}
\label{belongsto1}
s \in A_{k,h} \cup \{ p(k \oplus h)+w:w=0,\cdots ,v-1\}. \nonumber
\end{align}
Suppose that $s \notin \{ p(k \oplus h)+w:w=0,\cdots ,v-1\}$.
We choose non-negative integers $t,u$ such that $s=pt+u$ and $0 \leq u \leq p-1$.
Clearly,
$k \oplus h > t \geq 0$, and 
by Lemma \ref{lemmafornimsum} and Definition \ref{defofmexgrundy} (the definition of {\rm mex}),
\begin{align}
t &\in \{(k-t)\oplus h:t=1,2,\cdots ,k \} \cup \{k \oplus (h-t):t=1,2,\cdots ,h \},
\end{align}
and 
\begin{align}\label{belongsto}
s&=pt+u \in \{p((k-t)\oplus h)+u:t=1,2,\cdots ,k \}\nonumber \\
&\cup \{p(k \oplus (h-t))+u:t=1,2,\cdots ,h \} \subset A_{k,h}. 
\end{align}
Therefore, by (\ref{notineq}), (\ref{onesmallerthan}), (\ref{belongsto1}), (\ref{belongsto}), 
and Definition \ref{defofmexgrundy} (the definition of ${\rm mex}$),
$ p(k \oplus h)+v= {\rm mex}(A_{k,h} \cup \{ p(k \oplus h)+w:w=0,\cdots ,v-1\} ).$\\
$(b)$ The argument in $(a)$ is valid for $v=0$, and hence, we get
$ p(k \oplus h)= {\rm mex}(A_{k,h}).$
\end{proof}

\begin{Lem}\label{lemmaformodp}
Let $x,k \in \mathbb{Z}_{\geq0}$.
If $0 \leq k < \bmod(x,p)$, then
\begin{align}\label{kbelongtoset}
k \in \{\bmod(x-r,p):1 \leq r \leq x \textit{ and } r \leq p-1 \}.
\end{align}	
\end{Lem}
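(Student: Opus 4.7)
The plan is to produce an explicit witness $r$ for the membership claim, so the proof will be constructive rather than by induction or contradiction. Writing $m = \bmod(x,p)$, the hypothesis $0 \le k < m$ immediately forces $m \ge 1$, hence $1 \le m \le p-1$, which is exactly the slack needed to pick a valid shift.

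First I would set $r = m - k$. From $0 \le k \le m - 1$ together with $m \le p - 1$ I get $1 \le r \le m \le p - 1$, which handles the upper bound $r \le p-1$ and the lower bound $r \ge 1$ in one stroke. Next I would verify $r \le x$: since $x = \lfloor x/p \rfloor p + m \ge m$ and $r \le m$, we have $r \le x$. (The edge case $x < p$ is already covered because then $x = m$ and $r \le m = x$.)

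Finally I would compute the residue. Writing $x = \lfloor x/p\rfloor\, p + m$ gives
\begin{align*}
x - r = \lfloor x/p\rfloor\, p + m - (m-k) = \lfloor x/p\rfloor\, p + k,
\end{align*}
and since $0 \le k < m \le p-1 < p$, this yields $\bmod(x-r,p) = k$. Hence $k$ lies in the set on the right-hand side of (\ref{kbelongtoset}), completing the proof.

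There is no real obstacle here; the lemma is essentially a bookkeeping statement about residues modulo $p$. The only point requiring care is making sure the chosen $r$ simultaneously satisfies $1 \le r$, $r \le p-1$, and $r \le x$, which is why the strict inequality $k < \bmod(x,p)$ is built into the hypothesis: it guarantees $m \ge 1$ and thus $r = m-k \ge 1$.
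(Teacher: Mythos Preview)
Your proof is correct and uses essentially the same idea as the paper: the witness is $r = \bmod(x,p) - k$. The paper splits into the two cases $x \le p-1$ and $x > p-1$ before identifying the set $\{0,1,\ldots,\bmod(x,p)-1\}$ with the relevant set of residues, whereas you handle both cases uniformly by writing $x = \lfloor x/p\rfloor\, p + m$ and directly exhibiting $r = m-k$; this is just a cleaner packaging of the same argument.
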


\begin{proof}
Let $k \in \mathbb{Z}_{\geq0}$ such that $0 \leq k < \bmod(x,p)$. We consider two cases.\\
\underline{Case $(a)$} First, we suppose that $x \leq p-1$. 
Since $0 \leq k < \bmod(x,p) = x$, 
$k \in \{0,1,2, \cdots,x-1\}$ = $\{x-r:1 \leq r \leq x \}$ $=\{\bmod(x-r,p):1 \leq r \leq x \leq p-1 \}$
$ = \{\bmod(x-r,p):1 \leq r \leq x \textit{ and } r \leq p-1 \}$.\\
\underline{Case $(b)$} Second, we suppose that $x > p-1$. Then, there exist $q,w \in \mathbb{Z}_{\geq0}$ such that $q \leq p-1$ and $x = pw+q$.
Then, $0 \leq k < \bmod(x,p) = q$.
$k \in \{0,1,2,\cdots ,q-1 \}$
$= \{q-u:1 \leq u \leq q \}$
$=\{\bmod(x-u,p):1 \leq u \leq q \}$
$ \subset \{\bmod(x-u,p):1 \leq u \leq p-1 \}$
$= \{\bmod(x-r,p):1 \leq r \leq x \textit{ and } r \leq p-1 \}$,
where the last equation is implied by the inequality $x > p-1$.
\end{proof}

\begin{Lem}\label{lemmaformex}
Let $V$ be a subset of $\mathbb{Z}_{\geq0}$, and let 
$v \in \mathbb{Z}_{\geq0}$ such that 
\begin{align}\label{conditionformex}
v = {\rm mex}(V).
\end{align}	
If $W$ is a subset of $\mathbb{Z}_{\geq0}$ such that $V \subset W$ and $v \notin W $, then
$v = {\rm mex}(W)$.
\end{Lem}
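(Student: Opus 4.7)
The plan is to unpack the defining properties of $\mathrm{mex}$ and verify them directly for $W$. Recall that $v = \mathrm{mex}(V)$ means precisely two things: (i) $v \notin V$, and (ii) every non-negative integer strictly less than $v$ belongs to $V$. I want to check these same two conditions with $W$ in place of $V$, which by Definition \ref{defofmexgrundy} will immediately give $v = \mathrm{mex}(W)$.

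Condition (i) for $W$ is handed to us by hypothesis: $v \notin W$. For condition (ii), I pick an arbitrary $u \in \mathbb{Z}_{\geq 0}$ with $u < v$. Since $v = \mathrm{mex}(V)$, condition (ii) applied to $V$ gives $u \in V$, and the inclusion $V \subset W$ then upgrades this to $u \in W$. Thus $\{0,1,\dots,v-1\} \subset W$ while $v \notin W$, so $v$ is the least non-negative integer missing from $W$, i.e.\ $v = \mathrm{mex}(W)$.

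There is essentially no obstacle here; the lemma is the elementary monotonicity statement that enlarging a set does not change its $\mathrm{mex}$ as long as the current $\mathrm{mex}$ value is not added. The only thing worth noting is that the symbol $V \subset W$ in the statement is used in the weak (non-strict) sense: the argument uses only $V \subseteq W$, and in fact works unchanged whether or not the inclusion is proper. This flexibility is what makes the lemma convenient for the applications in Lemma \ref{nimby3}, where one wants to read off $\mathrm{mex}$ of a larger move-set from $\mathrm{mex}$ of a cleaner subset.
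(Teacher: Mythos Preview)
Your proof is correct and is precisely the unpacking of the definition of $\mathrm{mex}$ that the paper invokes; the paper's own proof simply states that the lemma ``is directly obtained from the definition of $\mathrm{mex}$'' without writing out the details you supply. One minor remark: the lemma is actually applied in Theorem~\ref{theofsufficientcondition} rather than in Lemma~\ref{nimby3}, but this does not affect your argument.
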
	
\begin{proof}
This lemma is directly obtained from the definition of {\rm mex} (Definition \ref{defofmexgrundy}).
\end{proof}

\begin{Lem}\label{lemmaforgrundyof}
Let $k,h,v,w \in \mathbb{Z}_{\geq0}$ such that $0 \leq v,w \leq p-1$,
and let 
\begin{align}
& C_{k,h,v,w} = \{p(k\oplus h)+\bmod(v+w-t,p):1 \leq t \leq v\},\nonumber\\
& \cup \{p(k\oplus h)+\bmod(v+w-t,p):1 \leq t \leq w\},\nonumber\\
& \cup \{p(\lfloor \frac{pk+v-s}{p} \rfloor	\oplus \lfloor \frac{ph+w-t}{p}\rfloor)+\bmod(v+w-s-t,p):\nonumber\\
& 1 \leq s,t \ and \ s+t \leq p-1 \}.
\end{align}	

Then, the following two statements are true.\\
$(a)$ $p(k\oplus h)+\bmod(v+w,p) \notin C_{k,h,v,w}$. \\
$(b)$ $p(k\oplus h)+u \in C_{k,h,v,w} $ for any $u \in \mathbb{Z}_{\geq0}$ such that 
\begin{align}\label{conditionforb}
0 \leq u < \bmod(v+w,p).
\end{align}		
\end{Lem}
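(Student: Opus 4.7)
The plan is to exploit the unique decomposition $X = pA + B$ with $0 \leq B \leq p-1$: two such numbers agree if and only if both their quotients $A$ and their residues $B$ agree, and the target value $p(k \oplus h) + \bmod(v+w, p)$ is already in this form. For each element of $C_{k,h,v,w}$ I read off its $A$-part and $B$-part, and then for (a) show that at least one disagrees with the target, while for (b) I exhibit, for every $u < \bmod(v+w, p)$, an element whose $A$-part equals $k \oplus h$ and whose $B$-part equals $u$.

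For part (a), elements of the first two sets have $A = k \oplus h$, and their residues are $\bmod(v+w-t, p)$ with $1 \leq t \leq v \leq p-1$ or $1 \leq t \leq w \leq p-1$; since $t \not\equiv 0 \pmod p$, these residues never equal $\bmod(v+w, p)$. For the third set I split on the signs of $s - v$ and $t - w$. If $s \leq v$ and $t \leq w$, then both floors evaluate to $k$ and $h$ so $A = k \oplus h$, but the residue $\bmod(v+w-s-t, p)$ has $2 \leq s+t \leq p-1$ and is again not $\bmod(v+w, p)$. If exactly one of $s > v$, $t > w$ holds, one floor drops by $1$ and $A$ becomes $(k-1) \oplus h$ or $k \oplus (h-1)$; since $(k-1) \oplus k \neq 0$ (respectively $(h-1) \oplus h \neq 0$), this already forces $A \neq k \oplus h$.

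The main obstacle in (a) is the remaining subcase $s > v$ and $t > w$, where $A = (k-1) \oplus (h-1)$ can coincide with $k \oplus h$ (e.g.\ when $k = h = 1$). There I fall back on the residue: the two strict inequalities give $v + w + 2 \leq s + t \leq p - 1$, so $\bmod(v+w, p) = v + w$, while $B = v + w - s - t + p \in [1, p - 1]$; the equality $B = v + w$ would demand $s + t = p$, contradicting $s + t \leq p - 1$. Combining the cases proves (a).

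For part (b), fix $u$ with $0 \leq u < \bmod(v+w, p)$; the case $v = w = 0$ is vacuous. Applying Lemma \ref{lemmaformodp} with $x := v + w$ produces $r$ with $1 \leq r \leq v + w$, $r \leq p - 1$, and $u = \bmod(v + w - r, p)$. If $r \leq v$, taking $t = r$ in the first defining set of $C_{k,h,v,w}$ yields $p(k \oplus h) + u$; if $r \leq w$, taking $t = r$ in the second set works. Otherwise $r > \max(v, w)$, and $r \leq v + w$ then forces $\min(v, w) \geq 1$, so $s := r - w$ and $t := w$ satisfy $1 \leq s \leq v$, $1 \leq t \leq w$, and $s + t = r \leq p - 1$; the resulting third-set element has both floors evaluating to $k$ and $h$ and residue $u$, as required.
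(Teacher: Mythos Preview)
Your proof is correct. Part (b) follows the paper's argument almost verbatim: both invoke Lemma~\ref{lemmaformodp} to write $u=\bmod(v+w-r,p)$ with $1\le r\le v+w$ and $r\le p-1$, and then split $r=s+t$ with $s\le v$, $t\le w$ to land in one of the three defining sets of $C_{k,h,v,w}$.

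For part (a), the paper's argument is noticeably shorter than yours. You carry out a case analysis on the quotient part $A$ according to whether $s\le v$ or $s>v$ and whether $t\le w$ or $t>w$, falling back on the residue $B$ only when the $A$-argument is inconclusive (the case $s>v$ and $t>w$). The paper bypasses the $A$-part entirely: every element of $C_{k,h,v,w}$ has residue modulo $p$ equal to $\bmod(v+w-t,p)$ with $1\le t\le p-1$ (first two sets) or $\bmod(v+w-s-t,p)$ with $1\le s+t\le p-1$ (third set), and in either case this residue differs from $\bmod(v+w,p)$ since $t\not\equiv 0$ and $s+t\not\equiv 0\pmod p$. Two numbers of the form $pA+B$ with $0\le B\le p-1$ that disagree in $B$ are automatically unequal, so your detour through $A=(k-1)\oplus h$, $A=k\oplus(h-1)$, and $A=(k-1)\oplus(h-1)$ is valid but unnecessary.
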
	
\begin{proof}
$(a)$ Let $t \in \mathbb{Z}_{\geq0}$ such $1 \leq t \leq v$. Then, $t \leq v \leq p-1$, and hence,
$\bmod(v+w,p) \neq \bmod(v+w-t,p)$. Similarly, for any $t \in \mathbb{Z}_{\geq0}$ such $1 \leq t \leq w$,
we have $\bmod(v+w,p) \neq \bmod(v+w-t,p)$.
Since $1 \leq s+t \leq p-1$,\\
$\bmod(v+w,p) \neq \bmod(v+w-s-t,p)$.
Therefore, $p(k\oplus h)+\bmod(v+w,p) \notin C_{k,h,v,w}$.\\
$(b)$ 
Let $	0 \leq u < \bmod(v+w,p).$
By Lemma \ref{lemmaformodp}, 
$u = \bmod(v+w-r,p)$ for some $r \in \mathbb{Z}_{\geq0}$ such that $1 \leq r \leq v+w \textit{ and } r \leq p-1$.
Then, there exist $s,t \in \mathbb{Z}_{\geq0}$ such that $ s \leq v, t \leq w \textit{ and } r=s+t$, and
$u = \bmod(v-s+w-t,p)$ and $s+t \leq p-1.$ Here, we consider three cases.\\
\underline{Case $(b.1)$} Suppose that $s=0$. Then, 
$p(k\oplus h)+u$\\
$ = p(k\oplus h)+\bmod(v+w-t,p) \in C_{k,h,v,w} $.\\
\underline{Case $(b.2)$} Suppose that $t=0$. Then, 
$p(k\oplus h)+u$\\
$ = p(k\oplus h)+\bmod(v+w-s,p) \in C_{k,h,v,w} $.\\
\underline{Case $(b.3)$} Suppose that $1 \leq s,t$. Then,
$p(k\oplus h)+u$
$ = p(k\oplus h)+\bmod(v-s+w-t,p)$ \\ $ = p(\lfloor \frac{pk+v-s}{p} \rfloor	\oplus \lfloor \frac{ph+w-t}{p}\rfloor)+\bmod(v-s+w-t,p) \in C_{k,h,v,w}. $
\end{proof}
\begin{thm}\label{theofsufficientcondition}
The Grundy number of the generalized Ry\={u}\={o} 	 Nim for $p$ is 
\begin{align}\label{formulagrundynarihisha}
\mathcal{G}((x,y)) = \bmod(x+y,p) + p(\lfloor \frac{x}{p} \rfloor	\oplus \lfloor \frac{y}{p}\rfloor).
\end{align}
Here, $\bmod(x+y,p)$ is the remainder obtained when $x+y$ is divided by p.
\end{thm}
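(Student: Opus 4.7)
The plan is to proceed by strong induction on $x+y$. Write $x = pk + a$ and $y = ph + b$ with $0 \le a, b \le p-1$, and set $G = p(k \oplus h) + \bmod(a+b, p)$; the base case $(0,0)$ gives $G = 0 = \mathcal{G}((0,0))$. For the inductive step, let $S = \{\mathcal{G}(q) : q \in {\rm move}((x,y))\}$, so that by the induction hypothesis each element of $S$ is obtained by applying the formula to the corresponding reachable position. The task is to show ${\rm mex}(S) = G$.

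To prove $\{0, 1, \ldots, G-1\} \subseteq S$, I will identify Grundy values of reachable positions with the sets from Lemmas \ref{nimby3} and \ref{lemmaforgrundyof}. A horizontal move to $(pk'+a', y)$ with $k' < k$ has Grundy $p(k' \oplus h) + \bmod(a'+b, p)$; as $k'$ and $a'$ range over $\{0, \ldots, k-1\}$ and $\{0, \ldots, p-1\}$ respectively, and $a' \mapsto \bmod(a'+b, p)$ is a bijection on residues, these produce exactly the first union in $A_{k,h}$. Vertical moves with $h' < h$ produce the second union, so $A_{k,h} \subseteq S$. For each $u$ with $0 \le u < \bmod(a+b, p)$, the proof of Lemma \ref{lemmaforgrundyof}(b) furnishes $(s, t)$ with $s \le a$, $t \le b$, $1 \le s+t \le p-1$, such that $p(k\oplus h) + u$ matches the corresponding element of $C_{k,h,a,b}$; the move removing $s$ from the first pile and $t$ from the second (horizontal if $t=0$, vertical if $s=0$, diagonal otherwise) is legal since $s \le a \le x$ and $t \le b \le y$, and gives Grundy $p(k\oplus h) + u$ by the induction hypothesis. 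Combining these with $A_{k,h} \subseteq S$ and Lemma \ref{nimby3}(a) at $v = \bmod(a+b, p)$ yields ${\rm mex}(S) \ge G$.

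For the reverse inequality I verify $G \notin S$. A horizontal move with $k' < k$ has nim-sum $k' \oplus h \neq k \oplus h$ since $\oplus h$ is an involution; an in-block horizontal move ($k' = k$, $a' < a$) has $\bmod(a'+b, p) \neq \bmod(a+b, p)$ because $a - a' \in \{1, \ldots, a\}$ is not divisible by $p$; vertical moves are symmetric; and any diagonal move has $1 \le s+t \le p-1$, so $\bmod(a+b-s-t, p) \neq \bmod(a+b, p)$. In each case the Grundy value differs from $G$ in either the multiple-of-$p$ part or the residue, so $G \notin S$ and hence ${\rm mex}(S) = G$.

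I expect the main obstacle to be the bookkeeping in the second paragraph: one must match the formal indices in Lemma \ref{lemmaforgrundyof} to concrete legal moves and handle degenerate cases ($k=0$, $h=0$, $a=0$, or $b=0$), in which some unions or classes of moves become empty but the correspondence between them must still hold consistently. Once this correspondence is certified, Lemmas \ref{lemmafornimsum}, \ref{nimby3}, and \ref{lemmaforgrundyof} combine to finish the proof.
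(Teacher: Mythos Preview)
Your proposal is correct and follows essentially the same route as the paper: strong induction, the same block decomposition $x=pk+a$, $y=ph+b$, and the same appeal to Lemmas~\ref{nimby3} and~\ref{lemmaforgrundyof} to show that $A_{k,h}\cup\{p(k\oplus h)+u:0\le u<\bmod(a+b,p)\}\subseteq S$ while $G\notin S$. The only cosmetic difference is that the paper packages the Grundy values of all moves as exactly $A_{k,h}\cup C_{k,h,a,b}$ and then invokes Lemma~\ref{lemmaformex}, whereas you verify $G\notin S$ directly by checking that every move changes either the residue $\bmod p$ or the nim-sum of the quotients; both arguments are equivalent.
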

\begin{proof}
We prove by mathematical induction. We assume that 
Equation (\ref{formulagrundynarihisha}) is valid for 
$(u,v)$ when $u<x$ or $v<y$.
Let $(x,y)=(pk+v,ph+w)$ with $0 \leq v \leq p-1$ and $0 \leq w \leq p-1$.

\begin{align}
& move((x,y)) \nonumber \\
& = \{(pk+v-t,ph+w):1 \leq t \leq v\}\label{vpart} \\
& \cup \{(pk+v,ph+w-t):1 \leq t \leq w \}\label{wpart} \\
& \cup \{(pk-t,ph+w):t=1,2,\cdots ,pk\}\nonumber\\
& \cup \{(pk+v,ph-t):t=1,2,\cdots ,ph\} \nonumber\\
& \cup \{(pk+v-s,ph+w-t):1 \leq s,t \textit{ and } s+t \leq p-1\}. \nonumber 
\end{align}

If $v = 0$ or $w=0$, the set (\ref{vpart}) or the set (\ref{wpart}) is empty, respectively. 
By the definition of the Grundy number 
\begin{align}
& \mathcal{G}((x,y)) = {\rm mex}(\{\mathcal{G}((pk+v-t,ph+w)):1 \leq t \leq v \}\label{ggrundyfirstp1} \\
& \cup \{\mathcal{G}((pk+v,ph+w-t)):1 \leq t \leq w\} \label{ggrundyfirstp2} \\
& \cup \{\mathcal{G}((pk-t,ph+w)):t=1,2,\cdots ,pk\} \label{ggrundyfirstp3} \\
& \cup \{\mathcal{G}((pk+v,ph-t)):t=1,2,\cdots ,ph\} \label{ggrundyfirstp4} \\
& \cup \{\mathcal{G}((pk+v-s,ph+w-t)):\label{ggrundyfirstp5} \\
& 1 \leq s,t \textit{ and } s+t \leq p-1\}). \nonumber
\end{align}
Next, we study the set of Grundy numbers in (\ref{ggrundyfirstp3}) and (\ref{ggrundyfirstp4}). 
Since 
\begin{align*}
&\{\bmod(v,p),\bmod(v+1,p),\cdots ,\bmod(v+p-1,p)\} = \{\bmod(w,p),\bmod(w+1,p), \cdots,\\
&\bmod(w+p-1,p)\} = \{0,1,\cdots ,p-1\},
\end{align*}
by the assumption of mathematical induction, we have
\begin{align}
&\{\mathcal{G}((pk-t,ph+w)):t=1,2,\cdots ,pk\} \cup \{\mathcal{G}((pk+v,ph-t)):t=1,2,\cdots ,ph\}\nonumber\\
& = \{\bmod(pk-t+ph+w,p) + p(\lfloor \frac{pk-t}{p} \rfloor	\oplus \lfloor \frac{ph+w}{p}\rfloor): t = 1,2,\cdots ,pk\} \nonumber\\
& \cup \{\bmod(pk+v+ph-t,p) + p(\lfloor \frac{pk+v}{p} \rfloor	\oplus \lfloor \frac{ph-t}{p}\rfloor): t=1,2,\cdots ,ph \} \nonumber\\
& = \{p((k-1)\oplus h)+\bmod(w+p-1,p), p((k-1) \oplus h) + \bmod(w+p-2,p),\nonumber\\
&\cdots ,p((k-1) \oplus h)\nonumber\\
&+\bmod(w,p),\cdots ,p(0 \oplus h) +\bmod(w+p-1,p),p(0 \oplus h) +\bmod(w+p-2,p),\nonumber\\
&\cdots , p(0 \oplus h)\nonumber\\
&+\bmod(w,p)\} \cup \{ p(k \oplus (h-1))+\bmod(v+p-1,p),p(k \oplus (h-1)) \nonumber\\
&+\bmod(v+p-2,p),\cdots ,p(k \oplus (h-1))+\bmod(v,p),\cdots ,p(k \oplus 0)\nonumber\\
&+\bmod(v+p-1,p), p(k \oplus 0)+\bmod(v+p-2,p),\cdots ,p(k \oplus 0)+\bmod(v,p)\}\nonumber \\
& = \bigcup_{u=0}^{p-1}\{p((k-t)\oplus h)+u:t=1,2,\cdots ,k \} \nonumber\\
&\cup \bigcup_{u=0}^{p-1}\{p(k \oplus (h-t))+u:t=1,2,\cdots ,h \} = A_{k,h}. \label{setofG1}
\end{align}
Here, $A_{k,h}$ is the set defined in Lemma \ref{nimby3}.
Next, we study the set of Grundy numbers in (\ref{ggrundyfirstp1}), (\ref{ggrundyfirstp2}), and (\ref{ggrundyfirstp5}).
By the assumption of mathematical induction, 
\begin{align}
& \{\mathcal{G}((pk+v-t,ph+w)):1 \leq t \leq v \} \cup \{\mathcal{G}((pk+v,ph+w-t)):1 \leq t \leq w \}\nonumber \\
& \cup \{\mathcal{G}((pk+v-1,ph+w-1))\}\nonumber \\
& = \{p(k\oplus h)+\bmod(v-t+w,p):1 \leq t \leq v \} \cup \{p(k\oplus h)+\bmod(v+w-t,p):\nonumber\\
&1 \leq t \leq w \}\nonumber\\
& \cup \{p(\lfloor \frac{pk+v-s}{p} \rfloor	\oplus \lfloor \frac{ph+w-t}{p}\rfloor)+\bmod(v-s+w-t,p): 1 \leq s,t \nonumber\\
&\textit{ and } s+t \leq p-1 \} = C_{k,h,v,w}.\label{lastofsets}
\end{align}
Note that $C_{k,h,v,w}$ is used in Lemma \ref{lemmaforgrundyof}.
By (\ref{ggrundyfirstp1}), (\ref{ggrundyfirstp2}), (\ref{ggrundyfirstp3}), (\ref{ggrundyfirstp4}), (\ref{ggrundyfirstp5}), (\ref{setofG1}), and (\ref{lastofsets}). 
\begin{align}\label{grundyequaltoAC}
\mathcal{G}((x,y)) = {\rm mex}(A_{k,h} \cup C_{k,h,v,w}).
\end{align}
By Lemma \ref{lemmaforgrundyof},
\begin{align}\label{Cbigger3k3h}
C_{k,h,v,w} \supset \{p(k\oplus h)+u: 0 \leq u < \bmod(v+w,p)\}.
\end{align}
By Lemma \ref{lemmaforgrundyof},
$p(k\oplus h)+\bmod(v+w,p) \notin C_{k,h,v,w}$, 
and it is clear that 
$p(k\oplus h)+\bmod(v+w,p) \notin A_{k,h}$.
Therefore, 
\begin{align}\label{notbelongCA}
p(k\oplus h)+\bmod(v+w,p) \notin A_{k,h} \cup C_{k,h,v,w}.
\end{align}
By Lemma \ref{nimby3},
\begin{align}\label{lastequation}
&p(k\oplus h)+ \bmod(v+w,p) = {\rm mex}(A_{k,h} \cup \{p(k\oplus h)+u: 0 \leq u < \bmod(v+w,p)\}).
\end{align}
Since $A_{k,h} \cup \{p(k\oplus h)+u: 0 \leq u < \bmod(v+w,p)\} \subset A_{k,h} \cup C_{k,h,v,w}$,
by (\ref{grundyequaltoAC}), (\ref{notbelongCA}), (\ref{lastequation}), and Lemma \ref{lemmaformex}.
\begin{align}
&p(k\oplus h)+ \bmod(v+w,p) \nonumber\\
& ={\rm mex}(A_{k,h} \cup \{p(k\oplus h)+u: 0 \leq u < \bmod(v+w,p)\}) \nonumber\\
&= {\rm mex}(A_{k,h} \cup C_{k,h,v,w}) = \mathcal{G}((x,y)).\nonumber 
\end{align}
\end{proof}

\section{Relation Between Grundy Number and ${\bf Move}$}

By Theorem \ref{theofsufficientcondition}, the Grundy number of the generalized Ry\={u}\={o} Nim for $p$ is 
\begin{align}\label{grundyofgenep}
\mathcal{G}((x,y)) = \bmod(x+y,p) + p(\lfloor \frac{x}{p} \rfloor	\oplus \lfloor \frac{y}{p}\rfloor).
\end{align}
In this section, we consider a necessary condition for a Nim with a new chess piece to obtain the Grundy number expressed by
(\ref{grundyofgenep}).

We define the move set in Definition \ref{defofmoveset}, and present some examples of move sets in Example \ref{exampleofmoveset}.
Althought $move((x,y))$ depends on $(x,y)$, the move set depends only on the chess piece.

\begin{defn}\label{defofmoveset}
Let $move((x,y))$ be the set of positions that can be reached by a chess piece from the position $(x,y)$.
Then, a subset $M$ of $ \mathbb{Z}_{\geq0} \times \mathbb{Z}_{\geq0}$ is said to be the move set of the chess piece when the following hold:\\
$(i)$ $(0,0) \notin M$.\\
$(ii)$ $move((x,y))$ $= \{(x-s,y-t):(s,t) \in M, s \leq x \textit{ and } t \leq y\}$.
\end{defn}

\begin{rem}
Since $(0,0) \notin M$, the move of a chess piece reduces at least one of the coordinates.
\end{rem}

\begin{exam}\label{exampleofmoveset}
$(a)$ The move set of a rook on a chess board is $M=\{ (s,0):s \in \mathcal{N}\} \cup \{ (0,t):t \in \mathcal{N}\}$, and 
$move((x,y)) = \{ (u,y):0 \leq u < x \textit{ and } u \in \mathbb{Z}_{\geq 0}\} \cup \{ (x,v):0 \leq v < y \textit{ and } v \in \mathbb{Z}_{\geq 0}\} = \{(x-s,y-t):(s,t) \in M, s \leq x \textit{ and } t \leq y\}$.\\
$(b)$ The move set of a queen on a chess board is $M=\{ (s,0):s \in \mathcal{N}\} \cup \{ (0,t):t \in \mathcal{N}\} \cup \{ (r,r):r \in \mathcal{N}\}$, and 
$move((x,y)) = \{ (u,y):0 \leq u < x \textit{ and } u \in \mathbb{Z}_{\geq 0}\} \cup \{ (x,v):0 \leq v < y \textit{ and } v \in \mathbb{Z}_{\geq 0}\} \cup \{ (x-r,y-r):1 \leq r \leq x,y \textit{ and } r \in \mathbb{Z}_{\geq 0}\} = \{(x-s,y-t):(s,t) \in M, s \leq x \textit{ and } t \leq y\}$.\\
$(c)$ The move set of the generalized Ry\={u}\={o} Nim for $p$ is \\
$ \{(s,0):s \in \mathcal{N} \} \cup \{(0,t):t \in \mathcal{N} \} \cup \{(s,t):s,t \in \mathbb{Z}_{\geq0}, 1 \leq s+t \leq p-1 \}. $
$move((x,y))= \{(u,y):u<x \textit{ and } u \in \mathbb{Z}_{\geq 0}\} \cup \{(x,v):v<y \textit{ and } v \in \mathbb{Z}_{\geq 0}\}
\cup \{(x-s,y-t):1 \leq s \leq x, 1 \leq t \leq y, s,t \in \mathbb{Z}_{\geq 0} \textit{ and } s+t \leq p-1\} = \{(x-s,y-t):(s,t) \in M, s \leq x \textit{ and } t \leq y\}$.
\end{exam}

Theorem \ref{theofnecessarycondition} shows that 
 a new chess game has to have the move set that include the move set of the generalized Ry\={u}\={o} Nim for $p$ to have 
 the Grundy number $\mathcal{G}^{ \prime} ((x,y)) = \bmod(x+y,p) + p(\lfloor \frac{x}{p} \rfloor	\oplus \lfloor \frac{y}{p}\rfloor).$

\begin{thm}\label{theofnecessarycondition}
Suppose that we make a variant of Wythoff Nim using a new chess piece with the restriction that, by moving, no coordinate increases, and at least one of the coordinates reduces. We also suppose that the Grundy number of this game satisfies Equation (\ref{grundygyakuform}).

\begin{align}\label{grundygyakuform}
\mathcal{G}^{ \prime} ((x,y)) = \bmod(x+y,p) + p(\lfloor \frac{x}{p} \rfloor	\oplus \lfloor \frac{y}{p}\rfloor).
\end{align}
Then, the move set $M$ of this new chess piece satisfies (\ref{moveset1}).
\begin{align}\label{moveset1}
M &\supset \{(s,0):s \in \mathcal{N} \} \cup \{(0,t):t \in \mathcal{N} \}\nonumber\\
&\cup \{(s,t):s,t \in \mathbb{Z}_{\geq0}, 1 \leq s+t \leq p-1 \}.
\end{align}
\end{thm}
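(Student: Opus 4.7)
The plan is to show, for each pair $(s^*, t^*)$ appearing on the right-hand side of (\ref{moveset1}), that there is a position whose Grundy value (as prescribed by (\ref{grundygyakuform})) can only equal the correct mex if $(s^*, t^*) \in M$. The common lever throughout is the observation that $\mathcal{G}'((0, 0)) = 0$ and that, at sufficiently small positions, $(0, 0)$ is the \emph{unique} reachable position of Grundy value $0$; reaching it is then possible only via exactly one move, which is thereby forced into $M$.

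First I would handle the axis generators $(s, 0)$ and $(0, t)$ for $s, t \in \mathcal{N}$. Evaluate at the position $(s, 0)$ itself: (\ref{grundygyakuform}) gives $\mathcal{G}'((s, 0)) = s \geq 1$, so being a mex, some reachable position must carry Grundy value $0$. A move $(a, b) \in M$ applied at $(s, 0)$ requires $b \leq 0$, hence $b = 0$; the reachable set is therefore $\{(s-a, 0) : (a, 0) \in M,\ 1 \leq a \leq s\}$, and among these the formula assigns Grundy value $0$ only to $(0, 0)$. Hence $(s, 0) \in M$. The symmetric argument at $(0, t)$ yields $(0, t) \in M$.

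Next I would handle the diagonal generators. Fix $(s^*, t^*)$ with $s^*, t^* \geq 1$ and $s^* + t^* \leq p-1$, and evaluate at the position $(s^*, t^*)$. The formula yields $\mathcal{G}'((s^*, t^*)) = s^* + t^* \geq 2$, so Grundy value $0$ must again be reached. But every reachable $(u, v)$ satisfies $u \leq s^*$ and $v \leq t^*$, hence $u + v \leq p - 1$; together with $\mathcal{G}'((u, v)) = 0$, which demands both $\bmod(u + v, p) = 0$ and $\lfloor u/p \rfloor \oplus \lfloor v/p \rfloor = 0$, this forces $u + v = 0$, so $(u, v) = (0, 0)$. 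The unique move carrying $(s^*, t^*)$ to $(0, 0)$ is $(s^*, t^*)$ itself, so $(s^*, t^*) \in M$.

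The step that requires the most care is the uniqueness claim in the diagonal case: it is precisely the hypothesis $s + t \leq p - 1$ that confines the reachable positions to the "first block" where the Grundy formula reduces to $u + v$, so that $\mathcal{G}' = 0$ forces $(u,v) = (0,0)$ rather than, say, $(p,0)$ or $(1, p-1)$. Outside this range the argument breaks down, which is consistent with the fact that no larger diagonal displacements need to lie in $M$; the three cases above already exhaust the right-hand side of (\ref{moveset1}), completing the proof.
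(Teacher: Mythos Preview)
Your proof is correct and follows essentially the same route as the paper's: for each required displacement $(s,t)$ you evaluate the prescribed Grundy value at the position $(s,t)$ itself, observe it is positive, and then argue that among positions reachable by coordinate-nonincreasing moves the only one with Grundy value $0$ is $(0,0)$, forcing $(s,t)\in M$. Your write-up is in fact slightly more explicit than the paper's in justifying the uniqueness of $(0,0)$ (particularly on the axes, where the paper simply asserts $(0,0)\in move((s,0))$ without spelling out that all reachable positions lie on the axis), but the underlying argument is identical.
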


\begin{proof}
By (\ref{grundygyakuform}), we have
\begin{align}
& \mathcal{G}^{ \prime}((0,0))=0, \nonumber \\
& \mathcal{G}^{ \prime}((1,0)) =1, \mathcal{G}^{ \prime}((0,1)) =1, \nonumber \\
& \mathcal{G}^{ \prime}((2,0)) =2, \mathcal{G}^{ \prime}((1,1)) =2, \mathcal{G}^{ \prime}((0,2)) =2, \nonumber \\
& \cdots \nonumber \\
& \mathcal{G}^{ \prime}((p-1,0)) =p-1, \mathcal{G}^{ \prime}((p-2,1)) =p-1,\cdots \nonumber\\
& \textit{ and } \mathcal{G}^{ \prime}((0,p-1)) =p-1.\label{listofgrund}
\end{align}

For any $s,t$ such that $0 < s+t \leq p-1$,
$\mathcal{G}^{ \prime}((s,t)) =s+t > 0$, and hence, by the definition of the Grundy number, $move((s,t))$ contains a position whose Grundy number is $0$.
In this game, no coordinate increases when players move the chess piece; thus, 
$move((s,t)) \subset \{(u,v):u+v < s+t \leq p-1 \}$. By (\ref{listofgrund}), $(0,0)$ is the only position in $move((s,t))$ whose Grundy number is $0$.

Therefore,
\begin{align}\label{movepart1}
&(0,0) \in move((s,t)) \textit{ for any } s,t \in \mathbb{Z}_{\geq 0} \textit{ such that } 1 \leq s+t \leq p-1.
\end{align}	

By (\ref{movepart1}), 
$M \supset \{(s,t):s,t \in \mathbb{Z}_{\geq0}, 1 \leq s+t \leq p-1 \}$.
Let $s$ be an arbitrary non-negative integer. Then, there exist $s^{\prime}, u \in \mathbb{Z}_{\geq 0}$ such that $s = ps^{\prime} + u$ and $0 \leq u < p$.
Then, $\mathcal{G}^{ \prime}((s,0)) = \bmod(ps^{\prime}+u,p) + p(\lfloor \frac{ps^{\prime}+u}{p} \rfloor	\oplus \lfloor \frac{0}{p}\rfloor)$
$=u+p\lfloor \frac{s}{p} \rfloor = u +ps^{\prime} = s$, and hence,
$\mathcal{G}^{ \prime}((s,0)) = s > 0$ for any natural number $s$.
By the definition of the Grundy number, $move((s,0))$ contains a position whose Grundy number is $0$.
Therefore, 
\begin{align}\label{movepart2}
(0,0) \in move((s,0)) \textit{ for any } s \in \mathbb{Z}_{\geq 0} \textit{ such that } 0 < s.
\end{align}
By (\ref{movepart2}),
$M \supset \{(s,0):s \in \mathcal{N} \}$.
Similarly,
\begin{align}
&(0,0) \in move((0,t)) \textit{ for any } t \in \mathbb{Z}_{\geq 0} \textit{ such that } 0 < t ,\label{movepart3}\\
&\textit{ and } M \supset \{(0,t):t \in \mathcal{N} \}. \nonumber
\end{align}
Therefore, we have (\ref{moveset1}).
\end{proof}

\section{Generalized {\boldmath Ry\={u}\={o} } Nim with a Pass Move}
An interesting, albeit extremely difficult, problem in combinatorial game theory is to determine what happens when standard game rules are modified so as to allow for a one-time pass, i.e., a pass move which may be used at most once in a game, and not from a terminal position. Once the pass move has been used by either player, it is no longer available.
In this section, we study a generalized Ry\={u}\={o} Nim with a pass move. 
In a generalized Ry\={u}\={o} Nim with a pass move, a position is represented by three coordinates $(x,y,pass)$, where $pass$ denotes the pass move.
When $pass = 1$ or $pass = 0$, then a pass move is available or not available, respectively.
Since a pass move may not be used from a terminal position, a player cannot move from $(0,0,1)$ to $(0,0,0)$ by using a pass move. 
Although there does not seem to be a simple formula for Grundy numbers of Ry\={u}\={o} Nim with a pass move, there are 
simple formulas for the $\mathcal{P}$-position of Ry\={u}\={o} Nim and the generalized Ry\={u}\={o} Nim for $p$ with a pass move.
\begin{defn}\label{defofmovepass}
We define the move of the generalized Ry\={u}\={o} Nim for $p$ with a pass move. 
Let $x,y \in \mathbb{Z}_{\ge 0}$ such that $1 \leq x+y$. We have two cases.\\
$(a)$ 
We define the move of the generalized Ry\={u}\={o} Nim for $p$ when a pass move is not available. 

\noindent
Let 
\begin{align}
& M_{pass1}= \{(u,y,0):u\in \mathbb{Z}_{\geq 0} \textit{ and } u<x\}, \label{ryuuoumoveex1} \\
& M_{pass2}= \{(x,v,0):v\in \mathbb{Z}_{\geq 0} \textit{ and } v<y\} \label{ryuuoumoveex2} \\
& \textit{ and } \nonumber \\
& M_{pass3}= \{(x-s,y-t,0):s,t \in \mathbb{Z}_{\geq 0}, 1 \leq s \leq x, 1\leq t \leq y\nonumber\\
& \textit{ and } s+t \leq p-1\}. \label{ryuuoumoveex3}
\end{align}

We define 
\begin{align}
move(x,y,0)	=M_{pass1} \cup M_{pass2} \cup M_{pass3}. 
\end{align}
$(b)$ 
We define the move of the generalized Ry\={u}\={o} Nim for $p$ when a pass move is available. \\
Let
\begin{align}
& M_{pass4}= \{(u,y,1):u\in \mathbb{Z}_{\geq 0} \textit{ and } u<x\}, \label{ryuuoumoveex1p} \\
& M_{pass5}= \{(x,v,1):v\in \mathbb{Z}_{\geq 0} \textit{ and } v<y\}, \label{ryuuoumoveex2p} \\
& M_{pass6}= \{(x-s,y-t,1):s,t \in \mathbb{Z}_{\geq 0}, \nonumber\\
& 1 \leq s \leq x, 1\leq t \leq y \textit{ and } s+t \leq p-1\} \label{ryuuoumoveex3p}\\
& \textit{ and } \nonumber \\		
& M_{pass7} = \{(x,y,0)\}. \label{ryuuoupassmove}
\end{align}	

We define 
\begin{align}
move(x,y,1)	=M_{pass4} \cup M_{pass5} \cup M_{pass6} \cup M_{pass7}. 
\end{align}
\end{defn}

\begin{rem}
In Figure \ref{geryuuoumovepg}, the sets (\ref{ryuuoumoveex1}) and (\ref{ryuuoumoveex1p}), (\ref{ryuuoumoveex2}) and (\ref{ryuuoumoveex2p}), and (\ref{ryuuoumoveex3}) and (\ref{ryuuoumoveex3p}) denote the horizontal, vertical, and upper left moves, and the set (\ref{ryuuoupassmove}) denotes a pass move.
$M_{pass1}, M_{pass2}$, and $M_{pass3}$ depend on $(x,y,0)$, and $M_{pass4}, M_{pass5},M_{pass6}$, and $M_{pass7}$ depend on $(x,y,1)$. Hence, it is more precise to write $M_{pass1}(x,y,0)$ $,\cdots, $ $M_{pass7}(x,y,1)$. If we write $M_{pass1}(x,y,0)$ instead of $M_{pass1}$, the relations and equations appear more complicated. Therefore,
we omit $(x,y,0)$ and $(x,y,1)$ for convenience. 
\end{rem}

\begin{exam}
When $pass = 0$, i.e., a pass move is not available, the game becomes the same as the game we studied in Section \ref{generalnim}.
Table \ref{ryuuoupassgrundy} lists the Grundy numbers $\mathcal{G}(x,y,1)$ for $x,y = 0,1,2,...,12$ of the general Ry\={u}\={o} Nim for $p=3$ with a pass move. 
Since the third coordinate $pass =1$, these are the Grundy numbers of positions with an available pass move.
It seems that there is no simple formula for the Grundy numbers of the general Ry\={u}\={o} Nim for $p=3$ with a pass move in Table \ref{ryuuoupassgrundy}.

\begin{table}[!htb]
\begin{center}
\caption{Grundy numbers $\mathcal{G}(x,y,1)$ for $x,y = 0,1,2,\cdots ,12$ 
of the general Ry\={u}\={o} Nim for $p=3$ when a pass move is available.}
\label{ryuuoupassgrundy}
\includegraphics[height=0.45\columnwidth,bb=0 0 398 275]{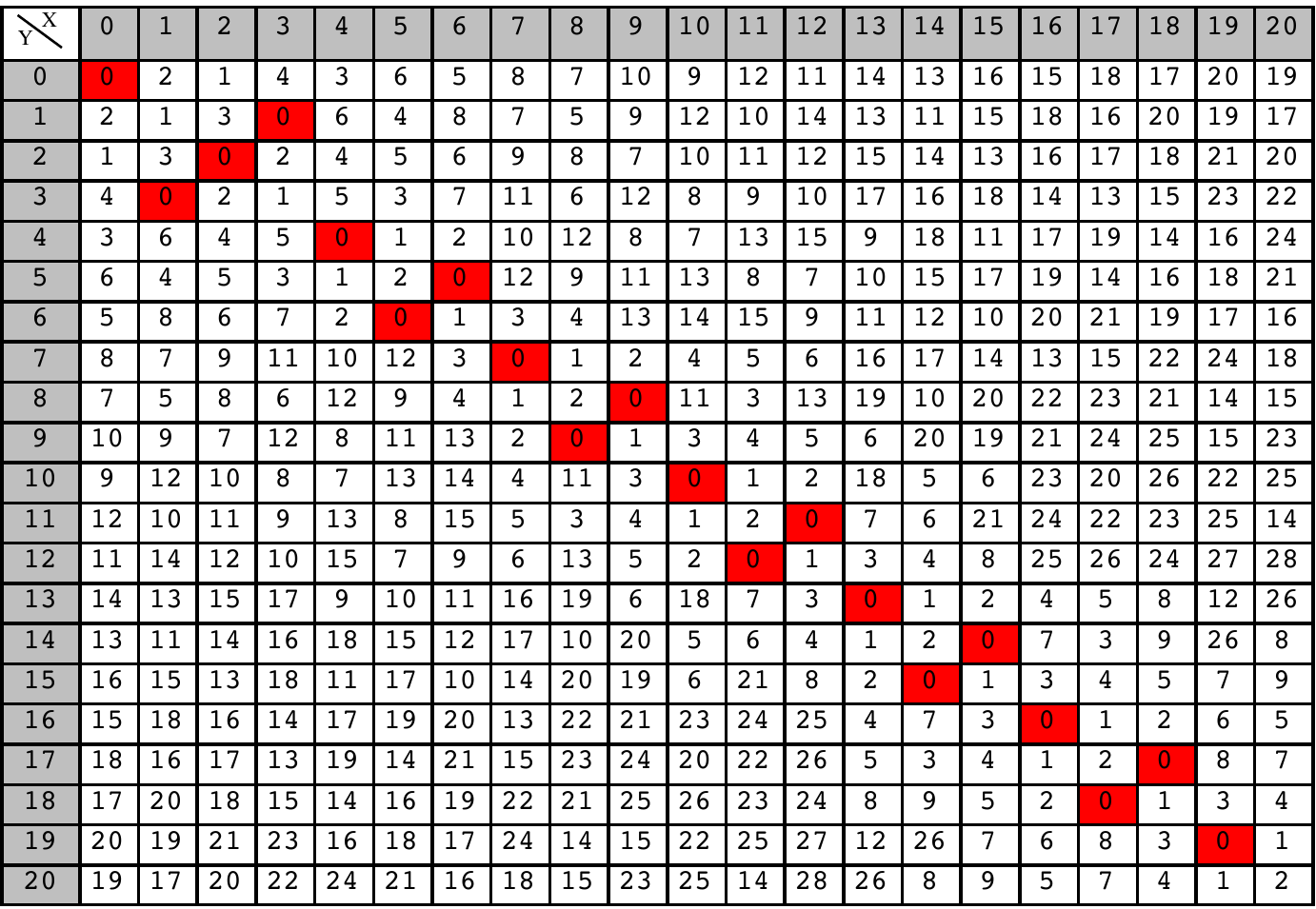}
\end{center}
\end{table}

The Grundy number of positions in red squares is zero, and hence, they are $\mathcal{P}$-positions (these positions are in dark gray when this article is printed in black and white.) There seems to be a certain pattern 
in the set of $\mathcal{P}$-positions, and clearly, these $\mathcal{P}$-positions can be divided into three groups of positions.
The first group consists of three adjoining $\mathcal{P}$-positions, and Table \ref{tableofp11} shows this group. We denote this by $\mathcal{P}_{1,1}$.

\begin{table}[!htb]
\begin{center}
\caption{The set $\mathcal{P}_{1,1}$ for $p=3$}
\label{tableofp11}
\includegraphics[height=0.45\columnwidth,bb=0 0 398 275]{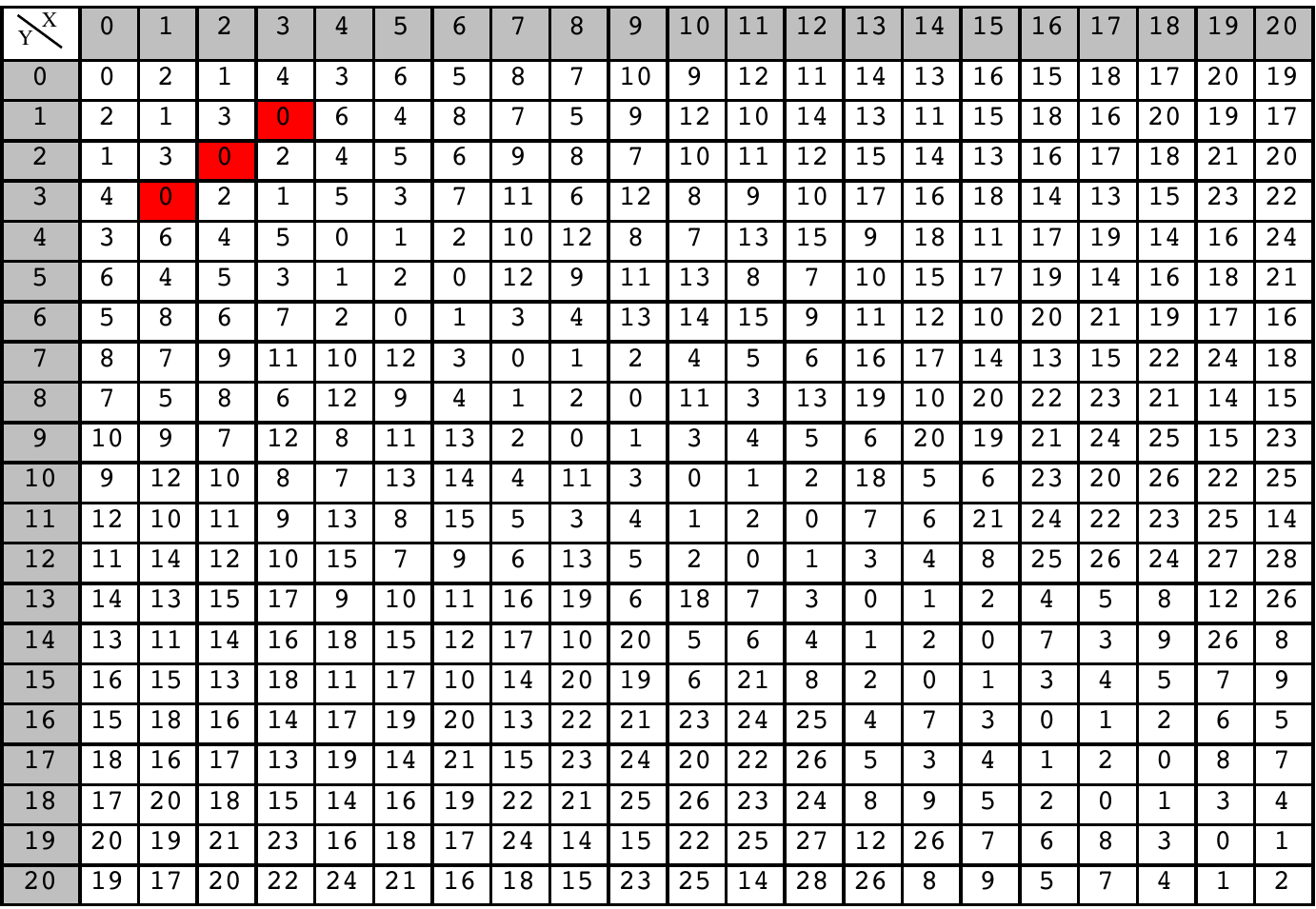}
\end{center}
\end{table}

The second group consists of positions that appear diagonally with a certain interval, and Table \ref{tableofp12} summarizes this group. We denote this group by $\mathcal{P}_{1,2}$.

\begin{table}[!htb]
\begin{center}
\caption{The set $\mathcal{P}_{1,2}$ for $p=3$}
\label{tableofp12}
\includegraphics[height=0.45\columnwidth,bb=0 0 398 275]{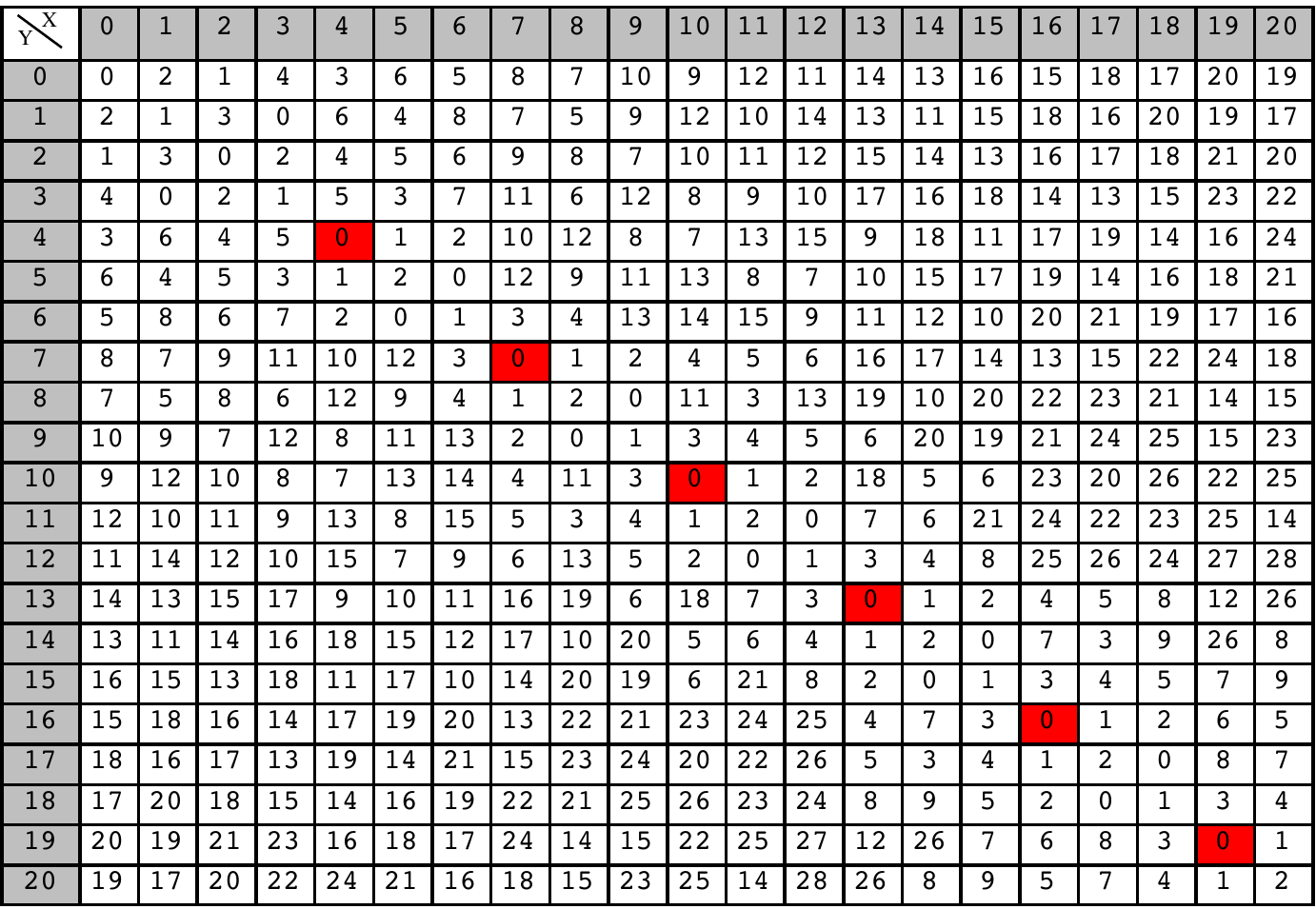}
\end{center}
\end{table}

The third group consists of smaller groups that appear with a certain interval, and each group has two adjoining positions. Table \ref{tableofp13} summarizes this group. We denote this group by $\mathcal{P}_{1,3}$.

\begin{table}[!htb]
\begin{center}
\caption{The set $\mathcal{P}_{1,3}$ for $p=3$}
\label{tableofp13}
\includegraphics[height=0.45\columnwidth,bb=0 0 398 275]{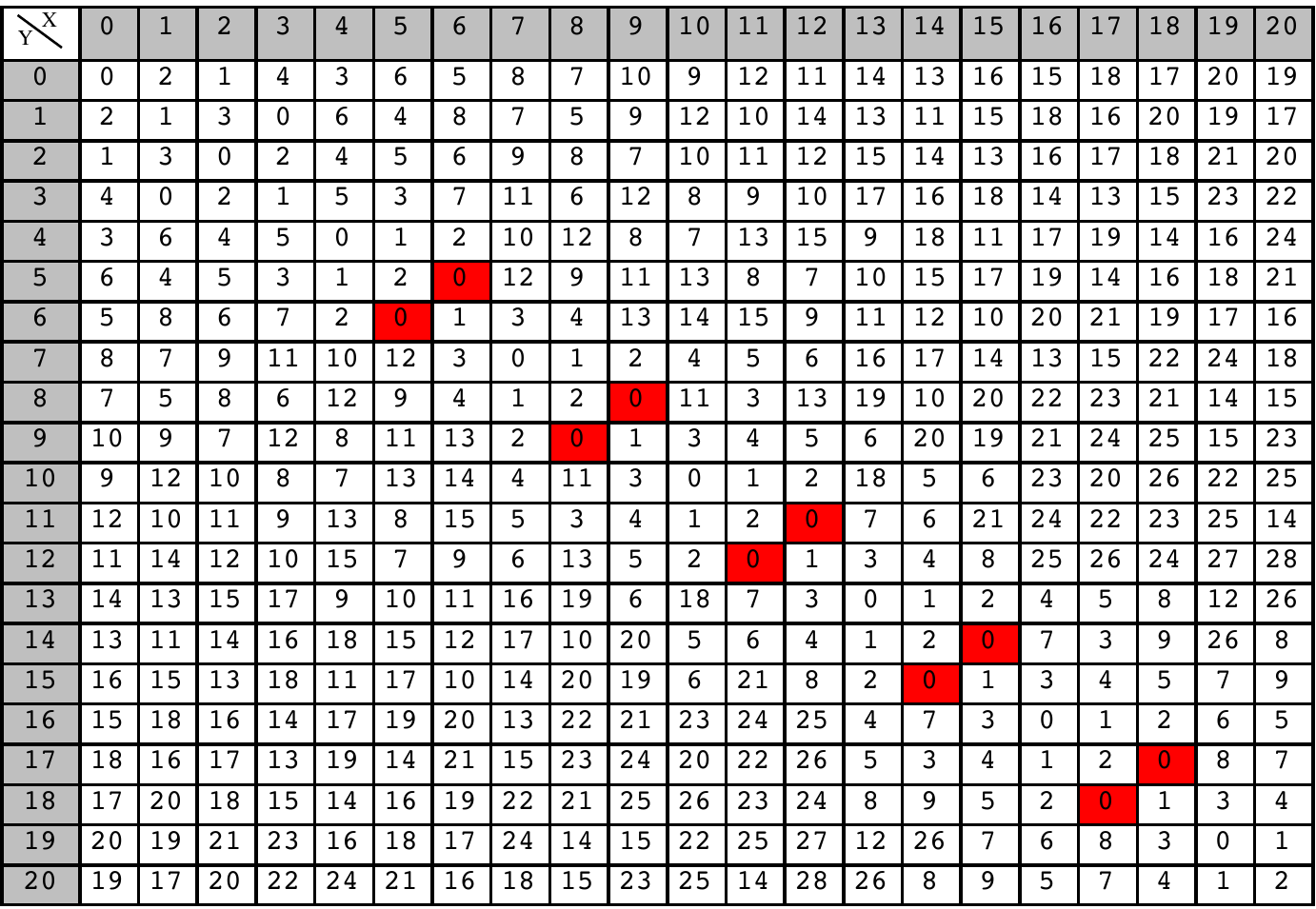}
\end{center}
\end{table}
\end{exam}

In Definition \ref{defofP0P1sets}, we define sets $\mathcal{P}$ and $\mathcal{N}$, and in Theorem \ref{PNsetsforpassmove}, we prove that $\mathcal{P}$ and $\mathcal{N}$ are the sets of $\mathcal{P}$-positions and $\mathcal{N}$-positions of the generalized Ry\={u}\={o} Nim for $p$ with a pass move, respectively.
The set $\mathcal{P}$ consists of four sets: $\mathcal{P}_0$, $\mathcal{P}_{1,1}$, $\mathcal{P}_{1,2}$, and $\mathcal{P}_{1,3}$.

\begin{defn}\label{defofP0P1sets}
Let $\mathcal{P}_0 = \{(x,y,0): x+y = 0 \ (\bmod \ p) \textit{ and } \lfloor \frac{x}{p}\rfloor = \lfloor \frac{y}{p}\rfloor \},$
$\mathcal{P}_{1,1} = \{(m+1,p-m,1): m \in \mathbb{Z}_{\geq 0} \textit{ and } 0 \leq m \leq p-1 \}$, $\mathcal{P}_{1,2} = \{(pn+1,pn+1,1):n \in \mathcal{N}\}$, and 
$\mathcal{P}_{1,3} = \{(k+pn,p+2-k+pn,1):n \in \mathcal{N}, 2 \leq k \leq p \textit{ and } k \in \mathcal{N} \}$.
Let $\mathcal{P}_1 = \mathcal{P}_{1,1} \cup \mathcal{P}_{1,2} \cup \mathcal{P}_{1,3}$ and 
$\mathcal{P} = \mathcal{P}_0 \cup \mathcal{P}_1$.\\
Let $\mathcal{N}_0 = \{(x,y,0):x,y \in \mathbb{Z}_{\geq 0}\}-\mathcal{P}_0$, $\mathcal{N}_1 = \{(x,y,1):x,y \in \mathbb{Z}_{\geq 0}\}-\mathcal{P}_1$ and $\mathcal{N} = \mathcal{N}_0 \cup \mathcal{N}_1$.
\end{defn}
\begin{rem}
Tables \ref{tableofp11}, \ref{tableofp12}, and \ref{tableofp13} summarize the following groups of positions: 
$\mathcal{P}_{1,1}$, $\mathcal{P}_{1,2}$, and $\mathcal{P}_{1,3}$ for $p=3$. 
\end{rem}
\begin{exam}
Table \ref{ryuuoupassgrundyp4} lists the Grundy numbers of the generalized Ry\={u}\={o} Nim
for $p=4$. Tables \ref{tableofp11p4}, \ref{tableofp12p4}, and \ref{tableofp13p4} summarize the sets 
$\mathcal{P}_{1,1}$, $\mathcal{P}_{1,2}$, and $\mathcal{P}_{1,3}$
for $p=4$. 
\begin{table}[!htb]
\begin{center}
\caption{Grundy numbers $\mathcal{G}(x,y,1)$ for $x,y = 0,1,2,\cdots ,12$ 
of the general Ry\={u}\={o} Nim for $p=4$ when a pass move is available.}
\label{ryuuoupassgrundyp4}
\includegraphics[height=0.45\columnwidth,bb=0 0 398 275]{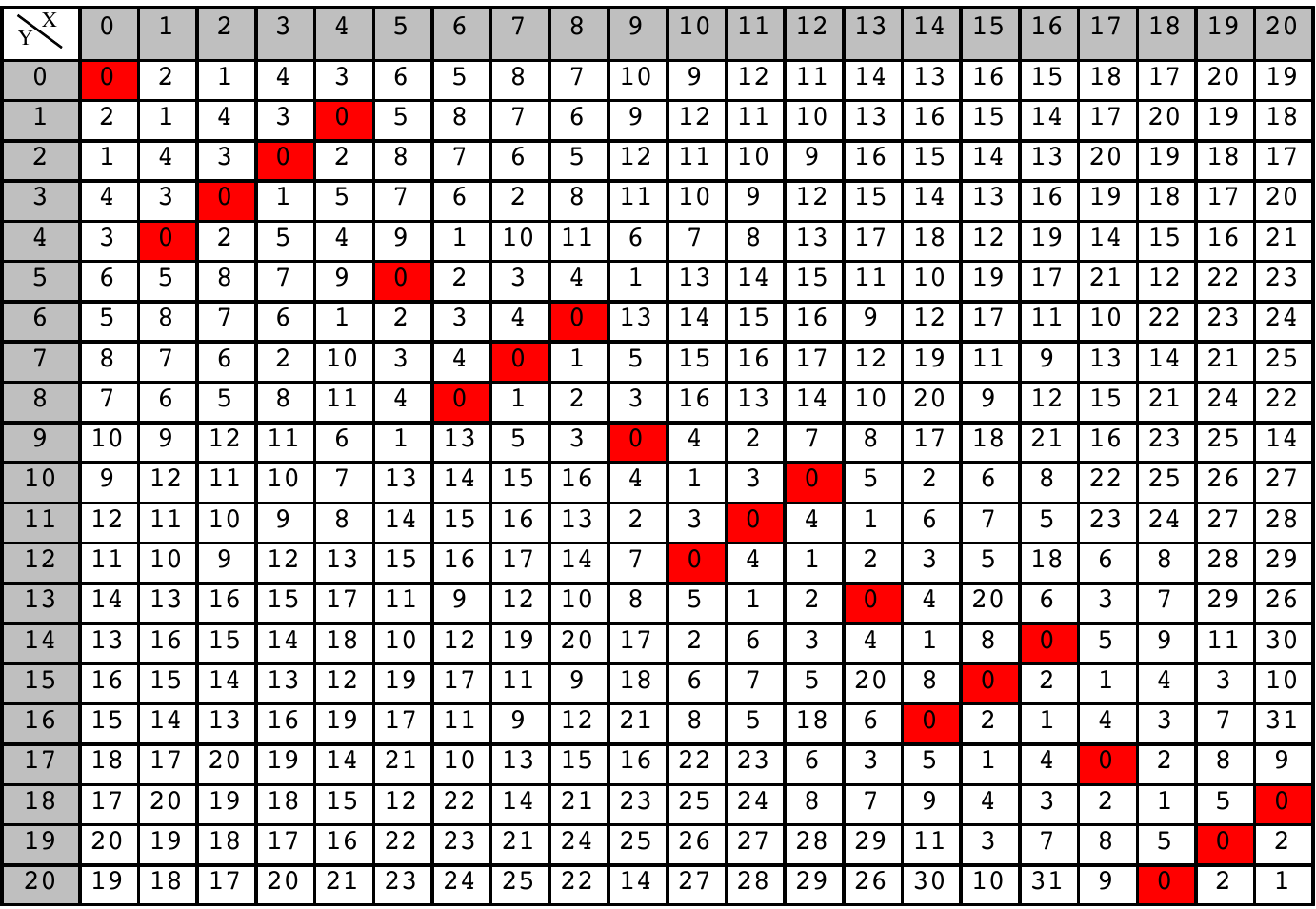}
\end{center}
\end{table}
\begin{table}[!htb]
\begin{center}
\caption{The set $\mathcal{P}_{1,1}$ for $p=4$}
\label{tableofp11p4}
\includegraphics[height=0.45\columnwidth,bb=0 0 398 275]{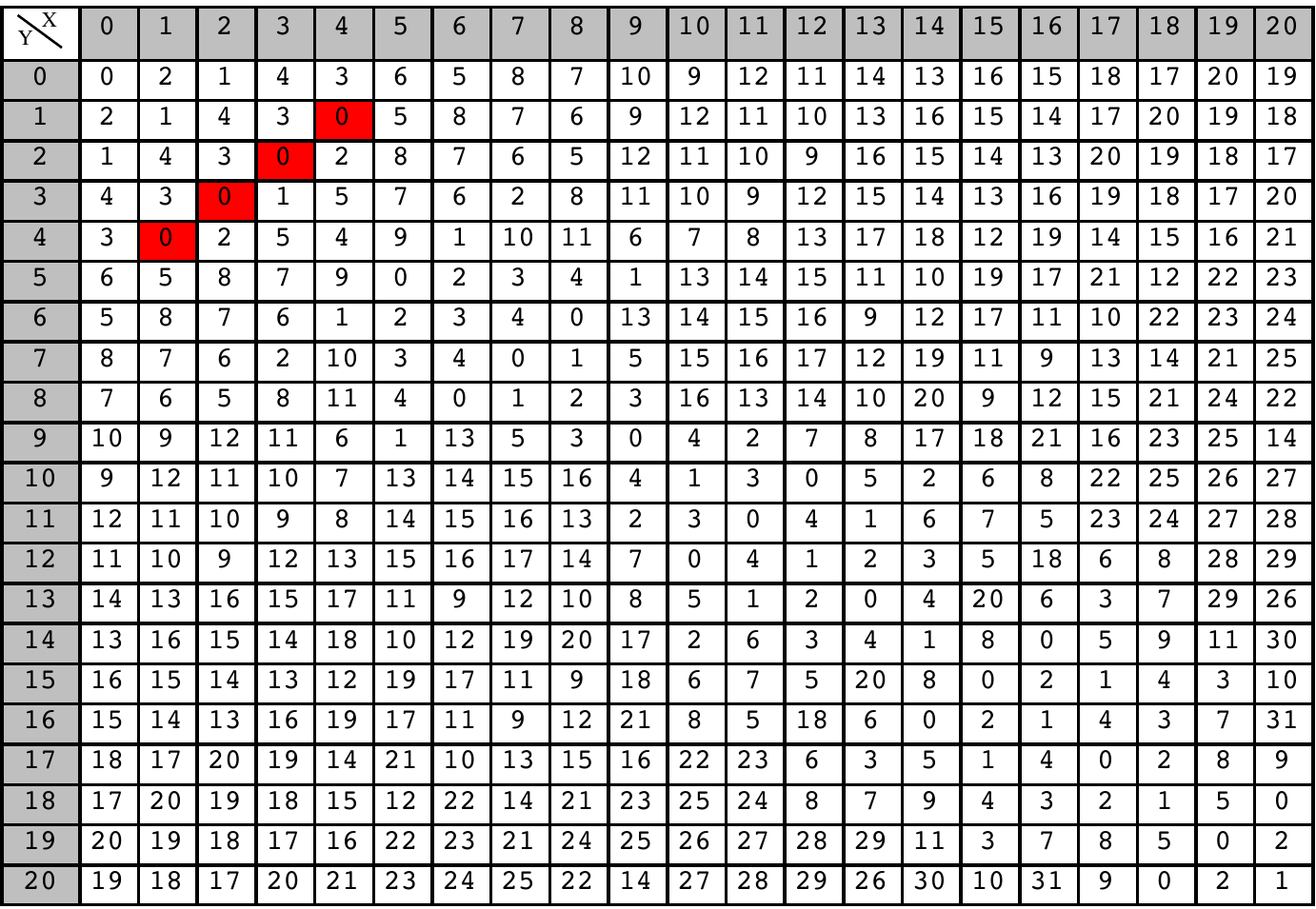}
\end{center}
\end{table}
\begin{table}[!htb]
\begin{center}
\caption{The set $\mathcal{P}_{1,2}$ for $p=4$}
\label{tableofp12p4}
\includegraphics[height=0.45\columnwidth,bb=0 0 398 275]{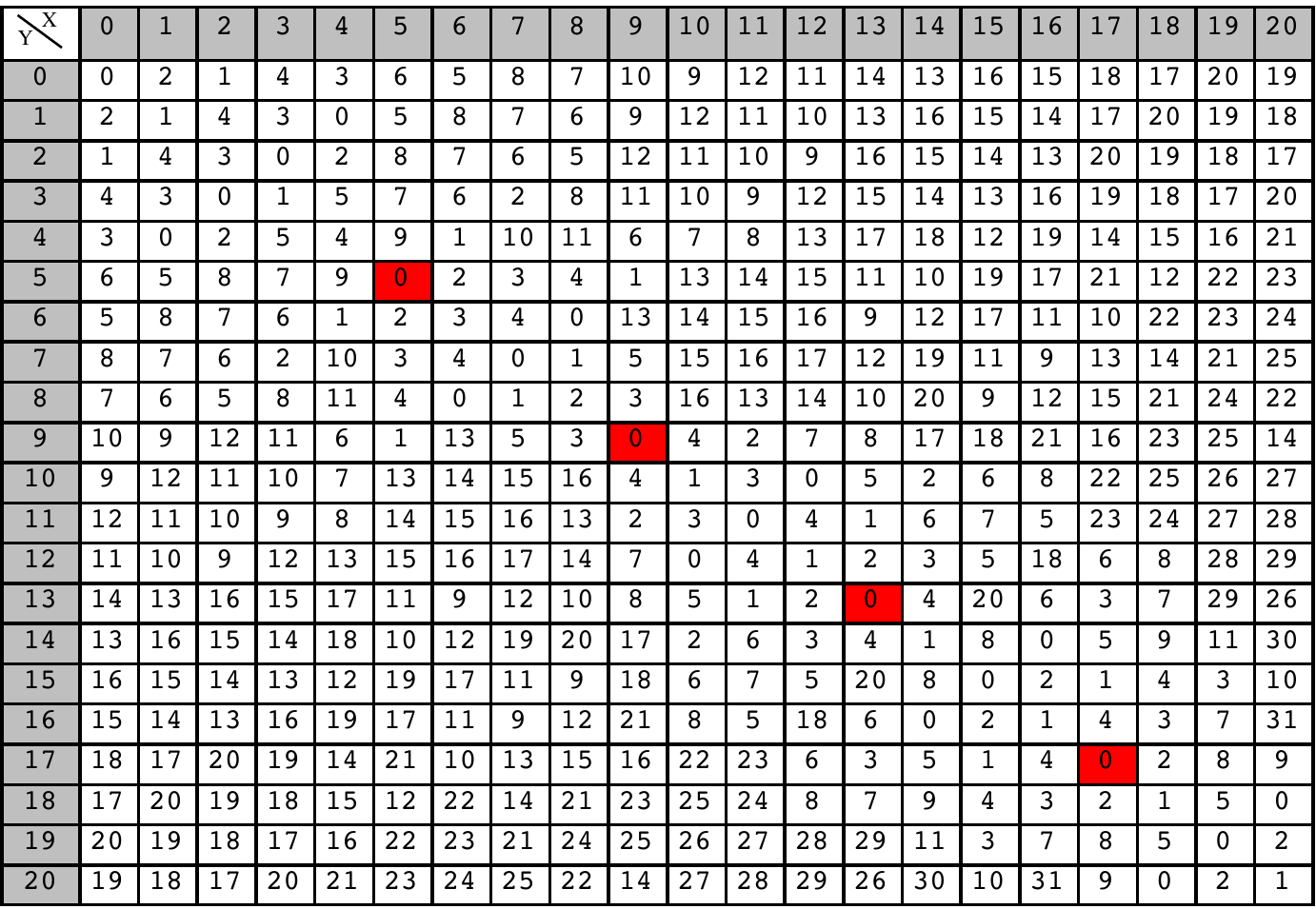}
\end{center}
\end{table}
\begin{table}[!htb]
\begin{center}
\caption{The set $\mathcal{P}_{1,3}$ for $p=4$}
\label{tableofp13p4}
\includegraphics[height=0.45\columnwidth,bb=0 0 398 275]{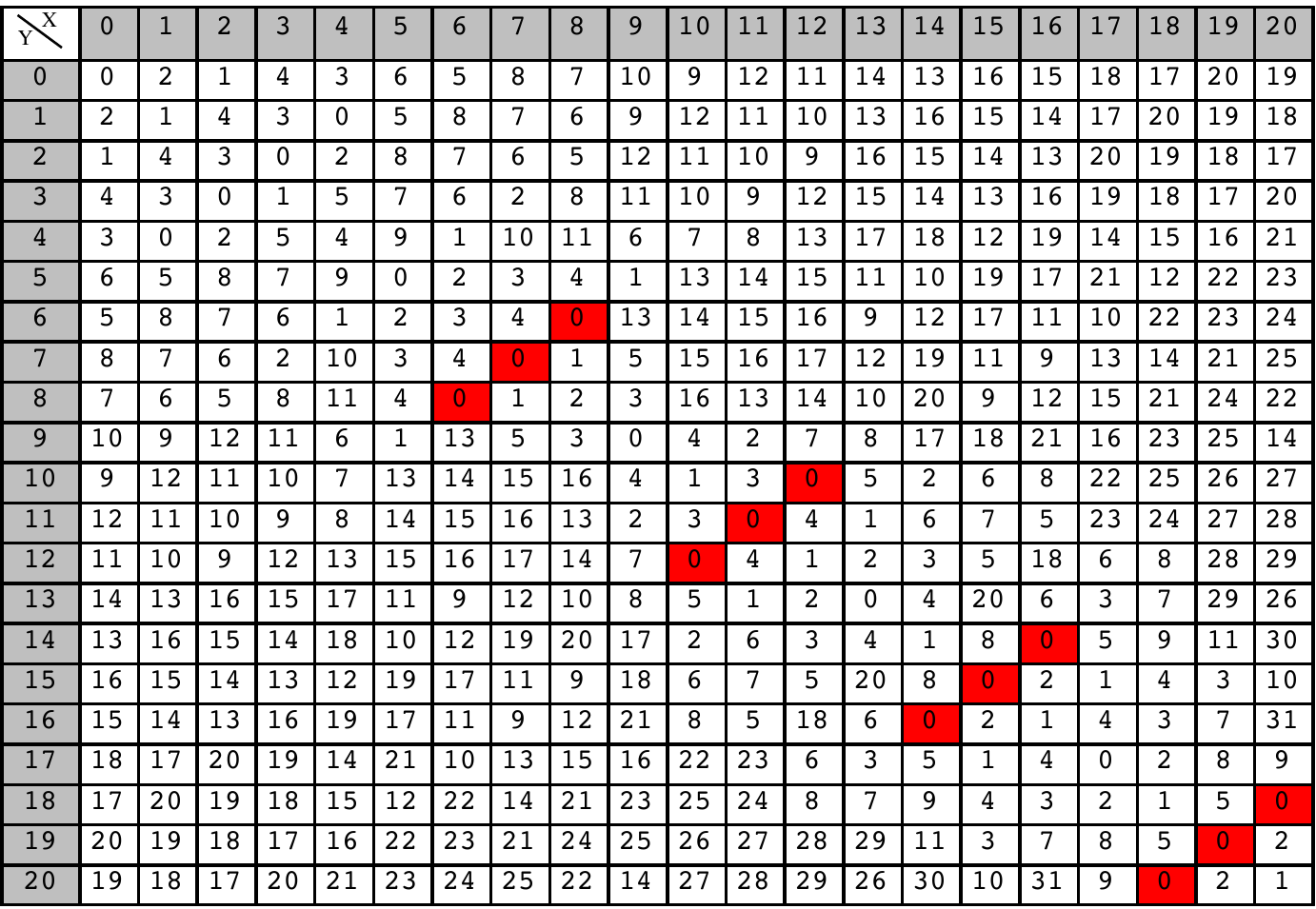}
\end{center}
\end{table}
\end{exam}

\begin{Lem}\label{ppositionp0}
$\mathcal{P}_0$ and $\mathcal{N}_0$ are the set of $\mathcal{P}$-positions and the set of $\mathcal{N}$-positions when a pass move is not available, respectively.
\end{Lem}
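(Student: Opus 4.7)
The plan is to reduce the claim directly to the already-proved Grundy formula. When $pass=0$, Definition~\ref{defofmovepass}(a) shows that $\mathrm{move}(x,y,0)=M_{pass1}\cup M_{pass2}\cup M_{pass3}$, and every element of this set again has third coordinate $0$. Thus the third coordinate is inert and the projection $(x,y,0)\mapsto(x,y)$ is a game isomorphism between the $pass=0$ layer and the generalized Ry\={u}\={o} Nim for $p$ of Section~\ref{generalnim} (whose move set is exactly $M_{g1}\cup M_{g2}\cup M_{g3}$). In particular, the Grundy values on the $pass=0$ layer coincide with those given by Theorem~\ref{theofsufficientcondition}:
\begin{equation*}
\mathcal{G}((x,y,0))=\bmod(x+y,p)+p\bigl(\lfloor x/p\rfloor\oplus\lfloor y/p\rfloor\bigr).
\end{equation*}

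By Theorem~\ref{theoremofsumg}, a position on this layer is a $\mathcal{P}$-position iff its Grundy number is $0$. Since $\bmod(x+y,p)\in\{0,1,\dots,p-1\}$ and $p(\lfloor x/p\rfloor\oplus\lfloor y/p\rfloor)$ is a non-negative multiple of $p$, the sum vanishes iff each summand vanishes, i.e.\ iff $\bmod(x+y,p)=0$ and $\lfloor x/p\rfloor\oplus\lfloor y/p\rfloor=0$. The latter is equivalent to $\lfloor x/p\rfloor=\lfloor y/p\rfloor$. Comparing with Definition~\ref{defofP0P1sets}, these are precisely the positions in $\mathcal{P}_0$, whence the $\mathcal{N}$-positions on the $pass=0$ layer are exactly $\mathcal{N}_0$.

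There is no real obstacle here; the only thing one must check carefully is the equivalence ``$\mathcal{G}((x,y,0))=0\iff(x,y,0)\in\mathcal{P}_0$'', which rests on the observation that the two summands of the Grundy formula lie in disjoint residue classes modulo $p$ and therefore cannot cancel. Everything else is a direct appeal to the results already established.
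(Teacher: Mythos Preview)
Your proof is correct and follows essentially the same approach as the paper: reduce to the Grundy formula of Theorem~\ref{theofsufficientcondition} and invoke Theorem~\ref{theoremofsumg}. Your version is slightly more explicit---you spell out the game isomorphism between the $pass=0$ layer and the game of Section~\ref{generalnim}, and you justify why the two summands in the Grundy formula must vanish separately---but the argument is the same.
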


\begin{proof}
Since $pass = 0$, by Theorem \ref{theoremofsumg}, a position $(x,y,0)$ of a generalized Ry\={u}\={o} Nim for $p$ is a $\mathcal{P}$-position if and only if
\begin{align*}\label{gxyequql0}
\mathcal{G}((x,y)) = \bmod(x+y,p) + p(\lfloor \frac{x}{p} \rfloor	\oplus \lfloor \frac{y}{p}\rfloor) =0. 
\end{align*}
By the definition of $\mathcal{P}_0$, (\ref{gxyequql0}) is true 
if and only if $(x,y,0) \in \mathcal{P}_0$. \\
Since $\mathcal{P}_0$ is the set of $\mathcal{P}$-positions, $\mathcal{N}_0$ is the set of $\mathcal{N}$-positions.
\end{proof}
We need three lemmas to prove Theorem \ref{PNsetsforpassmove}.
\begin{Lem}\label{lemmaforabc}
$(a)$ If $(x,y,1) \in \mathcal{P}_1$, then $x+y \geq p+1$.\\
$(b)$ If $(x,y,1) \in \mathcal{P}_{1,1}$, then $x+y = p+1$, $x+y = 1 \ (\bmod \ p)$, and $1 \leq x,y \leq p$.\\
$(c)$ If $(x,y,1) \in \mathcal{P}_{1,2} \cup \mathcal{P}_{1,3}$, then $x+y = 2 \ (\bmod \ p)$ and $x-y, y-x \leq p-2$.\\
$(d)$ If $(x,y,1) \in \mathcal{P}_{1,2} $, then $x=y = 1 \ (\bmod \ p)$. \\
$(e)$ If $(x,y,1) \in \mathcal{P}_{1,3} $, then $x+y = p+2 \ (\bmod \ 2p)$. 
\end{Lem}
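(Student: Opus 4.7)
The statement is essentially a bookkeeping lemma that records easy arithmetic facts about the three explicit families $\mathcal{P}_{1,1}$, $\mathcal{P}_{1,2}$, $\mathcal{P}_{1,3}$ given in Definition \ref{defofP0P1sets}. My plan is simply to substitute the parametrizations directly into each conclusion and check the resulting elementary inequalities and congruences. There is no induction or case analysis on game moves needed; the content lives entirely at the level of the coordinate formulas.

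For (a), I would note that for $(x,y,1) \in \mathcal{P}_{1,1}$ the sum is exactly $x+y = (m+1)+(p-m) = p+1$. For $(x,y,1) \in \mathcal{P}_{1,2}$, with $n \ge 1$, the sum is $2pn+2 \ge 2p+2 \ge p+1$, and for $(x,y,1) \in \mathcal{P}_{1,3}$, again with $n \ge 1$, the sum is $2pn+p+2 \ge 3p+2 \ge p+1$. Part (b) follows immediately from the formula $x+y=p+1$ and the bounds $0 \le m \le p-1$ that give $1 \le m+1 \le p$ and $1 \le p-m \le p$; the congruence $p+1 \equiv 1 \pmod{p}$ is then automatic.

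For (c), the $\mathcal{P}_{1,2}$ case gives $x+y = 2pn+2 \equiv 2 \pmod p$ and $x=y$, so $|x-y|=0 \le p-2$. For $\mathcal{P}_{1,3}$ the sum is $2pn+p+2 \equiv 2 \pmod p$, and computing $x-y = 2k-p-2$ and $y-x = p+2-2k$ with $2 \le k \le p$ yields both $x-y \le p-2$ and $y-x \le p-2$. Part (d) is just reading off $x=pn+1=y$ modulo $p$ from the definition of $\mathcal{P}_{1,2}$. Part (e) follows since $x+y = 2pn + p + 2$ reduces to $p+2$ modulo $2p$.

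The whole proof is therefore a few lines of substitution; no step is genuinely hard. The only mild subtlety is in part (c), where one must check both signs of $x-y$ against $p-2$ using the full range $2 \le k \le p$ rather than a one-sided bound; this is the one place to be careful not to drop a case.
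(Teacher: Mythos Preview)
Your proposal is correct and takes essentially the same approach as the paper: the paper's own proof is the single sentence ``$(a)$, $(b)$, $(c)$, $(d)$, and $(e)$ follow directly from Definition \ref{defofP0P1sets},'' and your argument simply spells out those direct substitutions. Your added care in part (c), checking both $x-y$ and $y-x$ against $p-2$ over the full range $2 \le k \le p$, is exactly the routine verification the paper leaves to the reader.
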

\begin{proof}
$(a)$, $(b)$, $(c)$, $(d)$, and $(e)$ follow directly from Definition \ref{defofP0P1sets}.
\end{proof}
\begin{Lem}\label{fromPtononP}
$move((x,y,1)) \cap \mathcal{P} = \emptyset $ for any $(x,y,1) \in \mathcal{P}_1$.
\end{Lem}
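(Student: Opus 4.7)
The plan is a case analysis on which of the three subsets $\mathcal{P}_{1,1}$, $\mathcal{P}_{1,2}$, $\mathcal{P}_{1,3}$ contains $(x,y,1)$, and within each case to check that none of the four kinds of moves in Definition \ref{defofmovepass}(b) --- the horizontal moves $M_{pass4}$, the vertical moves $M_{pass5}$, the diagonal moves $M_{pass6}$, and the pass move $M_{pass7}=\{(x,y,0)\}$ --- reach a position in $\mathcal{P}=\mathcal{P}_0\cup\mathcal{P}_{1,1}\cup\mathcal{P}_{1,2}\cup\mathcal{P}_{1,3}$. The arithmetic invariants supplied by Lemma \ref{lemmaforabc} are the core tool: $\mathcal{P}_{1,1}$ satisfies $x+y=p+1$ with $1\le x,y\le p$; $\mathcal{P}_{1,2}$ satisfies $x=y$ with $x\equiv 1\pmod p$ and $x\ge p+1$; and $\mathcal{P}_{1,3}$ satisfies $x+y\equiv p+2\pmod{2p}$ with $|x-y|\le p-2$. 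Together these give the cleanest separators of the three subsets, namely $x+y\bmod p$ and the parity of $(x+y-2)/p$.

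The easy sub-cases are settled by sum-size or single modular checks. The pass move is immediate: membership in $\mathcal{P}_0$ forces $x+y\equiv 0\pmod p$, but each subset of $\mathcal{P}_1$ forces $x+y\equiv 1$ or $2\pmod p$, which is nonzero for $p\ge 3$. Whenever the source subset has strictly smaller $x+y$ than every element of a target subset --- for example, from $\mathcal{P}_{1,1}$ into $\mathcal{P}_{1,2}\cup\mathcal{P}_{1,3}$, where all targets have $x'+y'\ge 2p+2>p+1$ --- the claim is free, since every move weakly decreases each coordinate. The horizontal and vertical sub-cases fix one coordinate, reducing membership of the image in a target $\mathcal{P}_{1,j}$ to a single equation in the indices $n,k$ or $n',k'$ of Definition \ref{defofP0P1sets}; under the ranges $n,n'\ge 1$ and $2\le k,k'\le p$ this forces the remaining coordinate to be unchanged, contradicting the strict-decrease condition $u<x$ or $v<y$.

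The main obstacle is the diagonal sub-cases, where the window $1\le s+t\le p-1$ is tight. The key observation to isolate first is that for any source and target in $\mathcal{P}_1$, the difference $s+t=(x+y)-(x'+y')$ is either $0$ or a nonzero multiple of $p$ --- for instance, $\mathcal{P}_{1,3}\to\mathcal{P}_{1,3}$ gives $2p(n-n')$, $\mathcal{P}_{1,2}\to\mathcal{P}_{1,2}$ gives $2p(n-n')$, and the cross cases $\mathcal{P}_{1,2}\leftrightarrow\mathcal{P}_{1,3}$ give $p(2(n-n')\pm 1)$. In every case the value lies outside $[1,p-1]$, so the diagonal move is impossible. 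Once this arithmetic skeleton is recorded, the remaining bookkeeping across the nine source-target combinations is mechanical; I expect the only real care to be needed in the smallest boundary case $p=3$ and in confirming that the strict inequalities on $n\ge 1$ in the definition of $\mathcal{P}_{1,2}$ and $\mathcal{P}_{1,3}$ really do exclude the sporadic collisions with $\mathcal{P}_{1,1}$.
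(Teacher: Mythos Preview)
Your plan is correct and matches the paper's proof: both split into three cases according to which of $\mathcal{P}_{1,1},\mathcal{P}_{1,2},\mathcal{P}_{1,3}$ contains the source, dispose of the pass move via $x+y\equiv 1$ or $2\pmod p$, exploit the $x\leftrightarrow y$ symmetry to reduce $M_{pass5}$ to $M_{pass4}$, and then rule out each target subset for the horizontal and diagonal moves using exactly the invariants recorded in Lemma~\ref{lemmaforabc}. One small correction to your diagonal bookkeeping: for a source in $\mathcal{P}_{1,2}\cup\mathcal{P}_{1,3}$ and a target in $\mathcal{P}_{1,1}$ the difference $s+t$ is congruent to $1\pmod p$, not a multiple of $p$ as you assert, but since the source then has $x+y\ge 2p+2$ while the target has $x'+y'=p+1$, one still gets $s+t\ge p+1>p-1$ and the conclusion stands.
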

\begin{proof}
By Definition \ref{defofmovepass}
\begin{align}
move(x,y,1)	=M_{pass4} \cup M_{pass5} \cup M_{pass6} \cup M_{pass7},
\end{align}
where
\begin{align}
&M_{pass4}= \{(u,y,1):u \in \mathcal{N} \textit{ and } u<x\}, \label{ryuuoumoveex1p2} \\
&M_{pass5}= \{(x,v,1):v \in \mathcal{N} \textit{ and } v<y\}, \label{ryuuoumoveex2p2} \\
&M_{pass6}= \{(x-s,y-t,1): s,t \in \mathcal{N} \textit{ and } s+t \leq p-1\},\label{ryuuoumoveex3p2}\\
& \textit{ and } \nonumber \\		
& M_{pass7} = \{(x,y,0)\}.\label{ryuuoupassmove2}
\end{align}
We prove that $M_{pass4} \cup M_{pass5} \cup M_{pass6} \cup M_{pass7} \cap \mathcal{P} = \emptyset$.
For any $(x,y,1) \in \mathcal{P}_1$, by $(b)$ and $(c)$ of Lemma \ref{lemmaforabc}, we have $x+y = 1$ or $2$ $ \ (\bmod \ p)$, and hence, $(x,y,0) \notin \mathcal{P}_0$. 
Therefore, $M_{pass7} \cap \mathcal{P} = \emptyset$, i.e., one cannot reach an element of $\mathcal{P}$ by using a pass move. Since $\mathcal{P}_1$ is unchanged when we switch $x$ and $y$, $M_{pass4} \cap \mathcal{P}_1 = \emptyset$ implies that $M_{pass5} \cap \mathcal{P}_1 = \emptyset$.
Therefore,
we only have to prove that 
$(M_{pass4} \cup M_{pass6} )\cap \mathcal{P}_1 = \emptyset$. We consider three cases.\\
\underline{Case $(a)$} Suppose that $(x,y,1) = (1+m,p-m,1) \in \mathcal{P}_{1,1}$ for a non-negative integer $m$ such that $0 \leq m \leq p-1$. We consider subcases $(a.1)$ and $(a.2)$.\\
\underline{Subcase $(a.1)$} Let $(m+1-u,p-m,1)$ be an arbitrary element of $M_{pass4}$ such that $u \in \mathcal{N}$ and $ u \leq m+1$.
Since $(m+1-u)+(p-m)=p+1-u < p+1$, by $(a)$ of Lemma \ref{lemmaforabc}, 
$(m+1-u,p-m,1-v) \notin \mathcal{P}_1$. Therefore, $M_{pass4} \cap \mathcal{P}_1 = \emptyset$, i.e.,
from $(m+1,p-m,1)$, one cannot reach an element in $\mathcal{P}_1$ by reducing the first coordinate.\\
\underline{Subcase$(a.2)$} Let $(m+1-u,p-m-v,1)$ be an arbitrary element of $M_{pass6}$ such that $s,t \in \mathcal{N} \textit{ and } s+t \leq p-1.$
Since $ (m+1-u) + (p-m-v) \leq p$, by $(a)$ of Lemma \ref{lemmaforabc}, $M_{pass6} \cap \mathcal{P}_1 = \emptyset$.\\
By subcases $(a.1)$ and $(a.2)$, for any $(x,y,1) \in \mathcal{P}_{1,1}$,
\begin{align}\label{resultofcasea}
(M_{pass4} \cup M_{pass6} )\cap \mathcal{P}_1 = \emptyset.		
\end{align}
\underline{Case $(b)$} Let $(1+pn,1+pn,1) \in \mathcal{P}_{1,2}$. We consider subcases $(b.1)$ and $(b.2)$.\\
\underline{Subcase$(b.1)$} Let $(1+pn-u,1+pn,1)$ be an arbitrary element of $M_{pass4}$.
Since $1+pn >p$, the second coordinate is greater than $p$. Therefore, by $(b)$ of Lemma \ref{lemmaforabc}, $(1+pn-u,1+pn,1) \notin \mathcal{P}_{1,1}.$
Since $1+pn-u < 1+pn$, $(1+pn-u,1+pn,1) \notin \mathcal{P}_{1,2}.$\\
There exists no $u \in \mathcal{N}$ such that 
$(1+pn-u)+(1+pn) = 2 \ (\bmod \ p)$ and $(1+pn)-(1+pn-u) \leq p-2$.
Therefore, by $(c)$ of Lemma \ref{lemmaforabc},
$(1+pn-u,1+pn,1) \notin \mathcal{P}_{1,3}.$
Therefore, $M_{pass4} \cap \mathcal{P}_1 = \emptyset.$\\
\underline{Subcase$(b.2)$} Let $(1+pn-u,1+pn-v,1)$ be an arbitrary element of $M_{pass6}$ such that $s,t \in \mathcal{N} \textit{ and } s+t \leq p-1$.
Since $(1+pn-u)+(1+pn-v) \neq p+1$, by $(b)$ of Lemma \ref{lemmaforabc}, $(1+pn-u,1+pn-v) \notin \mathcal{P}_{1,1}$.\\
Since $2np+2 > (1+pn-u)+(1+pn-v) \geq (2n-1)p + 3$, 
$(pn+1-u)+(pn+1-v) \neq 2 \ (\bmod \ p)$. Therefore, by $(c)$ of Lemma \ref{lemmaforabc}, 
$(pn+1-u,pn+1-v) \notin \mathcal{P}_{1,2} \cup \mathcal{P}_{1,3}$, and $M_{pass6} \cap \mathcal{P}_1 = \emptyset.$\\
By subcases $(b.1)$ and $(b.2)$ for any $(x,y,1) \in \mathcal{P}_{1,2}$

\begin{equation}\label{resultofcaseb}
(M_{pass4} \cup M_{pass6} )\cap \mathcal{P}_1 = \emptyset.
\end{equation}

\noindent
\underline{Case $(c)$} Let $(k+pn,p+2-k+pn,1) \in \mathcal{P}_{1,3}$ such that $n \in \mathcal{N}, 2 \leq k \leq p \textit{ and } k \in \mathcal{N}$. We consider subcases $(c.1)$ and $(c.2)$\\
\underline{Subcase$(c.1)$} Let $(k+pn-u,p+2-k+pn,1)$ be an arbitrary element of $M_{pass4}$.\\
Since $p+2-k+pn > p$, by $(b)$ of Lemma \ref{lemmaforabc}, $(k+pn-u,p+2-k+pn,1) \notin \mathcal{P}_{1,1}.$
There exists no $k \in \mathbb{Z}_{\geq 0}$ and $u \in \mathcal{N}$ such that $2 \leq k \leq p$ and 
$k+pn-u = p+2-k+pn = 1$ $ \ (\bmod \ p)$. Therefore, by $(d)$ of Lemma \ref{lemmaforabc},
$(k+pn-u,p+2-k+pn,1) \notin \mathcal{P}_{1,2}.$ 
There exists no $u$ such that $(k+pn-u) + (p+2-k+pn) = p+2 \ (\bmod \ 2p)$ and 
$ (p+2-k+pn) -(k+pn-u) \leq p-2$. Therefore, by $(c)$ and $(e)$ of Lemma \ref{lemmaforabc},
$(k+pn-u,p+2-k+pn,1) \notin \mathcal{P}_{1,3}.$ Therefore, $M_{pass4} \cap \mathcal{P}_1 = \emptyset.$\\
\underline{Subcase$(c.2)$} Let $(k+pn-u,p+2-k+pn-v,1)$ be an arbitrary element of $M_{pass6}$ such that 
$u,v \in \mathcal{N}$ and $u+v \leq p-1$.
Then, 
\begin{align}\label{p2lgkplesnp}
2pn+3 \leq (k+pn-u)+(p+2-k+pn-v) \leq (2n+1)p+1.
\end{align}
By (\ref{p2lgkplesnp}), $(k+pn-u)+(p+2-k+pn-v) > p+1$, and hence, by $(b)$ of Lemma \ref{lemmaforabc},
$(k+pn-u,p+2-k+pn-v) \notin \mathcal{P}_{1,1}$.\\	
By (\ref{p2lgkplesnp}), $(k+pn-u)+(p+2-k+pn-v) \neq 2 \ (\bmod \ p)$, and hence, by $(c)$ of Lemma \ref{lemmaforabc},
$(k+pn-u,p+2-k+pn-v) \notin \mathcal{P}_{1,2} \cup \mathcal{P}_{1,3}.$\\
Therefore, $M_{pass6} \cap \mathcal{P}_1 = \emptyset.$\\
By subcases $(c.1)$ and $(c.2)$ for any $(x,y,1) \in \mathcal{P}_{1,3}$,
\begin{equation}\label{resultofcasec}
(M_{pass4} \cup M_{pass6} )\cap \mathcal{P}_1 = \emptyset.		
\end{equation}
By (\ref{resultofcasea}), (\ref{resultofcaseb}), and (\ref{resultofcasec}),
$(M_{pass4} \cup M_{pass6} )\cap \mathcal{P}_1 = \emptyset$ for any $(x,y,1) \in \mathcal{P}_1$.
\end{proof}
\begin{Lem}\label{fromnonPtoP}
$move((x,y,1)) \cap \mathcal{P} \neq \emptyset $ for any $(x,y,1) \in \mathcal{N}_1$. 
\end{Lem}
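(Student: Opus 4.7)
The plan is constructive: for each $(x,y,1) \in \mathcal{N}_1$, I would exhibit an explicit move landing in $\mathcal{P}$, splitting the argument on whether the pass move alone suffices. In the first case, when $(x,y,0) \in \mathcal{P}_0$, the pass move $M_{pass7}$ carries $(x,y,1)$ to $(x,y,0) \in \mathcal{P}$, finishing immediately.

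The substance lies in the second case, where $(x,y,0) \notin \mathcal{P}_0$. Only a horizontal, vertical, or diagonal move can help, and since those moves preserve $pass = 1$ the target must lie in $\mathcal{P}_1$. Exploiting the swap-symmetry of both $\mathcal{P}_1$ and the move set under $(x,y) \mapsto (y,x)$, I would assume WLOG $x \leq y$. The structural observation I would use is that Definition \ref{defofP0P1sets} assigns to every column $x \geq 1$ a unique $\mathcal{P}_1$ point $(x, y^*(x), 1)$, and symmetrically $(x^*(y), y, 1)$ for every row $y \geq 1$. Explicitly, $y^*(x) = p+1-x$ for $1 \leq x \leq p$; $y^*(x) = pn+1$ for $x = pn+1$ with $n \geq 1$; and $y^*(x) = (2n+1)p+2-x$ for $pn+2 \leq x \leq (n+1)p$ with $n \geq 1$.

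Case 2 would then split into three sub-cases. If $y > y^*(x)$, I take the vertical move in $M_{pass5}$ to $(x, y^*(x), 1) \in \mathcal{P}_1$; if instead $x > x^*(y)$, I take the horizontal move in $M_{pass4}$ to $(x^*(y), y, 1)$. The remaining sub-case is $y \leq y^*(x)$ together with $x \leq x^*(y)$, with at least one inequality strict (forced by $(x,y,1) \notin \mathcal{P}_1$). Here I would produce a diagonal move: writing $x = pa + b$, $y = pc + d$ with $0 \leq b, d \leq p-1$ and using $x \leq y$, the combined strict inequalities pin $(x,y)$ to a small triangle just below-left of a specific $(x', y') \in \mathcal{P}_1$ selected from $\mathcal{P}_{1,1}$, $\mathcal{P}_{1,2}$, or $\mathcal{P}_{1,3}$ according to the block index $n = \lfloor x/p \rfloor$ and the residues $b, d$. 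Setting $s = x - x'$ and $t = y - y'$, one checks directly that $1 \leq s, t$ and $s + t \leq p - 1$, so the move lies in $M_{pass6}$.

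The hard part will be this third sub-case. The below-left region decomposes into several geometric pieces indexed by $(\lfloor x/p \rfloor, \lfloor y/p \rfloor)$ and by the residues $(b, d)$; some of those pieces are actually already handled by Case 1 (they satisfy $(x,y,0) \in \mathcal{P}_0$), so the diagonal construction must cover exactly the complement and the computed $s + t$ must stay within $p - 1$ uniformly. A clean way to organize the bookkeeping is to mirror the case analysis used in Lemma \ref{fromPtononP}, matching each of the families $\mathcal{P}_{1,1}$, $\mathcal{P}_{1,2}$, $\mathcal{P}_{1,3}$ with the region of $\mathcal{N}_1$ positions that must be routed into it, so that the forward (from $\mathcal{N}_1$) and backward (from $\mathcal{P}_1$) analyses dovetail.
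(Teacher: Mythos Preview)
Your organization differs genuinely from the paper's. The paper writes $x=k+pn$, $y=h+pn'$ and runs a six-way enumeration on the pair of block indices $(n,n')$ together with the residues $(k,h)$, exhibiting an explicit target in each branch; the pass move appears in only one branch (their Subcase~(d.2.2), when $k+h=p$). Your scheme --- try the pass first, then exploit that each row and column with index $\ge 1$ meets $\mathcal{P}_1$ in a unique point $y^*(x)$, $x^*(y)$ --- is more structural and, for positions with $x+y\ge p+1$, would give a tidier argument than the paper's enumeration.

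There is, however, a concrete gap at the small end. Once Case~1 is excluded you insist the target lie in $\mathcal{P}_1$, and your Sub-case~3 asserts $(x,y)$ sits in a small triangle just below-left of some $\mathcal{P}_1$ point. But every $(a,b,1)\in\mathcal{P}_1$ has $a+b\ge p+1$ (Lemma~\ref{lemmaforabc}(a)), and horizontal, vertical and diagonal moves never increase the coordinate sum. Hence if $\min(x,y)=0$, or if $1\le x,y$ with $x+y\le p-1$, no point of $\mathcal{P}_1$ is reachable by \emph{any} move from $(x,y,1)$: for $p=3$ the position $(1,1,1)\in\mathcal{N}_1$ falls into your Sub-case~3 yet has no move into $\mathcal{P}_1$, and $(0,y,1)$ with $y\ge 1$ has $y^*(0)$ undefined. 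The paper covers precisely these positions in its Subcases~(d.1), (d.2.1), and (e.1) by moving to $(0,0,1)$ --- terminal, hence a genuine $\mathcal{P}$-position, although Definition~\ref{defofP0P1sets} formally omits it from $\mathcal{P}_1$ (so the paper is tacitly repairing its own definition here). Your plan needs the same escape hatch: before invoking $y^*$ and the diagonal construction, dispose separately of $\min(x,y)=0$ and of $1\le x,y$ with $x+y\le p-1$ by the move to $(0,0,1)$.
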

\begin{proof}
We consider six cases.\\
\underline{Case $(a)$} Let $(x,y,1)$ = $(k+pn,h+pn^{\prime},1) \notin \mathcal{P}_1$ such that $n,n^{\prime} \in \mathcal{N}$, $n^{\prime} > n$, $k,h \in \mathbb{Z}_{\geq 0}$, and $k,h \leq p-1$. We consider subcases $(a.1)$, $(a.2)$, $(a.3)$, and $(a.4)$.\\
\underline{Subcase $(a.1)$} If $k > 2$, then $(k+pn,p+2-k+pn) \in M_{pass5} \cap \mathcal{P}_{1,3} \subset move((x,y,1)) \cap \mathcal{P}$.\\
\underline{Subcase $(a.2)$} Suppose that $k = 2$. We consider subsubcases $(a.2.1)$ and $(a.2.2)$.\\
\underline{Subsubcase $(a.2.1)$} If $h > 0$, $(k+pn,p+2-k+pn) \in M_{pass5} \cap \mathcal{P}_{1,3} \subset move((x,y,1)) \cap \mathcal{P}$.\\
\underline{Subsubcase $(a.2.2)$} Suppose that $h = 0$. If $n^{\prime} = n+1$, then $(x,y,1)$ = $(k+pn,h+pn^{\prime},1)$ $ = (pn+2,pn+p,1) \in \mathcal{P}_{1,3}$. This contradicts the assumption that $(x,y,1) \notin \mathcal{P}_1$.
Therefore, we assume that $n^{\prime} > n+1$. Then, $(k+pn,p+2-k+pn) \in M_{pass5} \cap \mathcal{P}_{1,3} \subset move((x,y,1)) \cap \mathcal{P}$.\\
\underline{Subcase $(a.3)$} If $k = 1$, then $(k+pn,2-k+pn)=(1+pn,1+pn) \in M_{pass5} \cap \mathcal{P}_{1,2} \subset move((x,y,1)) \cap \mathcal{P}$.\\
\underline{Subcase $(a.4)$} Suppose that $k = 0$. We consider subsubcases $(a.4.1)$ and $(a.4.2)$.\\
\underline{Subsubcase $(a.4.1)$} If $n \geq 2$, then $(pn,2+p(n-1)) = (p+p(n-1),p+2-p+p(n-1),1) \in M_{pass5} \cap \mathcal{P}_{1,3} \subset move((x,y,1)) \cap \mathcal{P}$.\\
\underline{Subsubcase $(a.4.2)$} If $n =1$, then $(p,1,1) \in M_{pass5} \cap \mathcal{P}_{1,1} \subset move((x,y,1)) \cap \mathcal{P}$.\\
\underline{Case $(b)$} Let $(x,y,1)$ = $(k+pn^{\prime},k+pn,1) \notin \mathcal{P}_1$ such that $n^{\prime} > n$, $k,h \in \mathbb{Z}_{\geq 0}$, and $k,h \leq p-1$. Then, we use a method that is similar to the one used in $(a)$. \\
\underline{Case $(c)$} Let 
\begin{equation}\label{conditionpnpn}
(x,y,1) = (k+pn,h+pn,1) \in \mathcal{N}_1
\end{equation}
such that $k,h \in \mathbb{Z}_{\geq 0}$ and $k,h \leq p-1$. We consider Subcase $(c.1)$, $(c.2)$ and $(c.3)$.\\
\underline{Subcase $(c.1)$} Suppose that $k=0$. We consider subsubcases $(c.1.1)$ and $(c.1.2)$.\\
\underline{Subsubcase $(c.1.1)$} If $n \geq 2$, then $(pn,2+p(n-1)) = (p+p(n-1),p+2-p+p(n-1),1) \in M_{pass5} \cap \mathcal{P}_{1,3} \subset move((x,y,1)) \cap \mathcal{P}$.\\
\underline{Subsubcase $(c.1.2)$} If $n = 1$, then $(p,1,1) \in M_{pass5} \cap \mathcal{P}_{1,1} \subset move((x,y,1)) \cap \mathcal{P}$.\\
\underline{Subcase $(c.2)$} Suppose that $h=0$. Then, we use the method that is similar to the one used in $(c.1)$.\\
\underline{Subcase $(c.3)$} Suppose that $k,h \geq 1$. We consider subsubcases $(c.3.1)$, $(c.3.2)$, and $(c.3.3)$.\\
\underline{Subsubcase $(c.3.1)$} Let $k=1$. If $h=1$, then $(k+pn,h+pn,1) = (pn+1,pn+1) \in \mathcal{P}_{1,2}$. This contradicts 
(\ref{conditionpnpn}). We assume that $h \geq 2$. Then, $(pn+1,pn+1) \in M_{pass5} \cap \mathcal{P}_{1,2} \subset move((x,y,1)) \cap \mathcal{P}$.\\
\underline{Subsubcase $(c.3.2)$} If $h=1$, then we use a method that is similar to the one used in $(c.3.1)$.\\
\underline{Subsubcase $(c.3.3)$} Suppose that $k,h \geq 2$. We consider subsubsubcases $(c.3.3.1)$ and $(c.3.3.2)$.\\
\underline{Subsubsubcase $(c.3.3.1)$} If $k+h \leq p+1$, $(k-1)+(h-1) \leq p-1$. Then, $(1+pn,1+pn) \in M_{pass6} \cap \mathcal{P}_{1,2} \subset move((x,y,1)) \cap \mathcal{P}$.\\
\underline{Subsubsubcase $(c.3.3.2)$} If $k+h = p+2$, $(x,y,1) = (k+pn,h+pn,1) \in \mathcal{P}_{1,3}$. This contradicts (\ref{conditionpnpn})).
Therefore, 
we assume that $p+3 \leq k+h \leq 2p-2$. Since $1 \leq k+h -(p+2) \leq p-4$, there exist $u,v$ such that
$u,v \in \mathbb{Z}_{\geq 0}$, $1 \leq u+v \leq p-1$, $u \leq k$, $v \leq h$ and $(k-u)+(h-v) = p+2$.
Then, 
$(k-u+pn,h-v+pn) \in M_{pass6} \cap \mathcal{P}_{1,3} \subset move((x,y,1)) \cap \mathcal{P}$.\\
\underline{Case $(d)$} Let 
\begin{align}\label{casedeq}	
(x,y,1) = (k,h,1) \in \mathcal{N}_1
\end{align}
such that $k,h \in \mathbb{Z}_{\geq 0}$ and $k,h \leq p-1$. By (\ref{casedeq}), $(k,h,1) \notin \mathcal{P}_{1,1}$, and hence,
$k+h \neq p+1$.
We consider subcases $(d.1)$ and $(d.2)$.\\
\underline{Subcase $(d.1)$} 
If $k=0$ or $h=0$, then $(0,0,1) \in M_{pass4}$ or $M_{pass5}$. In other words, we reach $(0,0,1)$ by reducing the positive coordinate to zero. \\
\underline{Subcase $(d.2)$} 
We suppose that $k,h \geq 1$. We consider subsubcases $(d.2.1)$, $(d.2.2)$, and $(d.2.3)$.\\
\underline{Subsubcase $(d.2.1)$}
If $k+h \leq p-1$, then $(0,0,1) \in M_{pass6}$.\\
\underline{Subsubcase $(d.2.2)$}
If $k+h = p$, then $(k,h,0) \in M_{pass7} \cap \mathcal{P}_0$.\\
\underline{Subsubcase $(d.2.3)$}
We suppose that $p+1 \leq k+h \leq 2p-2$. By (\ref{casedeq}), $k+h \neq p+1$. Then, there exist $u,v \in \mathcal{N}$ such that $u \leq k$, $v \leq h$, $u+v \leq p-1$, and
$(k-u)+(h-v) = p+1$, and $(k-u,h-v) \in M_{pass6} \cap \mathcal{P}_{1,1}$\\
\underline{Case $(e)$} Let 
\begin{equation}\label{caseeeq}	
(x,y,1) = (k,h+np,1) \in \mathcal{N}_1
\end{equation}
such that $n \in \mathcal{N}$, $k,h \in \mathbb{Z}_{\geq 0}$ and $k,h \leq p-1$. 
We consider subcases $(e.1)$ and $(e.2)$.\\
\underline{Subcase $(e.1)$} 
If $k=0$, then $(0,0,1) \in M_{pass4}$.\\
\underline{Subcase $(e.2)$}
We suppose that $k \geq 1$. We consider subsubcase $(e.2.1)$ and $(e.2.2)$.\\
\underline{Subsubcase $(e.2.1)$}
If $n \geq 2$, then $2 \leq p-k+1 < h+np$. By reducing $h+np$ to $p-k+1$, we have $(k,p-k+1) \in M_{pass4} \cap \mathcal{P}_{1,1}$.\\
\underline{Subsubcase $(e.2.2)$}
We suppose that $n = 1$. We consider subsubsubcases $(e.2.2.1)$ and $(e.2.2.2)$.\\
\underline{Subsubsubcase $(e.2.2.1)$} 
If $h \geq 1$, then $p-k+1 < h+np = h+p$. By reducing $h+np$ to $p-k+1$, $(k,p-k+1) \in M_{pass4} \cap \mathcal{P}_{1,1}$.\\
\underline{Subsubsubcase $(e.2.2.2)$} 
We suppose that $h = 0$. If $k=1$, then $(k,h+np,1) = (1,p,1) \in \mathcal{P}_{1,1}$. This contradicts (\ref{caseeeq}).
Therefore, we assume that $k \geq 2$. Then, by reducing $k$ to $1$, $(1,p) \in M_{pass4} \cap \mathcal{P}_{1,1}$.\\
\underline{Case $(f)$} Let 
\begin{align*}\label{caseeeq2}	
(x,y,1) = (k+np,h,1) \in \mathcal{N}_1
\end{align*}

such that $n \in \mathcal{N}$, $k,h \in \mathbb{Z}_{\geq 0}$ and $k,h \leq p-1$. Then, we prove by the method that is similar to the one used in Case $(e)$.
\end{proof}

\begin{thm}\label{PNsetsforpassmove}
The sets $\mathcal{P}$ and $\mathcal{N}$ defined in Definition \ref{defofP0P1sets} are the sets of $\mathcal{P}$-positions and $\mathcal{N}$-positions of the Generalized Ry\={u}\={o} Nim for $p$ respectively.
\end{thm}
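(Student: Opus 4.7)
The plan is to verify that $\mathcal{P}$ satisfies the standard three defining properties of the set of $\mathcal{P}$-positions of an acyclic impartial game: (i) $\mathcal{P}$ contains all terminal positions, (ii) ${\rm move}(\mathbf{p}) \cap \mathcal{P} = \emptyset$ for every $\mathbf{p} \in \mathcal{P}$, and (iii) ${\rm move}(\mathbf{p}) \cap \mathcal{P} \neq \emptyset$ for every $\mathbf{p} \in \mathcal{N}$. The verification splits along the third coordinate into the $pass = 0$ slice and the $pass = 1$ slice, since the three preparatory lemmas already do the heavy lifting on each side.

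On the $pass = 0$ slice, the move sets $M_{pass1}, M_{pass2}, M_{pass3}$ each terminate with third coordinate $0$, so this slice is closed under play. The restriction of the game to it therefore coincides with the pass-free generalized Ry\={u}\={o} Nim of Section \ref{generalnim}, and Lemma \ref{ppositionp0} identifies $\mathcal{P}_0$ and $\mathcal{N}_0$ as its $\mathcal{P}$- and $\mathcal{N}$-positions. In particular, conditions (ii) and (iii) hold with respect to $\mathcal{P}_0$ on this slice, and the terminal position $(0,0,0)$ lies in $\mathcal{P}_0$. On the $pass = 1$ slice, Lemma \ref{fromPtononP} delivers condition (ii) directly: every $(x,y,1) \in \mathcal{P}_1$ satisfies ${\rm move}((x,y,1)) \cap \mathcal{P} = \emptyset$, i.e.\ neither the in-slice moves $M_{pass4}, M_{pass5}, M_{pass6}$ nor the cross-slice pass move $M_{pass7}$ hit $\mathcal{P}_0 \cup \mathcal{P}_1$. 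Lemma \ref{fromnonPtoP} supplies condition (iii): every $(x,y,1) \in \mathcal{N}_1$ has at least one move into $\mathcal{P}$. Combining the two slices yields the theorem.

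The main obstacle has been absorbed into the three auxiliary lemmas, and the top-level argument is essentially bookkeeping. The one genuine interaction between the slices, handled inside Lemma \ref{fromPtononP}, is verifying that the pass move from a $(x,y,1) \in \mathcal{P}_1$ never lands in $\mathcal{P}_0$; this follows because members of $\mathcal{P}_1$ satisfy $x + y \equiv 1$ or $2 \pmod p$ by Lemma \ref{lemmaforabc}(b),(c), while $\mathcal{P}_0$ requires $x + y \equiv 0 \pmod p$, so the two sets are disjoint modulo $p$ and the pass cannot accidentally cross from $\mathcal{P}_1$ into $\mathcal{P}_0$.
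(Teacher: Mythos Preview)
Your proposal is correct and follows essentially the same approach as the paper: both arguments reduce the theorem to the three preparatory lemmas (Lemma \ref{ppositionp0} for the $pass=0$ slice, Lemma \ref{fromPtononP} for closure of $\mathcal{P}_1$ under moves, and Lemma \ref{fromnonPtoP} for reachability of $\mathcal{P}$ from $\mathcal{N}_1$), with the top-level proof being bookkeeping. The only cosmetic difference is that you phrase it via the standard three defining conditions for $\mathcal{P}$-positions, while the paper phrases it as a direct game-play argument tracing an alternating sequence of moves to a terminal position; these are logically equivalent formulations.
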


\begin{proof}
Suppose that we start the game from a position $\{x,y,z,pass\}\in \mathcal{P}=\mathcal{P}_0 \cup \mathcal{P}_1$.
If $(x,y,pass)=(x,y,0) \in \mathcal{P}_0$, then by Lemma \ref{ppositionp0}, $move((x,y,pass)) \subset \mathcal{N}_0$.
If $(x,y,pass)=(x,y,1) \in \mathcal{P}_1$, then by Lemma \ref{fromPtononP}, $move((x,y,pass)) \subset \mathcal{N}$.
Therefore, any option we take leads to a position $ (x^{\prime},y^{\prime},z^{\prime},pass^{\prime}) \in \mathcal{N}$.
If $ (x^{\prime},y^{\prime},z^{\prime},pass^{\prime}) \in \mathcal{N}_0$, then by Lemma \ref{ppositionp0}, our opponent can choose a proper option that leads to a position in $\mathcal{P}_0$. If $ (x^{\prime},y^{\prime},z^{\prime},pass^{\prime}) \in \mathcal{N}_1$, then by Lemma \ref{fromnonPtoP}, our opponent can choose a proper option that leads to a position in $\mathcal{P}$.
Note that any option reduces some of the numbers in the coordinates. In this way, our opponent can always reach a position in $\mathcal{P}$, and will finally win by reaching $\{0,0,0\}$ or $\{0,0,1\} \in \mathcal{P}$. 
Therefore, $\mathcal{P}$ is the set of $\mathcal{P}$-position.
If we start the game from a position in $\mathcal{N}$, then Lemma \ref{ppositionp0} and Lemma \ref{fromnonPtoP} mean that we can choose a proper option that leads to a position in $ \mathcal{P}$. Any option from this position taken by our opponent leads to a position in $\mathcal{N}$. In this way, we win the game by reaching $\{0,0,0\}$ or $\{0,0,1\}$. Therefore, $\mathcal{N}$ is the set of $\mathcal{N}$-positions.
\end{proof}

\section{Generalized {\boldmath Ry\={u}\={o}} (dragon king) Nim that Restricted the Diagonal and Side Movement}

Restrict the diagonal movement by $p \in \mathbb{Z}_{>1}$ and side movement by $q \in \mathbb{Z}_{>1}$. It is possible to take up to a total of $p$ tokens when taking them at once and up to $q$ tokens when taking them from one heaps.

\begin{figure}[!htb]
\begin{center}
\includegraphics[width=0.45\columnwidth,bb=0 0 253 239]{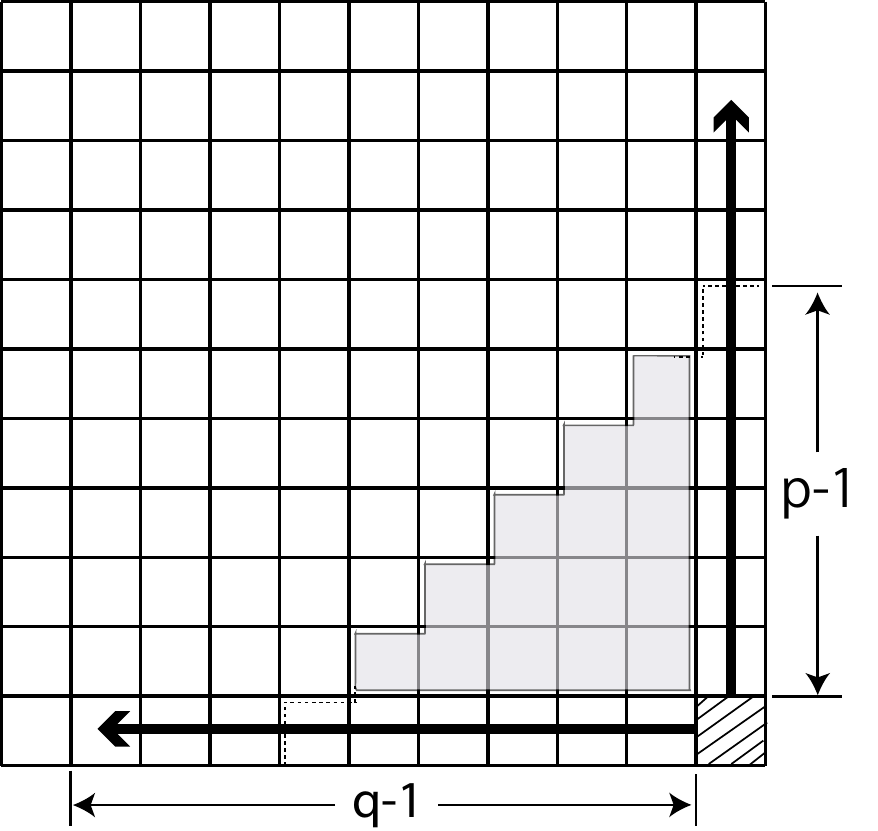}
\caption{Restricted the Diagonal and Side Movement (1)}
\label{suetsugu}
\end{center}
\end{figure}

In this case, Grundy Number is known only in the following cases:
\begin{thm}
If $\bmod(q,p) = 0$, then we have
\begin{align*}
\mathcal{G}(x,y)=\bmod(\bmod(x,q)+ \bmod(y,q), p)
+p(\lfloor\frac{\bmod(x,q)}{p}\rfloor\oplus\lfloor\frac{\bmod(y,q)}{p}\rfloor).
\end{align*}
\end{thm}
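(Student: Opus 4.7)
The plan is to prove this by strong induction on $x + y$, mirroring the structure of the proof of Theorem \ref{theofsufficientcondition}. Set $a := \bmod(x, q)$, $b := \bmod(y, q)$, and $F(a, b) := \bmod(a + b, p) + p(\lfloor a/p \rfloor \oplus \lfloor b/p \rfloor)$, so the claim is $\mathcal{G}(x, y) = F(a, b)$. Because $F$ depends only on the residues $(a, b) \in \{0, \ldots, q - 1\}^2$, and because the side restriction $s \leq q - 1$ never binds when $x, y < q$, the value $F(a, b)$ at the reduced position coincides with the Grundy of the unrestricted generalized Ry\={u}\={o} Nim with parameter $p$ at $(a, b)$, by Theorem \ref{theofsufficientcondition}.

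In the inductive step, let $S$ be the set of Grundy numbers reachable in one move from $(x, y)$ in the restricted game, and let $S_0$ be the set reachable in one move from $(a, b)$ in the unrestricted generalized Ry\={u}\={o} Nim. Theorem \ref{theofsufficientcondition} gives $\mathrm{mex}(S_0) = F(a, b)$, so $\{0, 1, \ldots, F(a, b) - 1\} \subseteq S_0$ and $F(a, b) \notin S_0$. It therefore suffices to prove the two containments $S_0 \subseteq S$ and $F(a, b) \notin S$: these together force $\mathrm{mex}(S) = F(a, b)$ and so $\mathcal{G}(x, y) = F(a, b)$ by the inductive hypothesis.

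For $S_0 \subseteq S$, each move of the unrestricted game at $(a, b)$ lifts to a move of the restricted game at $(x, y)$ landing on a position with the same residue pair. The side move $(a, b) \to (a - s, b)$ with $1 \leq s \leq a$ lifts to $(x, y) \to (x - s, y)$ because $s \leq a \leq x$ and $s \leq a \leq q - 1$; its Grundy is $F(a - s, b)$ by the inductive hypothesis. Side moves in $y$ and diagonal moves $(a, b) \to (a - s, b - t)$ with $s + t \leq p - 1$ lift in the same fashion, using $t \leq b \leq y$. For $F(a, b) \notin S$, take a move $(x, y) \to (x', y')$ with residues $(a', b')$, whose Grundy is $F(a', b')$ by the inductive hypothesis. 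If the move is a side move in $x$, then $b' = b$ and $a' \neq a$ (since $0 < s \leq q - 1$); because the Grundy values of the unrestricted Ry\={u}\={o} Nim are pairwise distinct along each row (no position of an impartial game shares its Grundy with a reachable position), $F(a', b) \neq F(a, b)$. Side moves in $y$ are symmetric. For a diagonal move, the divisibility $p \mid q$ gives $a' \equiv a - s \pmod p$ and $b' \equiv b - t \pmod p$, hence $a' + b' \equiv a + b - (s + t) \pmod p$; because $1 \leq s + t \leq p - 1$, the mod-$p$ parts of $F(a', b')$ and $F(a, b)$ differ, so the full values do too.

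The main obstacle I anticipate is the containment $F(a, b) \notin S$. When $x \geq q$, side moves can ``wrap around'' and reach residues $a' > a$, and diagonal moves can similarly land on positions with $a' > a$ or $b' > b$; these extra reachable positions have no counterpart in $S_0$, so the proof must verify directly that none of them collides with the target Grundy $F(a, b)$. Row and column distinctness handle the side wrap-arounds, and the divisibility $p \mid q$ handles the diagonals via the congruence $a' + b' \equiv a + b - (s + t) \pmod p$; this is precisely where the hypothesis $\bmod(q, p) = 0$ is used.
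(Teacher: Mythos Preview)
The paper states this theorem without proof, so there is nothing to compare your argument against directly. That said, your proposal is correct and is the natural approach: reduce to Theorem~\ref{theofsufficientcondition} via the residue map $(x,y)\mapsto(a,b)=(\bmod(x,q),\bmod(y,q))$, then verify by induction that the extra positions reachable when $x\geq q$ or $y\geq q$ do not introduce the value $F(a,b)$ into the option set.

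Both key steps are sound. For $S_0\subseteq S$, your lifting argument is fine because $a\leq x$, $b\leq y$, and every unrestricted move at $(a,b)$ satisfies $s\leq a\leq q-1$, so it is also legal in the restricted game. For $F(a,b)\notin S$, the side-move case is justified by the observation that $F(\cdot,b)$ is injective on $\{0,\ldots,q-1\}$ (in the unrestricted game any two positions in the same row are comparable by a rook move, hence have distinct Grundy values), and the diagonal case uses exactly the hypothesis $p\mid q$ to pass the congruence $a'\equiv a-s\pmod q$ down to a congruence modulo $p$. This is indeed the only place the divisibility hypothesis is needed, and your identification of it as the crux is accurate.

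Two small points you may wish to make explicit when writing this up: the base case $(x,y)=(0,0)$, and the fact that the ``row injectivity'' argument covers both $a'<a$ and the wrap-around case $a'>a$ (since from $(a',b)$ one can reach $(a,b)$ in the unrestricted game).
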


\begin{thm}
If $\bmod(q,p) = 1$, then we have

\begin{align*}
\mathcal{G}(x,y) = \begin{cases}
q \ \ \ \ \ (where \bmod(x,q) = 0, \bmod(y,q) = 0, x\neq 0, y\neq 0) \\
\bmod(\bmod(x,q)+\bmod(y,q),p)
+p(\lfloor\frac{\bmod(x,q)}{p}\rfloor\oplus\lfloor\frac{\bmod(y,q)}{p}\rfloor). \\
\hspace{80mm} (otherwise)
\end{cases}
\end{align*}

\end{thm}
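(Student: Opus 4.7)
The plan is to follow the inductive schema of Theorem~\ref{theofsufficientcondition}: write $(x,y) = (aq + X, bq + Y)$ with $X, Y \in \{0, 1, \dots, q-1\}$ and $a,b \in \mathbb{Z}_{\geq 0}$, and use $\tilde g(X,Y) := \bmod(X+Y,p) + p(\lfloor X/p \rfloor \oplus \lfloor Y/p \rfloor)$ to denote the unrestricted Grundy value. Since $\bmod(q,p) = 1$, write $q = kp + 1$ with $k \geq 1$. The claim splits as \emph{Case A} ($X = Y = 0$ and $a, b \geq 1$, target Grundy $q$) and \emph{Case B} (otherwise, target Grundy $\tilde g(X,Y)$). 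I proceed by strong induction on $x + y$.

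For Case~A, I would enumerate the move set of $(aq, bq)$: horizontal and vertical moves land in Case~B and, by the inductive hypothesis, contribute the full set $\{1, 2, \dots, q-1\}$ of Grundy values (each $\tilde g(q-s, 0) = q-s$); the diagonal move $(s,t) = (1,1)$ contributes $\tilde g(q-1, q-1) = 0$. The remaining work is to check that no reachable position has Grundy exactly $q = kp + 1$. Using $q \equiv 1 \pmod p$, the diagonal Grundy $\tilde g(q-s, q-t)$ has $\bmod p$ component $\bmod(2-s-t, p)$ and nim-sum component in $\{0,\, k \oplus (k-1)\}$; an algebraic check (it would force $t \equiv 0 \pmod p$) rules out equality with $kp+1$. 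Hence $\mathrm{mex} = q$.

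For Case~B, I would show (i) $\tilde g(X,Y)$ is absent from the reachable Grundy set and (ii) every value below $\tilde g(X,Y)$ appears. The key observation is that reachable positions fall into Case~A in only three sub-scenarios: horizontal with $Y = 0$, $s = X$ and $a, b \geq 1$; vertical with $X = 0$, $t = Y$ and $a, b \geq 1$; diagonal with $s = X$, $t = Y$, $X + Y \leq p-1$, and $a, b \geq 1$. In each of these a short check gives $\tilde g(X,Y) \neq q$. For Case~B landings, the ``non-wrap'' moves (with $s \leq X$ and $t \leq Y$) exactly replicate the unrestricted moves from $(X,Y)$, so by Theorem~\ref{theofsufficientcondition} their Grundy values avoid $\tilde g(X,Y)$ and cover $\{0, 1, \dots, \tilde g(X,Y) - 1\}$; the ``wrap-around'' moves shift some coordinate by $q \equiv 1 \pmod p$, so their $\bmod p$ component is shifted away from $\bmod(X+Y, p)$ and hence also avoids $\tilde g(X,Y)$. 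The only damage is that in the three sub-scenarios above the Grundy value $0$ is replaced by $q$; in each case the lost $0$ is recovered by an explicit wrap-around move whose existence uses $q = kp + 1$---for instance, in the diagonal sub-scenario the horizontal wrap with $s = q - p + X + Y$ lands at a Case~B position $(p-Y, Y)$ with $\tilde g(p - Y, Y) = 0$.

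The hardest step will be (ii), specifically verifying the recovery in each of the three Case~A sub-scenarios and its edge cases ($a = 1$, $b = 1$, $X$ or $Y$ near $0$ or $p$, $p \mid X$, etc.), together with confirming that the wrap-around moves never themselves equal $\tilde g(X,Y)$---this is exactly where the hypothesis $\bmod(q,p) = 1$ is used: it guarantees that the modular equations in the recovery moves have solutions inside $[1, q-1]$ and that the shift $1 - s - t$ produced by wraps is never $\equiv 0 \pmod p$.
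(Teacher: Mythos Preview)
The paper does not prove this theorem. Section~4 simply states the Grundy formulas for the restricted game in the two tractable cases $q\equiv 0\pmod p$ and $q\equiv 1\pmod p$, with no accompanying argument, and then remarks that ``in other case, it becomes complicated and generally difficult.'' So there is no proof in the paper against which to compare your attempt.

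On the merits, your inductive scheme is the natural one and appears sound in outline. One claim needs tightening: you assert that horizontal and vertical wrap-around moves always shift the $\bmod p$ component of the Grundy value, but a horizontal wrap with $s\equiv 1\pmod p$ (and there are such $s$ in $\{X+1,\dots,q-1\}$) sends $X$ to $X'=X+(k-j)p$ for some $0\le j<k$, so $\bmod(X'+Y,p)=\bmod(X+Y,p)$ is unchanged. What actually saves you is that such a move adds a strictly positive multiple of $p$ to the residue, hence $\lfloor X'/p\rfloor\neq\lfloor X/p\rfloor$ and the nim-sum factor changes; this should be argued explicitly rather than folded into the modular claim. Beyond that, the three ``recovery of $0$'' sub-scenarios need to be written out in full (verifying in each that the replacement move is legal, lands in Case~B with both residues nonzero, and that the required $a\ge 1$ or $b\ge 1$ is actually available), but none of them appears to conceal a genuine obstruction.
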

In other case, it becomes complicated and generally difficult.\\

Next, instead of the restiriction of side movement by $q \in \mathbb{Z}_{>1}$, we consider the restriction of horizontal movement by $q \in \mathbb{Z}_{>1}$ and the restriction of vertical movement by $r \in \mathbb{Z}_{>1}$.


\begin{figure}[!htb]
\begin{center}
\includegraphics[width=0.45\columnwidth,bb=0 0 276 239]{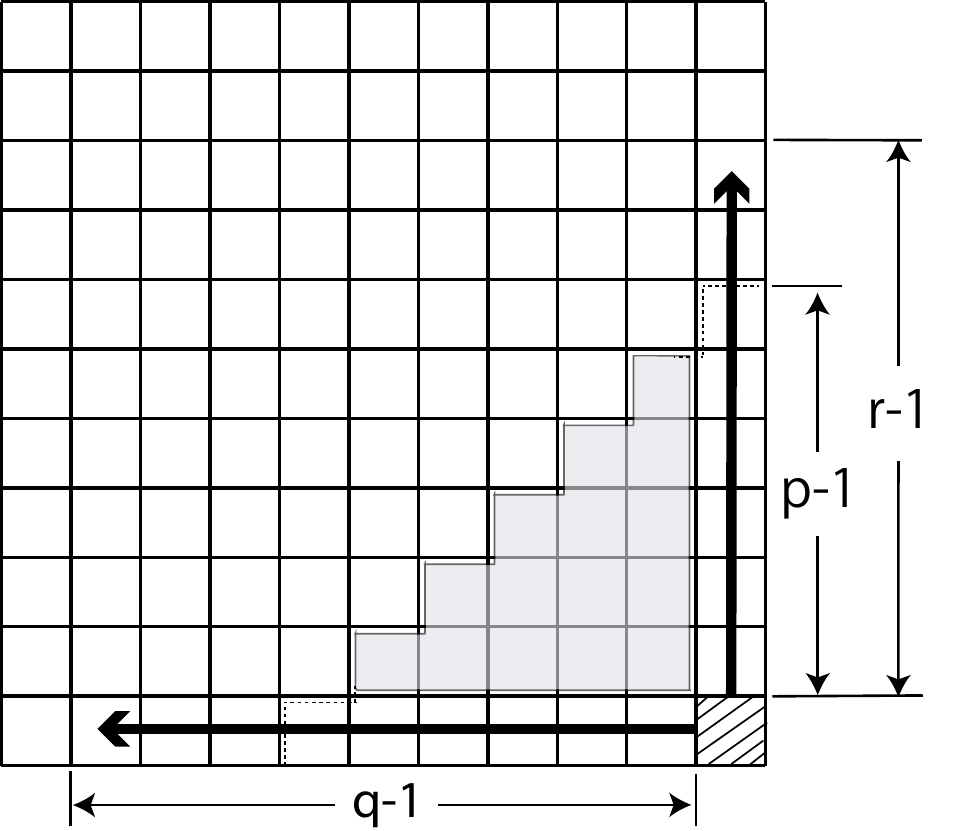}
\caption{Restricted the Diagonal and Side Movement (2)}
\label{suetsugu1}
\end{center}
\end{figure}

\begin{thm}
If $\bmod(q,p) = 0$ \ and \ $\bmod(r,p) = 0$, then we have
\begin{align*}
\mathcal{G}(x,y)=\bmod(\bmod(x,q)+\bmod(y,r),p)
+p(\lfloor\frac{\bmod(x,q)}{p}\rfloor\oplus\lfloor\frac{\bmod(y,r)}{p}\rfloor).
\end{align*}
\end{thm}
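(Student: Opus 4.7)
The plan is to prove the formula by strong induction on $x+y$, closely following the proof of Theorem \ref{theofsufficientcondition}. Write $x = q\alpha + \tilde{x}$ with $0 \le \tilde{x} < q$ and $y = r\beta + \tilde{y}$ with $0 \le \tilde{y} < r$, and further decompose $\tilde{x} = pk + v$ and $\tilde{y} = ph + w$ with $0 \le v,w < p$. Since $\bmod(q,p)=0$ and $\bmod(r,p)=0$, one has $\lfloor \bmod(x,q)/p\rfloor = k$ and $\lfloor \bmod(y,r)/p\rfloor = h$, so the claim reduces to $\mathcal{G}((x,y)) = T$, where $T := \bmod(v+w, p) + p(k \oplus h)$.

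I would first enumerate the move set: horizontal moves $(x-a, y)$ for $1 \le a \le \min(x, q-1)$, vertical moves $(x, y-b)$ for $1 \le b \le \min(y, r-1)$, and diagonal moves $(x-s, y-t)$ with $s,t \ge 1$, $s \le x$, $t \le y$, and $s+t \le p-1$. By the induction hypothesis the Grundy number of each reachable position $(x',y')$ equals the generalized Ry\={u}\={o} Grundy value of the block-representative $(\bmod(x',q), \bmod(y',r))$ given by Theorem \ref{theofsufficientcondition}. The proof then splits in the same way as in Theorem \ref{theofsufficientcondition}: show that (a) every $e$ with $0 \le e < T$ is the Grundy value of some reachable position, and (b) $T$ itself is not reached.

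For (a), the in-block moves (those with $a \le \tilde{x}$, $b \le \tilde{y}$, or diagonal with $s \le \tilde{x}$ and $t \le \tilde{y}$) mimic exactly the moves considered in Theorem \ref{theofsufficientcondition} from $(\tilde{x}, \tilde{y})$, so they reproduce the set $C_{k,h,v,w}$ of Lemma \ref{lemmaforgrundyof}. When $\alpha \ge 1$ (resp.\ $\beta \ge 1$), horizontal moves with $a > \tilde{x}$ (resp.\ vertical moves with $b > \tilde{y}$) wrap modulo $q$ (resp.\ $r$); because $p \mid q$ (resp.\ $p \mid r$), these long moves recover the remaining values in $A_{k,h}$ from Lemma \ref{nimby3}, exactly as in equation (\ref{setofG1}). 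Hence the reachable Grundy set contains $A_{k,h} \cup C_{k,h,v,w}$, which by Lemmas \ref{nimby3} and \ref{lemmaforgrundyof} contains every value below $T$. For (b), horizontal and vertical moves land on positions whose block-residue differs from $(\tilde{x}, \tilde{y})$ in the appropriate coordinate, so matching the Grundy value to $T$ would force $\bmod(x-a,q) = \tilde{x}$ (resp.\ $\bmod(y-b,r) = \tilde{y}$), a contradiction; in-block diagonal moves are ruled out directly by Lemma \ref{lemmaforgrundyof}(a).

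The main technical obstacle is the boundary-crossing diagonal case, where $s > \tilde{x}$ or $t > \tilde{y}$. The divisibility $p \mid q$ is crucial here: together with $s \le p-1 < p \le q$ it yields $\bmod(x-s,q) = q + \tilde{x} - s = p(q/p-1) + (p+\tilde{x}-s)$, so $\lfloor \bmod(x-s,q)/p\rfloor = q/p-1$ rather than $k$, and $\bmod(\bmod(x-s,q),p) = p+\tilde{x}-s$. Moreover $s > \tilde{x}$ together with $s \le p-1$ forces $\tilde{x} \le p-2$, hence $k=0$ and $v=\tilde{x}$, and a finite case analysis (on whether $t$ also crosses the $y$-block boundary and on the bit pattern of $h$ relative to $q/p-1$ and $r/p-1$) shows that neither the nim-sum part nor the residue part can simultaneously match $T$. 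Once this non-coincidence is established in every sub-case, an application of Lemma \ref{lemmaformex} exactly as in the proof of Theorem \ref{theofsufficientcondition} closes the induction and gives $\mathcal{G}((x,y)) = T$.
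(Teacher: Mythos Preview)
The paper states this theorem without proof, so there is no argument to compare against.  Your inductive reduction---writing $(\bmod(x,q),\bmod(y,r))=(\tilde x,\tilde y)=(pk+v,ph+w)$ and reusing the machinery behind Theorem~\ref{theofsufficientcondition}---is sound and is the natural route.

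One step is simpler than you make it.  You flag the boundary-crossing diagonal moves ($s>\tilde x$ or $t>\tilde y$) as the main obstacle and propose a case analysis on the bit patterns of $h$, $q/p-1$ and $r/p-1$.  No such analysis is needed: the hypothesis $p\mid q$ already gives
\[
\bmod(\bmod(x-s,q),p)=\bmod(\tilde x-s,p)
\]
regardless of whether $s\le\tilde x$ or $s>\tilde x$ (in the latter case $\bmod(x-s,q)=q+\tilde x-s\equiv\tilde x-s\pmod p$), and $p\mid r$ does the same for the second coordinate.  Hence the residue part of the Grundy value of \emph{every} diagonal option equals $\bmod(v+w-(s+t),p)$, which can never equal $\bmod(v+w,p)$ since $1\le s+t\le p-1$.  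So no diagonal move can produce $T$, and the nim-sum component never has to be examined.  With this observation the exclusion of $T$ is immediate for all three move types (for horizontal and vertical moves, $\bmod(x-a,q)\ne\tilde x$ for $1\le a\le q-1$ forces the block representative to change), and Lemma~\ref{lemmaformex} applied to $A_{k,h}\cup C_{k,h,v,w}$ closes the induction exactly as in the proof of Theorem~\ref{theofsufficientcondition}.
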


\section{n-dimensional Ry\={u}\={o} Nim}

\begin{defn}[$3$-dimensitonal Ry\={u}\={o} Nim]
$3$-dimensional Ry\={u}\={o} Nim is an impartial game with three heaps of tokens. The rules are as follows:
\begin{itemize}
\item\ The legal move is to remove any number of tokens from a single heap (as in Nim) or 
\item\ remove one token from any two heaps or remove one token from all the three heaps.
\item\ The end position is the state of no tokens in the three heaps.
\end{itemize}
\end{defn}

We could not get the indication of Grundy Number for 3-dimensional Ry\={u}\={o} Nim but we get the $\mathcal{P}$-positions as shown in this theorem.

\begin{thm}
Let $(x,y,z)$ be a 3-dimensional Ry\={u}\={o} Nim position. 
The $\mathcal{P}$-positions of 3-dimensional Ry\={u}\={o} Nim are given as follows:

$\bmod(x+y+z,3) = 0$, and moreover
\begin{itemize}
\item[(A)] If $\bmod(x,3) = \bmod(y,3) = \bmod(z,3) = 1$ then
\begin{align*}
\lfloor \frac{x}{3} \rfloor \oplus \lfloor \frac{y}{3} \rfloor \oplus \lfloor \frac{z}{3} \rfloor \oplus 1 = 0
\end{align*}
\item[(B)] Otherwise
\begin{align*}
\lfloor \frac{x}{3} \rfloor \oplus \lfloor \frac{y}{3} \rfloor \oplus \lfloor \frac{z}{3} \rfloor =0.
\end{align*}
\end{itemize}
\end{thm}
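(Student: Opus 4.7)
The plan is to show the set $\mathcal{P}$ described in the theorem is exactly the set of $\mathcal{P}$-positions of the 3-dimensional Ry\={u}\={o} Nim by verifying the two standard closure properties: every move from $\mathcal{P}$ lands outside $\mathcal{P}$, and from every point of the complement some move reaches $\mathcal{P}$. Write each coordinate in the form $x = 3a+\alpha$, $y = 3b+\beta$, $z = 3c+\gamma$ with $\alpha,\beta,\gamma \in \{0,1,2\}$. The divisibility condition $\bmod(x+y+z,3)=0$ forces $(\alpha,\beta,\gamma)$ into one of four residue families: $(0,0,0)$, $(1,1,1)$, $(2,2,2)$, and the six permutations of $(0,1,2)$. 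Membership in $\mathcal{P}$ then demands $a\oplus b\oplus c = 0$ in the three families other than $(1,1,1)$, and $a\oplus b\oplus c = 1$ in the $(1,1,1)$ family.

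For the closure direction I will run through the three move types against the four residue families. A single-pile move preserving the residue of the moved pile forces the corresponding quotient to strictly decrease, so the Nim-sum $a\oplus b\oplus c$ changes and the required XOR condition fails; a single-pile move that changes that residue shifts $\alpha+\beta+\gamma \pmod 3$ by a nonzero amount, so either the new triple falls outside every admissible family, or it lies in a different family where a quick Nim-sum check rules it out. A double-pile move shifts the residue sum by $-2 \pmod 3$, so $\mathcal{P}$ is left automatically. A triple-pile move preserves the mod-$3$ sum but rotates families: $(1,1,1)\mapsto(0,0,0)$ and $(2,2,2)\mapsto(1,1,1)$ keep the quotients fixed while toggling the required XOR between $0$ and $1$; the permutation family $(0,1,2)$ maps to another permutation but decreases exactly one quotient (the one whose residue was $0$); and $(0,0,0)\mapsto(2,2,2)$ decreases every quotient by one, which I handle by the elementary fact that $a\oplus b\oplus c = 0$ with $a,b,c\geq 1$ forces $(a-1)\oplus(b-1)\oplus(c-1)\neq 0$, proved by tracking the lowest bit.

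For reachability I partition the N-positions according to whether $\alpha+\beta+\gamma\equiv 0\pmod 3$. When the residue family is already correct but the Nim-sum is wrong, I apply a classical Nim-type single-pile move: set $s = a\oplus b\oplus c$ outside the $(1,1,1)$ family and $s = a\oplus b\oplus c \oplus 1$ inside it, pick a pile whose quotient contains the highest bit of $s$, and reduce that quotient to $a_i\oplus s$ while holding its residue fixed. The single awkward case $s=1$ inside the $(1,1,1)$ family is absorbed by the three-pile move, which brings the residues to $(0,0,0)$ without altering the quotients. When the residue sum is $1$ or $2$ modulo $3$, I use a single-pile move that simultaneously adjusts a residue and a quotient: this is a case analysis over the nine bad residue patterns, in each case choosing a pile and target residue so that the new triple lies in one of the admissible families, and then solving the corresponding Nim-sum equation for the new quotient.

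The main obstacle is this last step, namely verifying in every branch of the residue-changing case that the required new quotient actually lies in the range permitted by a legal reduction. The natural choice is the pile whose residue needs to stay the same or increase (giving the widest set of achievable new quotients), but the Nim-sum equation might still force an out-of-range value. I expect to handle this by showing that when the primary pile fails, a different pile always supplies an in-range target, through a comparison of the binary representations of $a$, $b$, $c$ that exploits the failure of the original Nim-type condition.
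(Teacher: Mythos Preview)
The paper states this theorem without proof, so there is nothing to compare your argument against; I can only evaluate your outline on its own.

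Your closure direction is sound: the move-type analysis is complete, and the one nontrivial step --- that $a\oplus b\oplus c=0$ with $a,b,c\ge 1$ forces $(a-1)\oplus(b-1)\oplus(c-1)\neq 0$ --- is correct for exactly the parity reason you give.

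The reachability direction has a genuine gap. You assert that whenever $\alpha+\beta+\gamma\not\equiv 0\pmod 3$ a \emph{single-pile} move always reaches $\mathcal{P}$, and your fallback plan (``when the primary pile fails, a different pile always supplies an in-range target'') still restricts you to single-pile moves. This is false for the residue pattern $(1,1,0)$ and its permutations. From a position $(3a+1,\,3b+1,\,3c)$ the only single-pile moves that land in an admissible residue family require $b\oplus c<a$, or $a\oplus c<b$, or --- moving pile~3 into the $(1,1,1)$ family, where the target XOR is $1$ --- $a\oplus b\oplus 1<c$; none of these inequalities need hold. Concretely, from $(1,1,0)$ (that is, $a=b=c=0$) every single-pile move stays in $\mathcal{N}$ and the unique winning option is the double-pile move to $(0,0,0)$; from $(1,1,3)$ (that is, $a=b=0$, $c=1$) no single-pile move reaches $\mathcal{P}$, and the only winning options are the double-pile moves to $(0,1,2)$ or $(1,0,2)$. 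Hence the $(1,1,0)$ branch genuinely requires two-pile moves, and your case analysis must be widened to include them before the argument can close.
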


\begin{defn}[Modified $3$-dimensional Ry\={u}\={o} Nim]
 We also consider the modified rule of 3-dimensional Ry\={u}\={o} Nim as follows:
\begin{itemize}
\item\ The legal move is to remove any number of tokens from a single heap (as in Nim) or 
\item\ remove one token from any two heaps or 
\item\ \sout{remove one token from all the three heaps}.
\item The end position is the state of no tokens in the three heaps. 
\end{itemize}
\end{defn}

Then, we can obtain the Grundy Number of this game as follows:
\begin{thm}
Let $(x,y,z)$ be a Modified $3$-dimensional Ry\={u}\={o} Nim position, then we have

\begin{align*}
\mathcal{G}(x,y,z)=\bmod(x+y+z,3)+3(\lfloor \frac{x}{3} \rfloor \oplus \lfloor \frac{y}{3} \rfloor \oplus \lfloor \frac{z}{3} \rfloor).
\end{align*}
\end{thm}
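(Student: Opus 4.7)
The plan is to proceed by strong induction on $x+y+z$. Write $x = 3k+u$, $y = 3h+v$, $z = 3\ell+w$ with $u,v,w \in \{0,1,2\}$, set $N = k \oplus h \oplus \ell$ and $s = \bmod(u+v+w,3)$, and denote the target Grundy value by $T = 3N + s$. The base case $(0,0,0)$ matches trivially. For the inductive step I would compute, via the induction hypothesis, the Grundy values of every position reachable from $(x,y,z)$, then show that their ${\rm mex}$ equals $T$.

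First I would handle the single-heap moves. A move that reduces heap $i$ and strictly decreases $\lfloor x_i/3 \rfloor$ produces, by induction, a Grundy value of the form $3q + r$ where $q$ is obtained by replacing one of $k,h,\ell$ with a strictly smaller value and $r$ ranges over $\{0,1,2\}$. By the three-pile version of Lemma \ref{lemmafornimsum} (the standard Sprague--Grundy identity for ordinary Nim) the set of such $q$ omits exactly $N$, so these moves contribute precisely $\{3q + r : q \neq N,\ 0 \leq r \leq 2\}$, playing the role of $A_{k,h}$ in Lemma \ref{nimby3} but with three indices. The single-heap moves that change only the mod-$3$ residue of some heap contribute, at high part $N$, the set of low residues
\[
S_1 = \{\bmod(v+w+i,3) : 0\le i< u\} \cup \{\bmod(u+w+j,3) : 0\le j< v\} \cup \{\bmod(u+v+m,3) : 0\le m< w\}.
\]

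Next I would analyze the two-heap moves. Whether or not the two decremented coordinates have zero residues (which would force a ``borrow'' that changes the high part), the coordinate sum always changes by $-2$, so $x' + y' + z' \equiv s+1 \pmod 3$ and every two-heap move produces a Grundy value with low residue $\bmod(s+1,3) \neq s$. Combined with the single-heap analysis, this shows $T$ is not a reachable Grundy value, yielding the analogue of Lemma \ref{lemmaforgrundyof}(a).

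To finish, every value below $T$ must be reachable. Values $3q + r$ with $q < N$ are handled by the single-heap high-reducing moves exactly as in Lemma \ref{nimby3}. It remains to reach every $3N + r$ with $0 \leq r < s$. The case $s=0$ is vacuous. For $s = 1$, a finite enumeration over the subcases $(u,v,w)\in\{0,1,2\}^3$ with $u+v+w \equiv 1 \pmod 3$ shows $0 \in S_1$ in each, so a single-heap low-only move suffices. For $s = 2$ the analogous enumeration shows $1 \in S_1$ always; the only subcases in which $0 \notin S_1$ are $(u,v,w) \in \{(1,1,0),(1,0,1),(0,1,1)\}$, and in each of these the unique two-heap move acting on the two positive coordinates incurs no borrow, hence preserves the high part $N$ and supplies the missing residue $\bmod(s+1,3) = 0$. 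The main obstacle is the case-by-case bookkeeping in this final step: roughly a dozen subcases of $(u,v,w)$, and for each the need to track which two-heap moves avoid a borrow and therefore keep the high part equal to $N$. No new idea beyond the lemmas of Section \ref{generalnim} is required, but one must verify carefully that a zero residue forces any two-heap move affecting that coordinate to borrow, so that in the pinch-point subcases $(1,1,0),(1,0,1),(0,1,1)$ the reachability of low residue $0$ at high part $N$ depends entirely on the single surviving borrow-free two-heap move.
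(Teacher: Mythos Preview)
Your argument is correct. The paper does not actually supply a proof for this theorem (nor for the $n$-dimensional generalization that follows it); it only states the result. Your proposal is the natural three-pile analogue of the paper's two-dimensional proof in Section~\ref{generalnim}, using the same decomposition into ``high part'' $N=k\oplus h\oplus\ell$ and ``low residue'' $s$, and the same two-step mex verification (avoidance of $T$, then coverage of $\{0,\ldots,T-1\}$) modelled on Lemmas \ref{nimby3} and \ref{lemmaforgrundyof}. The one genuinely new ingredient relative to the two-dimensional case is your observation that every two-heap move shifts the coordinate sum by $-2$, forcing low residue $\bmod(s+1,3)\neq s$; this is what replaces the diagonal-move portion of $C_{k,h,v,w}$ and is exactly the right idea. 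Your finite enumeration of the $(u,v,w)$ residues for $s\in\{1,2\}$ is complete and the pinch-point cases $(1,1,0),(1,0,1),(0,1,1)$ are handled correctly: the unique borrow-free two-heap move there has both affected residues $\geq 1$, so it exists and preserves $N$ while supplying residue~$0$.
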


Next, we also consider $n$-dimensional Ry\={u}\={o} Nim.

\begin{defn}[$n$-dimensional Ry\={u}\={o} Nim]
$n$-dimensional Ry\={u}\={o} Nim is an impartial game with $n$ heaps of tokens. The rules are as follows:
\begin{itemize}
\item The legal move is to remove any number of tokens from a single heap (as in Nim) or
\item 
The total number of tokens removed from the $k \in \mathbb{Z} (1<k\leq n)$ heaps at once must be less than $p \in \mathbb{Z}_{>1}$.
\item 
The end position is the state of no tokens in the $n$ heaps.
\end{itemize}
\end{defn}

\begin{thm}
Let $(x_1,\ldots,x_n)$ be a $n$-dimensional Ry\={u}\={o} Nim position, then we have
\begin{align*}
\mathcal{G}(x_1,\ldots,x_n)= \bmod(x_1+\cdots+x_n,p)+p(\lfloor \frac{x_1}{p} \rfloor \oplus \cdots \oplus \lfloor \frac{x_n}{p} \rfloor). \ (p \in \mathbb{Z}_{>1}).
\end{align*}
\end{thm}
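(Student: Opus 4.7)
The plan is to prove the formula by strong induction on $x_1 + \cdots + x_n$, following the template of the proof of Theorem \ref{theofsufficientcondition}. Write $x_i = pk_i + v_i$ with $k_i \in \mathbb{Z}_{\geq 0}$ and $0 \leq v_i \leq p-1$, and set $K = k_1 \oplus \cdots \oplus k_n$ and $V = v_1 + \cdots + v_n$, so that the target value is $M := \bmod(V,p) + pK$. Assuming the formula holds for all positions with smaller coordinate sum, the task is to compute $\mathrm{mex}\{\mathcal{G}(\mathbf{h}) : \mathbf{h} \in \mathrm{move}((x_1,\ldots,x_n))\}$ and show that it equals $M$.

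Next I would partition $\mathrm{move}((x_1,\ldots,x_n))$ into four families and apply the inductive hypothesis to each. Single-heap moves on coordinate $i$ split into \emph{within-block} ones ($x_i \to pk_i + v_i'$ with $0 \leq v_i' < v_i$) and \emph{cross-block} ones ($x_i \to pk_i' + u$ with $0 \leq k_i' < k_i$ and $0 \leq u \leq p-1$). Multi-heap moves, in which a subset $S$ with $|S| \geq 2$ loses $s_i \geq 1$ tokens from each $i \in S$ subject to $\sum_{i \in S} s_i \leq p-1$, similarly split into within-block ones (all $s_i \leq v_i$) and cross-block ones (some $s_i > v_i$, which forces those $k_i$ to drop by exactly $1$ since $s_i \leq p-1$). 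Let $A$ be the set of Grundy values arising from single-heap cross-block moves; by the inductive hypothesis, $A = \{pK'' + u : 0 \leq u \leq p-1, \ K'' = k_1 \oplus \cdots \oplus k_i' \oplus \cdots \oplus k_n \text{ for some } i \text{ and some } 0 \leq k_i' < k_i\}$. Let $C$ be the set of Grundy values arising from the remaining three families.

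The standard $n$-heap extension of Lemma \ref{lemmafornimsum} gives $\mathrm{mex}\{k_1 \oplus \cdots \oplus k_i' \oplus \cdots \oplus k_n : 1 \leq i \leq n, \ 0 \leq k_i' < k_i\} = K$, which together with the freedom to sweep $u \in \{0,\ldots,p-1\}$ yields the $n$-dimensional analogue of Lemma \ref{nimby3}: $\mathrm{mex}(A \cup \{pK + u : 0 \leq u < v\}) = pK + v$ for every $0 \leq v \leq p-1$. It then suffices to prove the $n$-dimensional analogue of Lemma \ref{lemmaforgrundyof}, namely that $C \supseteq \{pK + u : 0 \leq u < \bmod(V,p)\}$ and $M \notin C$, and then to invoke Lemma \ref{lemmaformex} to conclude $\mathrm{mex}(A \cup C) = M$. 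For the inclusion, given $u$ in this range, set $r := \bmod(V,p) - u \in \{1,\ldots,p-1\}$ and decompose $r = s_1 + \cdots + s_n$ greedily with each $s_i \leq v_i$ (possible because $r \leq \bmod(V,p) \leq V$); if exactly one $s_i$ is nonzero this is a single-heap within-block move, otherwise it is a multi-heap within-block move, and in either case it realizes $pK + u$. That $M \notin C$ uses the observation that every move accounted for by $C$ shifts $V$ modulo $p$ by $-\sum s_i$, where $\sum s_i \in \{1,\ldots,p-1\}$ is never $0 \pmod p$.

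The main obstacle I anticipate lies in the multi-heap cross-block subcase: when some $s_i > v_i$ so that $k_i \to k_i - 1$, the new nim-sum $K'$ need not differ from $K$, and one might worry that $M$ could slip into $C$ through such a move. The key observation is that each block crossing in coordinate $i$ contributes $p + v_i - s_i$ to the new $v_i'$, an increment of $p$ which vanishes modulo $p$, so the net change in the residue $\bmod(V,p)$ is still simply $-\sum s_i \not\equiv 0 \pmod p$. Getting this bookkeeping right across all four move families, and confirming that the greedy decomposition is always realizable as a legal single-heap or multi-heap within-block move, is where the care is required; once those are in place, the argument descends directly from the two-dimensional proof.
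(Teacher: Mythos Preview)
The paper states this theorem without proof, so there is no argument of its own to compare against. Your proposal is correct and is precisely the $n$-dimensional lift of the paper's proof of Theorem \ref{theofsufficientcondition}: the single-heap cross-block moves furnish the analogue of $A_{k,h}$ (with the $n$-pile version of Lemma \ref{lemmafornimsum} replacing the $2$-pile one), the remaining three move families furnish the analogue of $C_{k,h,v,w}$, and Lemmas \ref{nimby3}, \ref{lemmaforgrundyof}, and \ref{lemmaformex} go through verbatim once restated for $n$ piles. The multi-heap cross-block worry you flag---that several $k_i$ may drop by $1$ and leave the nim-sum $K'$ accidentally equal to $K$---is real (for instance $k_1=3$, $k_2=1$ both decreasing by $1$ preserves $k_1\oplus k_2=2$), but your resolution via the residue is exactly what Lemma \ref{lemmaforgrundyof}(a) already does implicitly in two dimensions: since $\sum s_i\in\{1,\ldots,p-1\}$, the residue $\bmod(V,p)$ necessarily changes, so $M$ cannot appear in $C$ regardless of what happens to the nim-sum. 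Your greedy decomposition for the inclusion $\{pK+u:0\leq u<\bmod(V,p)\}\subseteq C$ is likewise sound, since $r\leq\bmod(V,p)\leq V=\sum v_i$ guarantees a within-block realization.
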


\end{document}